\tikzset{commutative diagrams/.cd,every label/.append style = {font = \normalsize}}
\numberwithin{equation}{section}
\newtheorem*{theorem*}{Theorem}
\newtheorem*{corollary*}{Corollary}
\newtheorem{theorem}[equation]{Theorem}
\newtheorem{corollary}[equation]{Corollary}
\newtheorem{lemma}[equation]{Lemma}
\newtheorem{proposition}[equation]{Proposition}
\newtheorem{conjecture}[equation]{Conjecture}
\theoremstyle{definition}
\newtheorem{definition}[equation]{Definition}
\newtheorem*{pf_no_qed}{Proof}
\newtheorem{eg_no_qed}[equation]{Example}
\newtheorem{remark}[equation]{Remark}
\theoremstyle{remark}
\newtheorem*{claimpf_no_qed}{Proof of Claim}
\font\pipefont=lcircle10
\def\elbow{\smash{\raise3pt\hbox{\pipefont\rlap{\rlap{\char'014}\char'016}}}}
\def\halfelbow{\smash{\raise2pt\hbox{\pipefont\rlap{\rlap{\rlap{\char'015}\phantom{\char'017}}}}}}
\def\cross{\smash{\lower5pt\hbox{\rlap{\vrule height16pt}}\raise3pt\hbox{\rlap{\hskip-8pt \vrule height0.4pt depth0pt width16pt}}}}
\def\M{\mathcal{M}}
\def\Q{\mathbf{Q}}
\def\T{\mathcal{T}}
\def\F{\mathcal{F}}
\def\tY{\widetilde{Y}}
\DeclareMathOperator{\NC}{NC}
\DeclareMathOperator{\tree}{tree}
\DeclareMathOperator{\forest}{forest}
\DeclareMathOperator{\DP}{D}
\DeclareMathOperator{\Verts}{\mathcal{V}}
\DeclareMathOperator{\Edges}{\mathcal{E}}
\DeclareMathOperator{\Trees}{Trees}
\DeclareMathOperator{\Gr}{Gr}
\newcommand{\Grk}{\Gr_{k,n}^{\ge 0}}
\newcommand{\R}{\mathbb{R}}
\newcommand{\Z}{\mathbb{Z}}
\newcommand{\QQ}{\mathbb{Q}}
\newcommand{\rf}[1]{\hyperref[#1]{(\ref*{#1})}}
\DeclareMathOperator{\Trop}{Trop}
\newcommand{\lrangle}[1]{\langle #1 \rangle}
\title[Grass trees and forests]
 {Grass(mannian) trees and forests: Variations of the exponential formula, with applications to the momentum amplituhedron}
\author{Robert Moerman}
\address{Department of Physics, Astronmy and Mathematics, University of Hertfordshire}
\email{\href{mailto:r.moerman@herts.ac.uk}{r.moerman@herts.ac.uk}}
\author{Lauren K.\ Williams}
\address{Department of Mathematics, Harvard University}
\email{\href{mailto:williams@math.harvard.edu}{williams@math.harvard.edu}}
\thanks{
LW was partially supported by NSF grants DMS-1854316 and DMS-1854512.}
\begin{document}
\begin{abstract}
The Exponential Formula allows one to enumerate any class of combinatorial objects built by choosing a set of connected components and placing a structure on each connected component which depends only on its size. There are multiple variants of this result, including Speicher's result for noncrossing partitions, as well as  analogues of the Exponential Formula for series-reduced planar trees and forests. In this paper we use these formulae to give generating functions for \emph{contracted Grassmannian trees} and \emph{forests}, certain graphs whose vertices are decorated with a \emph{helicity}.  Along the way we enumerate bipartite planar trees and forests, and we apply our results to enumerate various families of permutations: for example, bipartite planar trees are in bijection with separable permutations. 

It is postulated by Livia Ferro, Tomasz {\L}ukowski and Robert Moerman (2020) that contracted Grassmannian forests are in bijection with boundary strata of the momentum amplituhedron, an object encoding the tree-level S-matrix of maximally supersymmetric Yang-Mills theory. With this assumption, our results give a rank generating function for the boundary strata of the momentum amplituhedron, and imply that the Euler characteristic of the momentum amplituhedron is $1$.
\end{abstract}

\maketitle
\setcounter{tocdepth}{1}
\tableofcontents

\section{Introduction}

In recent years, scattering amplitudes research has motivated the study of \emph{amplituhedra}, which can be viewed as generalizations of polytopes into Grassmannians. There are two amplituhedra relevant to the physics of tree-level particle scattering in maximally supersymmetric Yang-Mills theory: the \emph{(tree) amplituhedron} $\mathcal{A}_{n,k,m}$, introduced by Arkani-Hamed--Trnka in \cite{Arkani-Hamed:2013jha}, and the \emph{momentum amplituhedron} $\mathcal{M}_{n,k,m}$, introduced for $m=4$ by Damgaard--Ferro--{\L}ukowski--Parisi in \cite{Damgaard:2019ztj} and later generalized to any even $m$ by {\L}ukowski--Parisi--Williams in \cite{Lukowski:2020dpn}. For $m=4$, both the amplituhedron and the momentum amplituhedron encode tree-level scattering amplitudes in this theory, but using different \emph{kinematic spaces}: the amplituhedron uses \emph{momentum twistor space} while the momentum amplituhedron uses \emph{momentum space}. Both objects are defined as the image of the \emph{totally nonnegative Grassmannian} $\Gr_{k,n}^{\ge 0}$ under a particular map, where $\Gr_{k,n}^{\ge 0}$ is the subset of the real Grassmannian where all Pl\"{u}cker coordinates are nonnegative, first studied by Postnikov and Lusztig \cite{Postnikov:2006kva,lusztig1994total}.

In addition to their physical significance when $m=4$, these amplituhedra are mathematically interesting objects   which have  been studied in various examples. For $k+m=n$, the amplituhedron is isomorphic to the totally nonnegative Grassmannian, whose rank generating function  was computed in \cite{williams2005enumeration}. When $k=1$, the amplituhedron is a cyclic polytope \cite{Arkani-Hamed:2013jha}, and when $m=1$, the amplituhedron is homeomorphic to the bounded complex of a cyclic hyperplane arrangement \cite{Karp:2016uax}, which also has an explicit rank generating function. When $m=2$, the amplituhedron's boundary strata were classified and enumerated in \cite{Lukowski:2019kqi}. Less is known about the momentum amplituhedron $\mathcal{M}_{n,k,m}$, but for $m=4$, a conjectural characterization of the boundary strata of $\mathcal{M}_{n,k,4}$ (also denoted by $\mathcal{M}_{n,k}$) was given in \cite{Ferro:2020lgp}.  The authors computed the rank generating functions for $\mathcal{M}_{n,k}$ for some small values of $k$ and $n$, and found in those cases that the Euler characteristic was $1$.

The goal of this paper is to enumerate the boundary strata of the $m=4$ momentum amplituhedron $\mathcal{M}_{n,k}$ (for any $k$ and $n$) according to their dimension. To do so, we begin by reformulating the speculative description of boundary strata from  \cite{Ferro:2020lgp} in terms of \emph{Grassmannian forests}, which are acyclic \emph{Grassmannian graphs}. (Grassmannian graphs first appeared implicitly in \cite{Arkani-Hamed:2012zlh} as a generalization of \emph{plabic graphs}, and were subsequently studied in \cite{Postnikov:2018jfq}.) Having described the boundary strata in terms of Grassmannian forests, we proceed to enumerate Grassmannian forests using two variations of the well-known \emph{Exponential Formula}. The first is Speicher's  analogue of the Exponential Formula for non-crossing partitions  \cite{speicher1994multiplicative}. The second variation is an analogue for series-reduced planar trees; it can be viewed naturally in the theory of \emph{species}, and can also be viewed as a combinatorial interpretation of Lagrange Inversion, see \cite{bergeron1998combinatorial,ardila2015algebraic}. Putting together these two variations leads to an analogue of the Exponential Formula for series-reduced planar forests, which we use to enumerate contracted Grassmannian forests according to \emph{helicity} and \emph{momentum amplituhedron dimension}.  Along the way, we also enumerate \emph{contracted plabic trees and forests}, which are in bijection with bipartite planar trees and forests. And we can translate all of our results into enumerative results about various class of permutations: for example, contracted plabic trees are in bijection with separable permutations.

The paper is structured as follows. In \cref{sec:variation}, we review the Lagrange Inversion formula and Speicher's noncrossing partition analogue of the Exponential Formula.  We then give series-reduced planar tree and forest analogues of the Exponential Formula. In \cref{sec:Grassmannian}, we introduce the totally nonnegative Grassmannian, as well as Grassmannian graphs, trees, and forests. We enumerate contracted plabic trees and forests as a warmup, keeping track of their \emph{helicity} $k$, number of boundary vertices $n$, and their \emph{momentum amplituhedron dimension}. We then enumerate contracted Grassmannian trees and forests according to the same statistics. In \cref{sec:perm} we give applications to enumeration of permutations. In \cref{sec:momentum} we define the momentum amplituhedron $\mathcal{M}_{n,k}$ and interpret our combinatorial results as enumerating the boundary strata of $\mathcal{M}_{n,k}$. By specializing $q=-1$ in our generating function, we also find that the Euler characteristic equals $1$. We emphasise that the results of \cref{sec:momentum} assume the characterization of momentum amplituhedron boundaries conjectured in \cite{Ferro:2020lgp}. We end the paper with an appendix providing a table of rank generating functions for the momentum amplituhedron $\mathcal{M}_{n,k}$ for various values of $n$ and $k$.


\section{Variations of the Exponential Formula}\label{sec:variation}

Many combinatorial objects can be built by choosing a set of connected components, then placing some structure on each connected component. In this case, if one understands how to enumerate the ways of placing a structure on each connected component of a given size, then the well-known \emph{Exponential Formula} (see e.g.\ \cite{stanley_ec2,bergeron1998combinatorial,ardila2015algebraic}) allows one to enumerate the combinatorial objects. There is an analogue of the Exponential Formula, due to Speicher \cite{speicher1994multiplicative}, for combinatorial objects which can be built by choosing a noncrossing partition and then placing a structure on each block of the noncrossing partition.  In this section we will provide background on Lagrange Inversion and Speicher's result, then give analogues of the Exponential Formula for series-reduced planar trees and forests.

\subsection{Lagrange Inversion}
The set $xK[[x]]$ of all formal power series $a_1 x+a_2 x^2+ \dots $ with zero constant term over a field $K$ forms a monoid under the operation of functional composition. The identity element of this monoid is the power series $x$.
\begin{definition}
	If $f(x) = a_1 x+ a_2 x^2+ \dots \in K[[x]]$, then we call a power series $g(x)$ a \emph{compositional inverse} of $f$ if $f(g(x)) = g(f(x))=x$, in which case we write $g(x) = f^{\lrangle{-1}}(x)$.  
\end{definition}

\begin{theorem} [Lagrange inversion formula, {\cite[Theorem 5.4.2]{stanley_ec2}}]\label{thm:LIF}
	Let $K$ be a field with $\text{char}\,K = 0$, and let $C(x)\in x K[[x]]$ with $[x]C(x)\ne0$. Then for positive integers $k, n$ we have
	$$n [x^n] C^{\lrangle{-1}}(x)^{k}=k[x^{n-k}]\left(\frac{x}{C(x)}\right)^n.$$
\end{theorem}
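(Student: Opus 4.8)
The plan is to run the classical formal-residue argument. For a formal Laurent series $h(x)=\sum_j h_j x^j\in K((x))$, write $\Res_x h := h_{-1}=[x^{-1}]h(x)$, so that $[x^n]F(x)=\Res_x\bigl(x^{-n-1}F(x)\bigr)$ for any $F\in K[[x]]$. Two elementary facts drive everything. First, $\Res_x\bigl(h'(x)\bigr)=0$ for every $h\in K((x))$, since $(x^j)'=jx^{j-1}$ never contributes to the coefficient of $x^{-1}$. Second, there is a change-of-variables rule: because $C(x)\in xK[[x]]$ with $[x]C(x)\neq0$, the substitution $x\mapsto C(t)$ induces a $K$-algebra isomorphism $K((x))\to K((t))$, and
\[
\Res_x\bigl(f(x)\bigr)=\Res_t\bigl(f(C(t))\,C'(t)\bigr)\qquad\text{for every } f\in K((x)).
\]

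First I would rewrite the left-hand side as a residue. Writing $g:=C^{\lrangle{-1}}$, which again lies in $xK[[x]]$ with nonzero linear coefficient, we have
\[
n\,[x^n]\,g(x)^k = n\,\Res_x\!\left(\frac{g(x)^k}{x^{n+1}}\right).
\]
Now apply the change of variables $x=C(t)$ to $f(x)=g(x)^k x^{-n-1}$. Since $g(C(t))=t$ by definition of the compositional inverse, the transformed integrand is $t^k C(t)^{-n-1}$, and multiplying by $C'(t)$ gives
\[
n\,\Res_x\!\left(\frac{g(x)^k}{x^{n+1}}\right) = n\,\Res_t\!\left(\frac{t^k\,C'(t)}{C(t)^{n+1}}\right).
\]

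The last step absorbs the factor $n$ via the vanishing of the residue of a derivative. Differentiating the Laurent series $t^k C(t)^{-n}$ yields
\[
\frac{d}{dt}\!\left(\frac{t^k}{C(t)^n}\right) = \frac{k\,t^{k-1}}{C(t)^n}-\frac{n\,t^k\,C'(t)}{C(t)^{n+1}},
\]
so applying $\Res_t$ and discarding the left-hand side gives
\[
n\,\Res_t\!\left(\frac{t^k\,C'(t)}{C(t)^{n+1}}\right) = k\,\Res_t\!\left(\frac{t^{k-1}}{C(t)^n}\right) = k\,[t^{-k}]\,C(t)^{-n}.
\]
Finally, renaming the dummy variable, $[t^{-k}]C(t)^{-n}=[x^{-k}]C(x)^{-n}=[x^{n-k}]\bigl(x/C(x)\bigr)^n$, and chaining the three displays produces the claimed identity.

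The main obstacle is making the change-of-variables rule rigorous in the Laurent (rather than power-series) setting: one must check that $f(C(t))$ is a well-defined element of $K((t))$ for $f\in K((x))$ — which needs exactly that $C$ vanish to order one at the origin — and then verify the identity $\Res_x f=\Res_t\bigl(f(C(t))C'(t)\bigr)$. The cleanest route is to establish it first on the topological spanning set $\{x^m:m\in\Z\}$: for $m\neq-1$ both sides vanish, since the right-hand integrand equals $\tfrac{1}{m+1}\bigl(C^{m+1}\bigr)'$, a derivative; and for $m=-1$ a direct computation using $C(t)=c_1t(1+\cdots)$ gives $\Res_t(C'/C)=1$. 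Linearity together with a valuation/continuity argument then extends the rule to all of $K((x))$. The hypothesis $\ch K=0$ is used only to divide by $m+1$ in this verification and by $n$ at the end.
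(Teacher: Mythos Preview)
Your argument is correct and is the classical formal-residue proof of Lagrange inversion. Note, however, that the paper does not supply its own proof of this theorem: it is quoted verbatim from \cite[Theorem 5.4.2]{stanley_ec2} and used as a black box. What you have written is essentially the proof Stanley gives in that reference (the residue calculus on $K((x))$, the substitution $x\mapsto C(t)$, and the vanishing of $\Res_t$ on exact derivatives), so there is nothing to compare against within the paper itself.
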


\subsection{Speicher's noncrossing partition analogue of the Exponential Formula}

\begin{definition}
	A \emph{noncrossing partition} of $[n]$ is a partition  $\pi$ of the set $[n]=\{1,2,\dots,n\}$ into blocks $B_1,\dots, B_{\ell}$ satisfying the following condition: if $a<b<c<d$ and $B$ and $B'$ are blocks of $\pi$ such that $a,c\in B$ and $b,d\in B'$, then $B=B'$.
\end{definition}

Equivalently, $\pi$ is a noncrossing partition if after drawing the numbers $1,2,\dots,n$ in order around a circle, and replacing each block $B$ with the convex hull of the corresponding points, the resulting polygons do not overlap. See \cref{fig:nc9}.

\begin{figure}[h]
	\centering
	\includegraphics[scale=0.4]{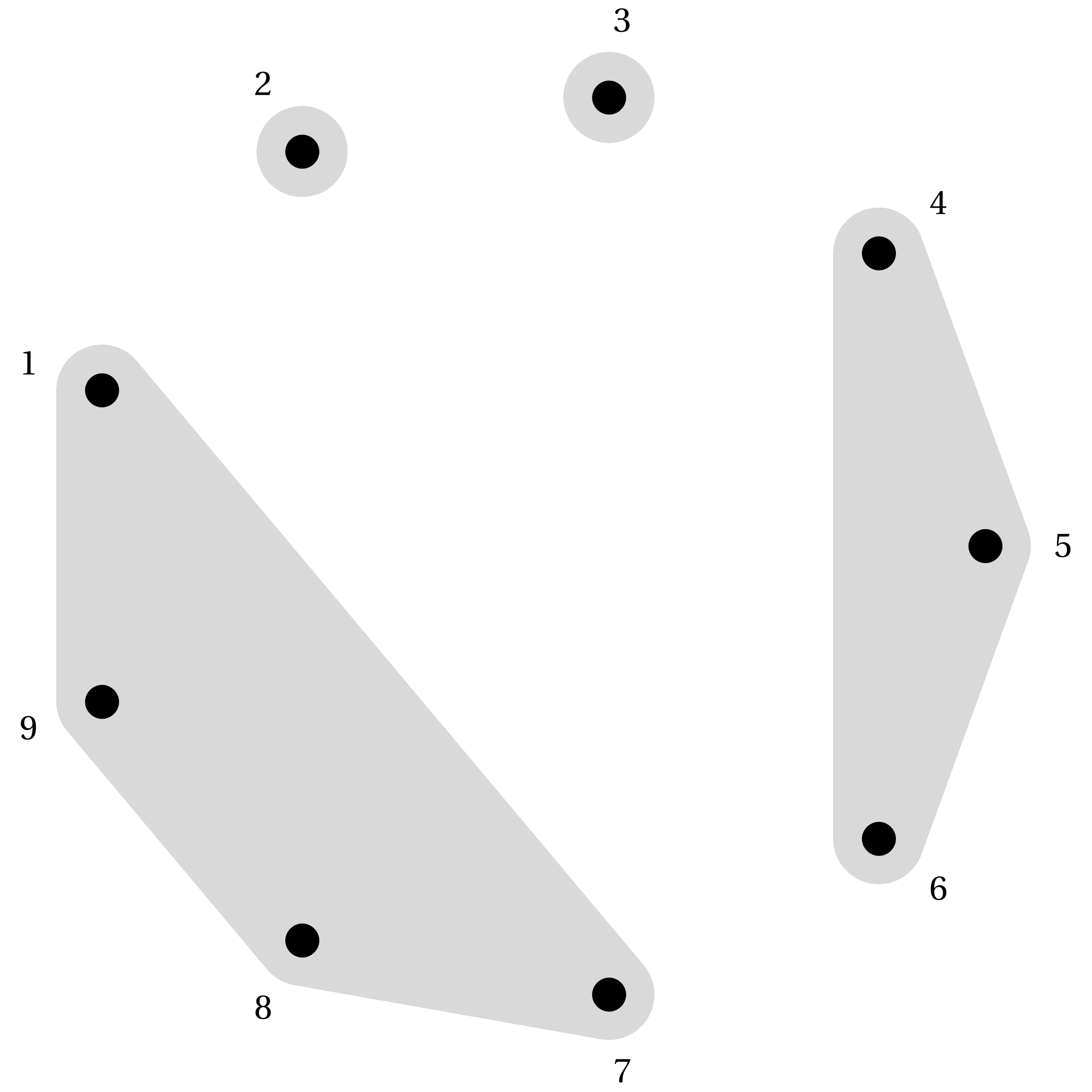}
	\caption{The non-crossing partition $\{\{1,7,8,9\},\{2\},\{3\},\{4,5,6\}\}$ of $[9]$ with each block drawn as a convex hull of the corresponding points.}
	\label{fig:nc9}
\end{figure}

Let $\NC_n$ denote the lattice of noncrossing partitions of $[n]$. The following result says that if a class $\mathcal{H}_{\NC}$ of combinatorial objects is built by choosing a noncrossing partition and putting a structure on each block independently (encoded by the function $f$), then the generating function $H_{\NC}(x)$ for $\mathcal{H}_{\NC}$ can be obtained from the generating function $F(x)$ for $f$.

\begin{theorem} [{\cite{speicher1994multiplicative}, see also \cite[Exercise 5.35b]{stanley_ec2}}] 
	\label{thm:speicher}
	Let $K$ be a field. Given a function $f:\Z^+ \to K$, define	a new function $h:\Z^+ \to K$ by 
	$$h(n)=\sum_{\pi = \{B_1,\dots,B_\ell\}\in \NC_n} f(\#B_1) f(\#B_2)\dots f(\# B_\ell),$$
	where $\#B_i$ denotes the cardinality of block $B_i$. Let $F(x)=1+\sum_{n\geq 1} f(n)x^n$ and $H_{\NC}(x) = 1+\sum_{n \geq 1} h(n)x^n$. Then 
	$$x H_{\NC}(x) = {\left( \frac{x}{F(x)} \right)}^{\lrangle{-1}}.$$	
\end{theorem}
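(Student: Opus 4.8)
The plan is to establish the result by combining Speicher's multiplicative convolution for noncrossing partitions with the Lagrange Inversion Formula (\cref{thm:LIF}), working at the level of formal power series. First I would extract from the definition of $h$ a functional equation relating $H_{\NC}(x)$ and $F(x)$. The key structural fact about noncrossing partitions is that they decompose according to the block containing the element $1$ (or equivalently via the recursive ``first return'' decomposition): once one fixes the block $B$ containing a distinguished point, the remaining points split into intervals lying between consecutive elements of $B$, and each such interval must again be partitioned noncrossingly and independently. This gives a recursive convolution identity for the numbers $h(n)$, which translates into an algebraic relation between the generating functions $H_{\NC}$ and $F$.

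The second step is to convert that recursive decomposition into a clean equation. I expect the decomposition to yield an identity of the form $F(x) = $ (some explicit expression in $x$ and $H_{\NC}(x)$), or equivalently a fixed-point equation for the composition $x/F(x)$. Concretely, writing $C(x) := x H_{\NC}(x)$, the aim is to show that $C$ satisfies $C(x)\,F(C(x)) = x$ when suitably interpreted, so that $C$ is precisely the compositional inverse of $x/F(x)$. The combinatorial content is that summing $f(\#B_1)\cdots f(\#B_\ell)$ over all noncrossing partitions is the same as iterating a single-block structure governed by $F$ along a noncrossing tree-like skeleton, and this iteration is exactly what functional inversion of $x/F(x)$ encodes.

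The main obstacle, and the step requiring the most care, is verifying that the generating-function manipulation is valid in $xK[[x]]$ and that $x/F(x)$ actually lies in the domain where compositional inverse is defined. Since $F(x) = 1 + \sum_{n\ge 1} f(n)x^n$ has constant term $1$, the series $x/F(x)$ has zero constant term and nonzero linear coefficient (namely $[x]\bigl(x/F(x)\bigr) = 1 \neq 0$), so it is invertible under composition by the discussion preceding \cref{thm:LIF}; I would record this explicitly. One then checks that the equation $xH_{\NC}(x) = (x/F(x))^{\lrangle{-1}}$ holds by confirming agreement of all coefficients, which can be done either directly from the recursive block decomposition above or by appealing to \cref{thm:LIF} to read off $[x^n]\,C^{\lrangle{-1}}(x)$ and matching it against the closed form for $h(n)$ obtained from the noncrossing recursion. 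The subtle point is ensuring the noncrossing recursion is set up with the correct distinguished-point bookkeeping so that each block's contribution is counted exactly once; this is where an incorrect decomposition would silently produce the wrong (e.g.\ ordinary exponential) formula rather than the noncrossing one.
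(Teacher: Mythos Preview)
The paper does not supply its own proof of this theorem: it is quoted as a known result with citations to Speicher and to Stanley's \emph{Enumerative Combinatorics}, and no proof or proof sketch appears. So there is nothing in the paper to compare your argument against; your approach via the ``block containing the distinguished element'' decomposition is in fact the standard one (and is essentially what Stanley's solution to Exercise~5.35 does).

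That said, there is a genuine slip in your write-up. You set $C(x) := x H_{\NC}(x)$ and then say the aim is to show $C(x)\,F(C(x)) = x$. This is the wrong functional equation. Running the decomposition you describe (fix the block $B$ containing $1$, of size $j$; the remaining elements fall into $j$ cyclic gaps, each carrying an independent noncrossing partition) gives
\[
H_{\NC}(x) \;=\; 1 + \sum_{j\ge 1} f(j)\,x^{j}\,H_{\NC}(x)^{j} \;=\; F\bigl(x H_{\NC}(x)\bigr) \;=\; F\bigl(C(x)\bigr),
\]
hence $C(x) = x\,F(C(x))$, i.e.\ $C(x)/F(C(x)) = x$. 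That is the equation asserting $C = (x/F(x))^{\lrangle{-1}}$. Your stated relation $C(x)\,F(C(x)) = x$ would instead identify $C$ with the compositional inverse of $x\,F(x)$, which is a different series. Once you correct this, the rest of your outline (checking $x/F(x) \in xK[[x]]$ with nonzero linear term, so the inverse exists; then reading off the identity) goes through without difficulty, and the appeal to \cref{thm:LIF} is not actually needed.
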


\subsection{Series-reduced planar tree and  forest analogues of the Exponential Formula}
In this section we give \cref{thm:tree} and \cref{thm:forest} which can be viewed as analogues of Speicher's Theorem, but with series-reduced planar trees and planar forests replacing noncrossing partitions.  We note that the result on series-reduced planar trees can be viewed as an analogue of Speicher's Theorem for polygon dissections.

\begin{definition}
	A \emph{planar tree} $T$ (respectively, a \emph{planar forest} $F$) on $n$ leaves is a tree (resp., a forest) properly embedded in a disk with $n$ \emph{boundary vertices} (i.e.\ vertices of degree $1$) on the boundary of the disk (labelled in clockwise order). Let $\Verts_\text{int}(T)$ and $\Verts_\text{int}(F)$ denote the set of internal vertices (i.e.\ vertices with degree at least $2$) of $T$ and $F$.
	
	A planar tree or forest is called \emph{series-reduced} if it has no internal vertices of degree $2$. Let $\T_n$ (resp., $\F_n$) denote the set of series-reduced planar trees (resp., forests) on $n$ leaves. (The requirement on internal vertices implies that $\T_n$ and $\F_n$ are finite.) A series-reduced planar tree $T$ on $n$ leaves is said to be of \emph{type} $\bm{r}=(r_3,\ldots,r_n)$ if it has $r_i$ internal vertices of degree $i$. An example of a series-reduced planar tree on $9$ leaves of type $(2,1,1,0,0,0,0)$ is given in \cref{fig:tree-9} (middle). Let $\T_n(r_3,\ldots,r_n)$ denote the subset of $\T_n$ of type $(r_3,\ldots,r_n)$.
\end{definition}

\begin{definition}
	A \emph{plane tree} on $n-1$ leaves is a rooted tree $\tilde{T}$ with $n-1$ \emph{boundary vertices} (i.e.\ vertices with no descendants) where each internal vertex of $\tilde{T}$ has at least $1$ descendant.
	
	A \emph{Schr\"{o}der tree} is a plane tree which is \emph{series-reduced}, i.e.\ each internal vertex has at least two descendants. A Schr\"{o}der tree $\tilde{T}$ on $n-1$ leaves is said to be of \emph{type} $(r_3,\ldots,r_n)$ if it has $r_i$ internal vertices with $i-1$ descendants. An example of a Schr\"{o}der tree on $8$ leaves of \emph{type} $(2,1,1,0,0,0,0)$ is given in \cref{fig:tree-9} (left). Let $\tilde{\T}_{n-1}$ denote the set of Schr\"{o}der trees on $n-1$ leaves, and let $\tilde{\T}_{n-1}(r_3,\ldots,r_n)$ denote the subset of $\tilde{\T}_{n-1}$ of type $(r_3,\ldots,r_n)$.
\end{definition}

\begin{remark}\label{remark:tree-bijection}
	Given a series-reduced planar tree $T\in \T_n$, let $v_\ast$ be the internal vertex incident to the leaf labelled $1$. If we remove leaf $1$ and its incident edge, then the remaining tree $\tilde{T}$ can be thought of as a Schr\"{o}der tree with root vertex $v_\ast$. This map gives a bijection between the series-reduced planar trees $T \in \T_n(r_3,\ldots,r_n)$ and Schr\"{o}der trees $\tilde{T} \in \tilde{\T}_{n-1}(r_3,\ldots,r_n)$. Note that an internal vertex of degree $d$ in a series-reduced planar tree corresponds to an internal vertex with $d-1$ descendants in a Schr\"{o}der tree.
\end{remark}

\begin{lemma}\label{thm:tree-type-count}
	Let $t_n(r_3,\dots,r_n) =|\T_n(r_3,\ldots,r_n)|$ be the number of series-reduced planar trees of type $\bm{r}=(r_3,\ldots,r_n)$ and let $|\bm{r}| = \sum_i r_i$. Then $$t_n(r_3,\ldots,r_n) = \frac{(n+|\bm{r}|-2)!}{(n-1)! r_3! \cdots r_n!}.$$
\end{lemma}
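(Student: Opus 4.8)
The plan is to use the bijection of \cref{remark:tree-bijection} to trade series-reduced planar trees for rooted objects (Schr\"{o}der trees), write down the algebraic generating function these satisfy, and then read off the desired coefficient with the Lagrange inversion formula \cref{thm:LIF}. By \cref{remark:tree-bijection}, $t_n(r_3,\ldots,r_n)$ equals the number of Schr\"{o}der trees on $n-1$ leaves having $r_i$ internal vertices with $i-1$ descendants. Introducing commuting indeterminates $z_3, z_4, \ldots$, I would let $B = B(x)$ be the generating function in which each such tree contributes $x^{\#\text{leaves}}\prod_i z_i^{\#\{\text{vertices with }i-1\text{ descendants}\}}$. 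Decomposing a tree at its root (either a single leaf, or an internal vertex with $i-1 \geq 2$ subtrees) gives
$$B = x + \sum_{i\geq 3} z_i B^{i-1},$$
and then $t_n(r_3,\ldots,r_n) = [x^{n-1}\textstyle\prod_i z_i^{r_i}]\,B$, where coefficients are taken in $K = \mathbb{Q}(z_3,\ldots,z_n)$ (only finitely many $z_i$ can contribute to a coefficient of bounded $x$-degree).

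Next I would rewrite the functional equation as $x = C(B)$ with $C(x) = x - \sum_{i\geq 3} z_i x^{i-1}$, which exhibits $B = C^{\lrangle{-1}}$; since $C \in xK[[x]]$ with $[x]C(x) = 1 \neq 0$, \cref{thm:LIF} applies. Taking $k=1$ and replacing $n$ by $n-1$ yields
$$(n-1)\,[x^{n-1}]\,B = [x^{n-2}]\left(\frac{x}{C(x)}\right)^{n-1} = [x^{n-2}]\Big(1 - \textstyle\sum_{i\geq 3} z_i x^{i-2}\Big)^{-(n-1)}.$$
Expanding by the binomial series and extracting the coefficient of $\prod_i z_i^{r_i}$ forces the summation index to equal $|\bm{r}|$ and the accompanying power of $x$ to equal $\sum_i r_i(i-2)$. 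The edge/leaf-count identity for a realizable type, $\sum_i r_i(i-2) = n-2$, guarantees this power of $x$ is exactly $n-2$, so the extraction is consistent rather than vacuous. The surviving contribution is $\binom{n+|\bm{r}|-2}{|\bm{r}|}$ times the multinomial coefficient $|\bm{r}|!/(r_3!\cdots r_n!)$; dividing by $n-1$ and simplifying produces $\dfrac{(n+|\bm{r}|-2)!}{(n-1)!\,r_3!\cdots r_n!}$, as claimed.

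The bulk of this is routine binomial bookkeeping; the only genuine content is choosing the generating function and recognizing $B$ as a compositional inverse so that \cref{thm:LIF} applies. I expect the one real subtlety to be the coefficient extraction: one must verify that the $z$-grading pins down the binomial-series index uniquely and that the induced $x$-degree coincides with $n-2$ (equivalently, that the type $\bm{r}$ is realizable), so that no separate divisibility or support condition intervenes. As a generating-function-free alternative, I could instead encode each Schr\"{o}der tree by its preorder sequence of out-degrees, a {\L}ukasiewicz word with $n-1$ zeros and $r_i$ letters equal to $i-1$; since these out-degrees sum to $N-1$ with $N = n-1+|\bm{r}|$ the total number of vertices, the cycle lemma shows that exactly one of the $N$ cyclic rotations of each rearrangement of this multiset is a valid word (the sum condition forces aperiodicity, so all $N$ rotations are distinct), giving the count $\tfrac{1}{N}\binom{N}{n-1,\,r_3,\ldots,r_n}$, which agrees with the stated formula.
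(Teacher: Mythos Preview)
Your argument is correct. The paper's own proof is a one-line citation: it invokes the bijection of \cref{remark:tree-bijection} and then appeals to \cite[Theorem~5.3.10]{stanley_ec2} for the enumeration of plane trees by degree sequence. You instead supply a self-contained derivation using only the Lagrange inversion formula already stated in the paper (\cref{thm:LIF}), and you also sketch the standard cycle-lemma alternative. Both of your routes are essentially the classical proofs of Stanley's Theorem~5.3.10, so the mathematical content is the same as what the paper cites; the difference is that you have unpacked the reference rather than invoked it. This has the mild advantage of keeping the argument internal to the paper's toolkit and meshes naturally with the functional-equation proof the paper gives for \cref{thm:tree}. Your handling of the one genuine checkpoint---that the constraint $\sum_i r_i(i-2)=n-2$ holds for any realizable type, so the $x$-degree matches and the extraction is nonvacuous---is correct, as is your aperiodicity remark for the cycle-lemma variant.
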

\begin{proof}
	From \cref{remark:tree-bijection}, series-reduced planar trees of type $(r_3,\ldots,r_n)$ are in bijection with the plane trees of type $(n-1, 0, r_3,\ldots,r_n)$ considered in \cite[Section 5.3]{stanley_ec2}. The formula in the lemma now follows from \cite[Theorem 5.3.10]{stanley_ec2}.
\end{proof}

\begin{figure}[h]
	\centering
	\includegraphics[width=0.3\textwidth]{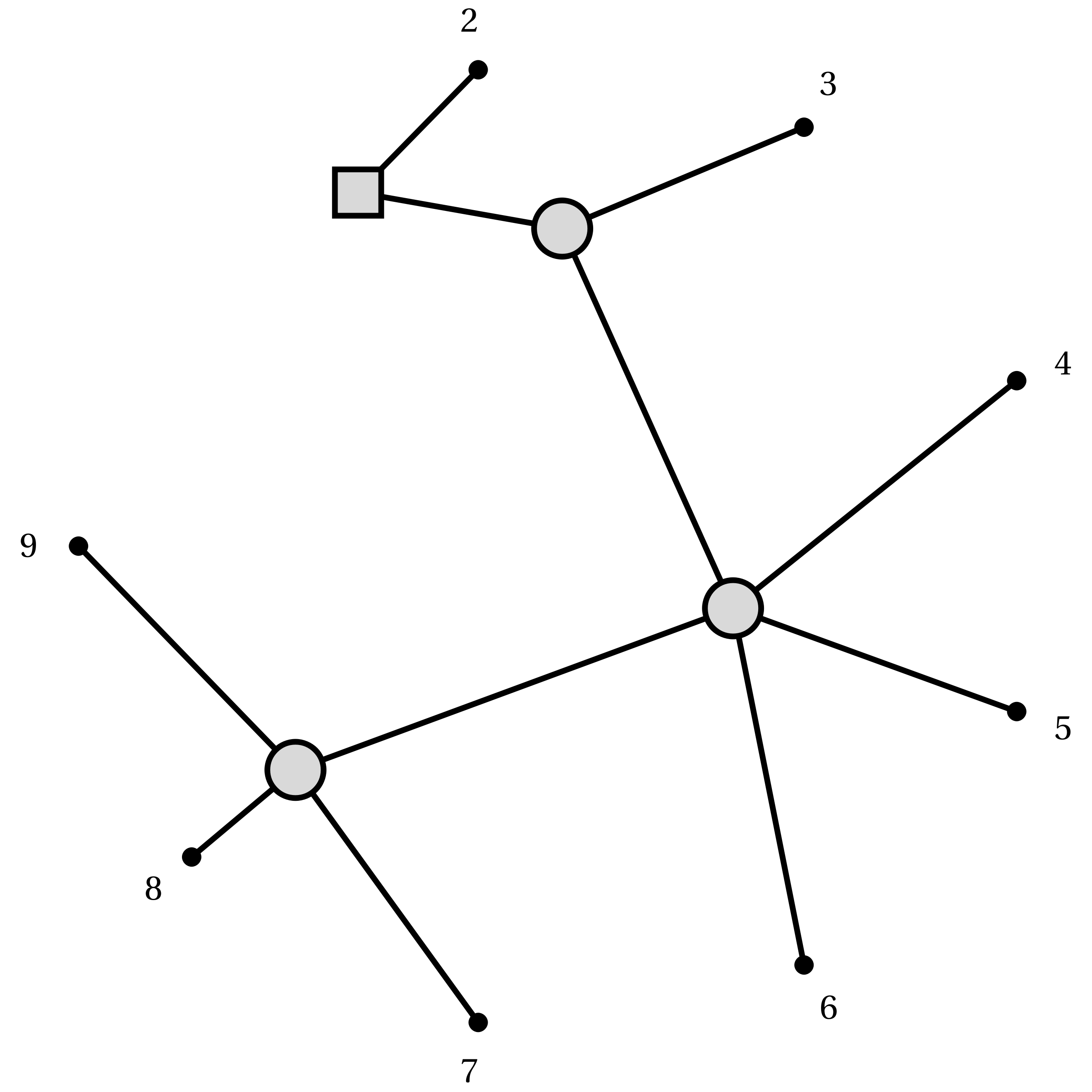}
	\includegraphics[width=0.3\textwidth]{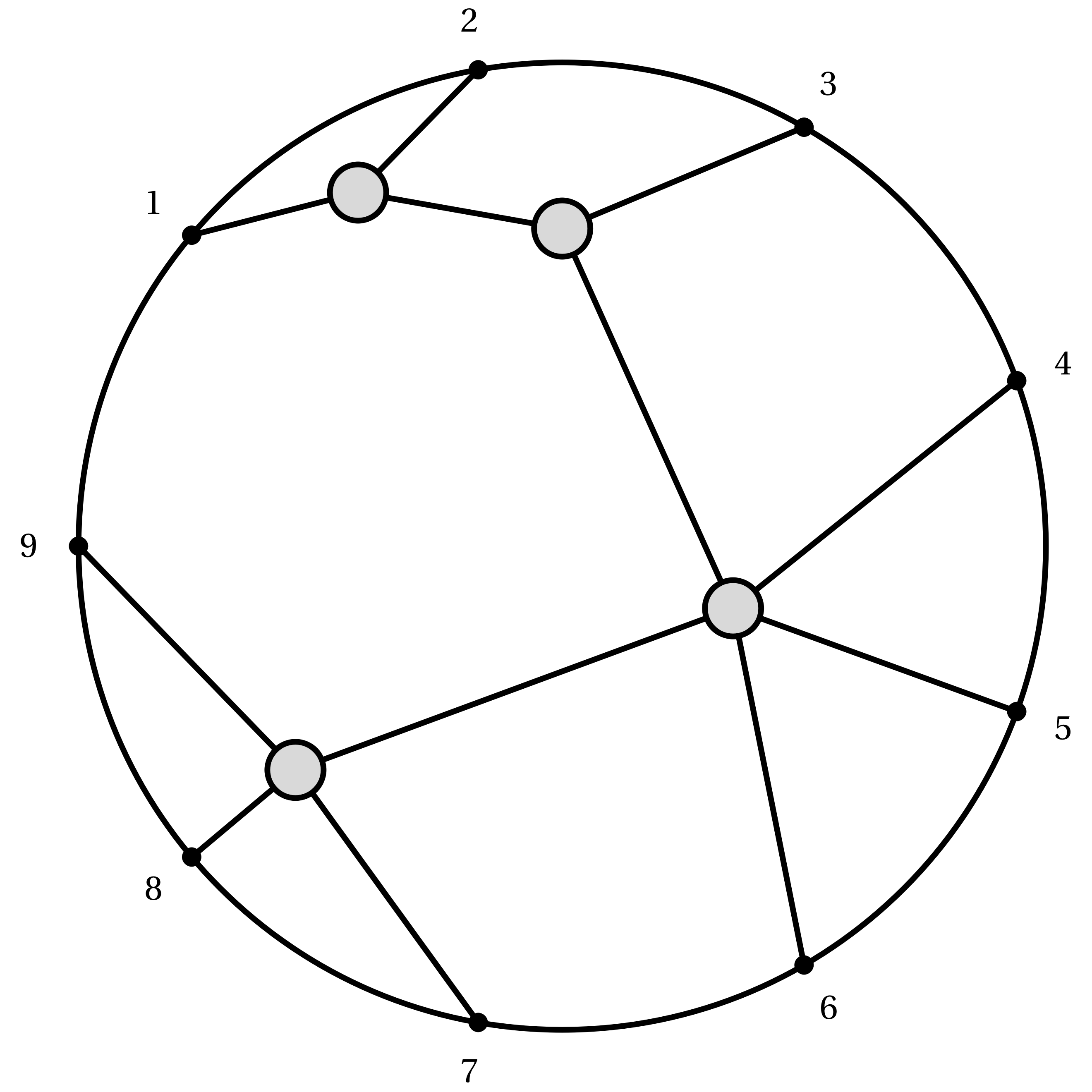}
	\includegraphics[width=0.3\textwidth]{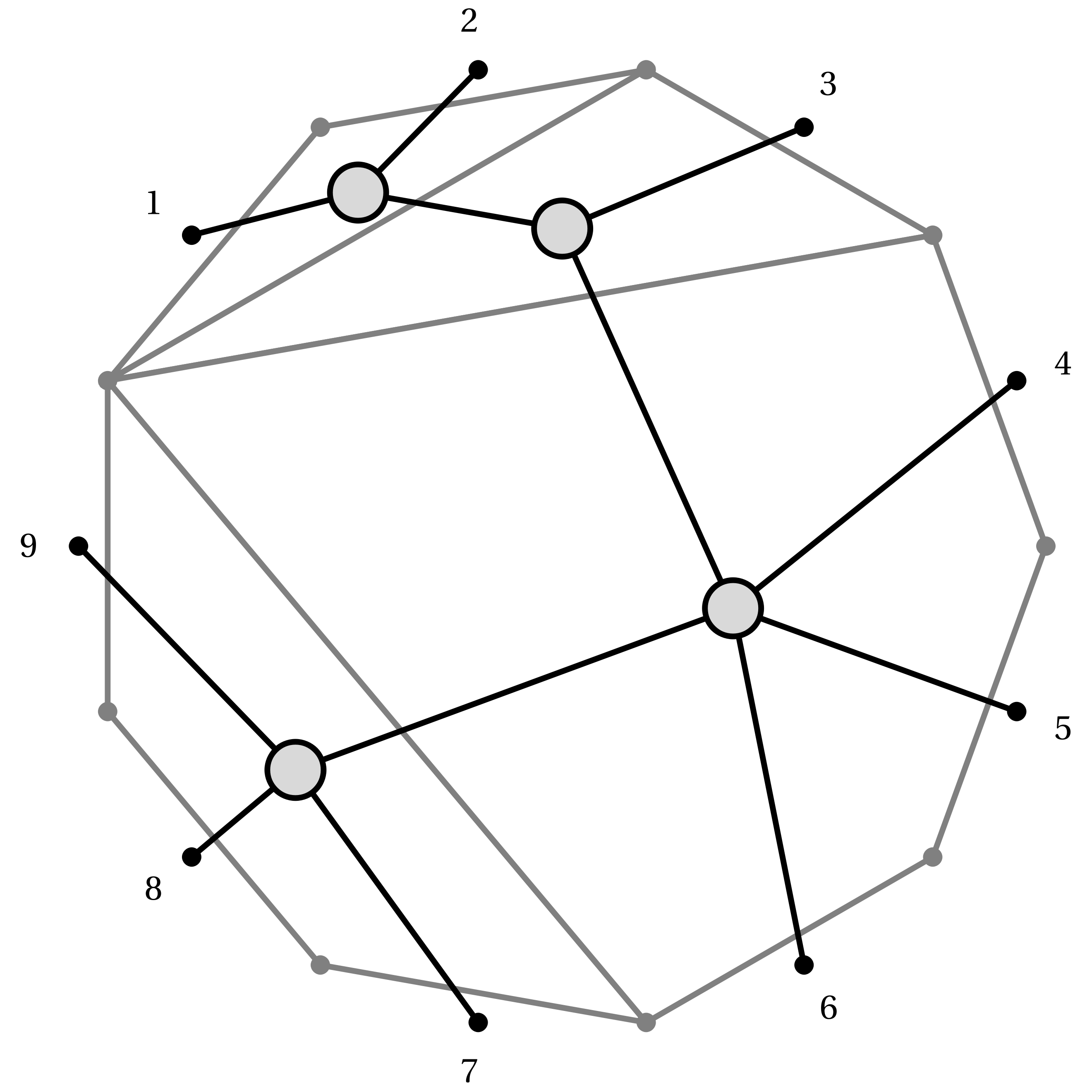}
	
	\caption{A series-reduced planar tree on $9$ leaves of type $(2,1,1,0,0,0,0,0)$ (middle), its corresponding Schr\"{o}der tree on $8$ leaves with the root vertex depicted as a square (left), and its dual polygon dissection of $\Q_9$ (right).}
	\label{fig:tree-9}
\end{figure}

The following result says that if a class $\mathcal{H}_{\tree}$ of combinatorial objects is built by choosing a series-reduced planar tree in $\T_n$ and putting a structure on each internal vertex independently (depending only on its degree and encoded by the function $f$), then the generating function $H_{\tree}(x)$ for $\mathcal{H}_{\tree}$  can be obtained from the generating function $F(x)$ for the function $f$. We note that \cref{thm:tree} has appeared in various references: for example, it fits naturally into the theory of \emph{species} and is closely related to \cite[page 168, Equation (18)]{bergeron1998combinatorial}; it can also be reformulated using Schr\"{o}der trees and viewed as a combinatorial interpretation of Lagrange Inversion, see \cite[Theorem 2.2.1]{ardila2015algebraic}. It is similar in spirit to \cite[Proposition 5]{Morales}.

\begin{theorem}  
	\label{thm:tree}
	Let $K$ be a field.  Given a function $f:\Z_{\ge 3} \to K$, define a new function $h:\Z_{\ge 3} \to K$ by 
	\begin{equation} \label{eq:tree-h}
		h(n)=\sum_{T \in \T_n} \prod_{v\in \Verts_\text{int}(T)} f(\deg(v))\,.
	\end{equation}
	Let $F(x)=\sum_{n\geq 3} f(n)x^n$ and $H_{\tree}(x) = x^2+ \sum_{n \geq 3} h(n)x^n$. Then
	\begin{equation} 
		\label{eq:tree-H}
		\frac{1}{x}H_{\tree}(x) = {\left(x-\frac{1}{x}F(x) \right)}^{\lrangle{-1}}.
	\end{equation}
\end{theorem}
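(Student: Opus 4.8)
The plan is to transport the sum defining $h(n)$ across the bijection of \cref{remark:tree-bijection} to Schr\"oder trees, read off a functional equation for the series $\tfrac1x H_{\tree}(x)$ directly from the recursive structure of Schr\"oder trees, and then recognize that functional equation as the assertion that $\tfrac1x H_{\tree}(x)$ is the compositional inverse of $x-\tfrac1x F(x)$.

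First I would rewrite $h(n)$ as a weighted count of Schr\"oder trees. By \cref{remark:tree-bijection}, $\T_n$ is in bijection with $\tilde{\T}_{n-1}$, and an internal vertex of degree $d$ in $T\in\T_n$ corresponds to an internal vertex with $d-1$ descendants in the associated Schr\"oder tree. Writing $c_v$ for the number of descendants of an internal vertex $v$, this gives
\[ h(n) = \sum_{\tilde{T}\in\tilde{\T}_{n-1}}\ \prod_{v}\, f(c_v+1). \]
I would then set $S(x):=\tfrac1x H_{\tree}(x) = x + \sum_{n\ge 3} h(n)\,x^{n-1}$ and interpret $S(x)$ as the generating function for Schr\"oder trees in which each leaf carries weight $x$ and each internal vertex $v$ carries weight $f(c_v+1)$. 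Here the leading term $x$ records the single-leaf tree, which corresponds to the planar tree on two leaves and hence to the $x^2$ term of $H_{\tree}(x)$; for $\ell=n-1\ge 2$ the coefficient of $x^{\ell}$ in $S$ is exactly $h(n)$.

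Next I would use the recursive decomposition of a Schr\"oder tree: it is either a single leaf, or it consists of a root together with an ordered sequence of $c\ge 2$ subtrees, each itself a Schr\"oder tree, with the root contributing weight $f(c+1)$. Translating this into generating functions yields
\[ S(x) = x + \sum_{c\ge 2} f(c+1)\,S(x)^{c} = x + \sum_{m\ge 3} f(m)\,S(x)^{m-1} = x + \frac{F(S(x))}{S(x)}, \]
where I substituted $m=c+1$ and used $\sum_{m\ge 3} f(m)\,S^{m-1} = \tfrac1S\sum_{m\ge 3} f(m)\,S^{m} = \tfrac{F(S)}{S}$ as an identity of formal power series, which is legitimate because $S(x)=x+O(x^2)$ has positive order. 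Rearranging gives $S(x) - \tfrac{1}{S(x)}F(S(x)) = x$, that is, $\Phi(S(x))=x$ with $\Phi(y):=y-\tfrac1y F(y)$.

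Finally I would verify that $\Phi$ is compositionally invertible and conclude. Since $F(y)=\sum_{m\ge 3} f(m)y^{m}$, we have $\Phi(y)=y-\sum_{m\ge 3} f(m)y^{m-1}=y-f(3)y^2-\cdots$, so $\Phi\in yK[[y]]$ with $[y]\Phi=1\ne 0$ and $\Phi^{\langle -1\rangle}$ exists. As compositional inverses are unique and $S\in xK[[x]]$ with $[x]S=1$, the relation $\Phi(S(x))=x$ identifies $S$ with $\Phi^{\langle -1\rangle}$, which is precisely the claim $\tfrac1x H_{\tree}(x)=\big(x-\tfrac1x F(x)\big)^{\langle -1\rangle}$. (One could alternatively extract coefficients from this functional equation via \cref{thm:LIF} and match them against \cref{thm:tree-type-count}, but the functional-equation argument is cleaner and is itself the combinatorial incarnation of Lagrange inversion.) I expect the only delicate point to be the bookkeeping at the bottom of the recursion: correctly matching the single-leaf Schr\"oder tree with the $x^2$ term of $H_{\tree}$ and faithfully tracking the degree-versus-descendant shift $d\leftrightarrow d-1$, so that the root weight reads $f(c+1)$ and the sum over $c\ge 2$ aligns with $F(x)=\sum_{n\ge 3} f(n)x^n$; an off-by-one slip there would corrupt the functional equation.
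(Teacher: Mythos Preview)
Your proposal is correct and follows essentially the same route as the paper's proof: both pass to Schr\"oder trees via \cref{remark:tree-bijection}, set $S(x)=\tfrac1x H_{\tree}(x)$, derive the functional equation $S(x)=x+\sum_{m\ge 3} f(m)\,S(x)^{m-1}$ from the recursive decomposition at the root, and then recognize this as $S=(x-\tfrac1x F(x))^{\langle -1\rangle}$. Your write-up is somewhat more explicit about the degree-versus-descendant bookkeeping and about the invertibility of $\Phi$, but the argument is the same.
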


\begin{remark}
	\label{Htree}
	The appearance of the term $x^2$ in $H_{\tree}(x)$  reflects the fact that there is a unique series-reduced planar tree  on $2$ leaves.	If we want to account for the unique tree in $\T_1$ with $1$ leaf and $1$ internal vertex, we can consider $\hat{H}_{\tree}(x) = h(1)x +  H_{\tree}(x)$, where $h(1)=f(1)$ is the number of structures that we can put on an internal vertex of degree $1$.	
\end{remark}

\cref{thm:tree} can be proved in several ways.
\begin{enumerate}
	\item Using \cref{thm:tree-type-count}, one can give an argument analogous to the proof of Speicher's Theorem as presented in \cite[Exercise 5.35b]{stanley_ec2}. We leave this as an exercise. 
	\item One can reformulate this result as a \emph{functional equation} describing the recursive  structure of trees. This can be verified directly but also fits naturally into Joyal's framework of \emph{species} as in \cite{bergeron1998combinatorial}. We thank Ira Gessel and Francois Bergeron for their comments on this approach; see also \cite[Theorem 2.2.1]{ardila2015algebraic}). For completeness, we include this second proof below.
\end{enumerate}

\begin{proof}[Proof of \cref{thm:tree}]	
	Let $\tilde{F}(x)=\frac 1x F(x)=\sum_{n\ge3} f(n)x^{n-1}$ and $\tilde{H}(x)=\frac 1x H(x)=x+\sum_{n\ge3} h(n)x^{n-1}$. That is, the coefficient of $x^{n-1}$ in $\tilde{F}(x)$ gives the number of ways of decorating an internal vertex $v$ in a Schr\"{o}der tree where $v$ has $n-1$ descendants, and the coefficient of $x^{n-1}$ in $\tilde{H}(x)$ enumerates the decorated Schr\"{o}der trees on $n-1$ leaves.
	
	Schr\"{o}der trees can be built by choosing a root vertex $v_\ast$ with degree $d-1$ (where $d\ge 3$) and placing Schr\"{o}der trees $\tilde{T}_1,\ldots,\tilde{T}_{d-1}$ at each of the $d-1$ vertices below $v_\ast$. Thus, it immediately follows that
	\begin{align*}
		\tilde{H}(x) = x + \sum_{n\ge3} f(n)\tilde{H}(x)^{n-1} = x + \tilde{F} (\tilde{H}(x))\,, 
	\end{align*}
	which is equivalent to $\tilde{H}(x) = (x-\tilde{F}(x))^{\lrangle{-1}}$ or $\frac 1xH(x) = (x-\frac 1xF(x))^{\lrangle{-1}}$.
\end{proof}

We now combine \cref{thm:tree} and \cref{thm:speicher} to obtain \cref{thm:forest} below. As before, we interpret this result as saying that if a class $\mathcal{H}_{\forest}$ of combinatorial objects is built by choosing a series-reduced planar forest in $\F_n$ and putting a structure on each internal vertex independently (depending only on its degree and encoded by the function $g$), then the generating function $H_{\forest}(x)$ for $\mathcal{H}_{\forest}$ can be obtained from the generating function $G(x)$ for the function $g$. 

\begin{corollary}\label{thm:forest}
	Let $K$ be a field.  Given a function $g:\Z^+ \to K$, define a new function $h:\Z^+ \to K$ by 
	\begin{equation} \label{eq:forest-h}
		h(n)=\sum_{F \in \F_n} \prod_{v\in \Verts_\text{int}(F)} g(\deg(v)).
	\end{equation}
	(Note that $h(1)=g(1)$, and that $g(2)$ is irrelevant for series-rooted forests.)
	
	Let $G(x)=\sum_{n\geq 3} g(n)x^n$ and $H_{\forest}(x) = 1+\sum_{n \geq 1} h(n)x^n$. Then
	\begin{equation} \label{eq:forest-H}
		x H_{\forest}(x) = 
		{\left( \frac{x}{1+\hat{H}_{\tree}(x)} \right)}^{\lrangle{-1}} = 
		{\left( \frac{x}{1+h(1)x+ x\left( x-\frac{G(x)}{x} \right)^{\lrangle{-1}} } \right)}^{\lrangle{-1}}, 
	\end{equation}
	where $\hat{H}_{\tree}(x)$ is given by \cref{Htree} and \eqref{eq:tree-H}.
\end{corollary}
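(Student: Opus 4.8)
The plan is to combine \cref{thm:speicher} and \cref{thm:tree} through a single structural observation: a series-reduced planar forest is exactly the data of a noncrossing partition of its boundary vertices together with a choice of series-reduced planar tree on each block. Granting this, the generating-function identity will fall out by feeding the tree count of \cref{thm:tree} into Speicher's noncrossing-partition formula.

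First I would make the bijection precise. Given $F\in\F_n$, its connected components are series-reduced planar trees, and each component uses as its leaves a subset of the $n$ boundary vertices; these subsets partition $[n]$. Because the components are pairwise disjoint and properly embedded in the disk, this partition is \emph{noncrossing}: if two distinct components carried leaves $a<b<c<d$ with $a,c$ in one and $b,d$ in the other, then connectedness of each component would force the two embedded trees to intersect, contradicting disjointness. Conversely, any $\pi=\{B_1,\dots,B_\ell\}\in\NC_n$ together with a series-reduced planar tree on each block (its leaves taken in the clockwise order inherited from the disk) embeds without crossings into a unique element of $\F_n$. The small blocks require care and match the conventions of \cref{Htree}: a block of size $1$ carries the unique tree of $\T_1$ (a boundary vertex attached to a degree-$1$ internal vertex, weight $g(1)$), and a block of size $2$ carries the unique tree of $\T_2$ (a single edge with no internal vertex, weight $1$).

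Next I would transport the statistic across this bijection. Writing $h_T(m)=\sum_{T\in\T_m}\prod_{v\in\Verts_\text{int}(T)} g(\deg v)$ for the decorated tree count of \cref{thm:tree} applied with $f=g$ (so $h_T(1)=g(1)$ and $h_T(2)=1$), and using that the internal vertices of $F$ are precisely the internal vertices of its component trees, the bijection yields
$$ h(n)=\sum_{F\in\F_n}\prod_{v\in\Verts_\text{int}(F)} g(\deg v) = \sum_{\pi=\{B_1,\dots,B_\ell\}\in\NC_n} h_T(\#B_1)\cdots h_T(\#B_\ell). $$
This is exactly the hypothesis of \cref{thm:speicher} with input function $f=h_T$. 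Its associated generating function is $1+\sum_{m\ge1}h_T(m)x^m = 1+\hat H_\tree(x)$ by \cref{Htree}, and its output is $1+\sum_{n\ge1}h(n)x^n=H_\forest(x)$, so Speicher's theorem gives the first claimed equality
$$ x H_\forest(x) = \left(\frac{x}{1+\hat H_\tree(x)}\right)^{\lrangle{-1}}. $$
The second equality is then bookkeeping: substitute $\hat H_\tree(x)=h(1)x+H_\tree(x)$ from \cref{Htree}, replace $\tfrac1x H_\tree(x)=\bigl(x-\tfrac1x G(x)\bigr)^{\lrangle{-1}}$ via \eqref{eq:tree-H} (with $f=g$, hence $F=G$), and use $h(1)=g(1)$.

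I expect the only genuine obstacle to be the verification of the bijection — establishing the noncrossing property of the component leaf-sets and correctly accounting for singleton and two-element blocks against the weights $h_T(1)=g(1)$ and $h_T(2)=1$. Once the identity $h(n)=\sum_{\pi\in\NC_n}\prod_B h_T(\#B)$ is secured, the corollary is an immediate consequence of \cref{thm:speicher} together with \cref{thm:tree}.
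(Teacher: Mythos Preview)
Your proposal is correct and follows essentially the same approach as the paper: the paper's proof is a terse three sentences stating that the connected components of a planar forest give a noncrossing partition, so one applies \cref{thm:speicher} with $1+\hat H_{\tree}(x)$ playing the role of $F(x)$, and then invokes \cref{thm:tree}. You have simply fleshed out the details of that argument, in particular the handling of blocks of size $1$ and $2$ and the explicit verification that the tree generating function is $1+\hat H_{\tree}(x)$.
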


\begin{proof}
	The connected components of a planar forest embedded in a disk have the structure of a noncrossing partition.  So we can enumerate planar forests by applying \cref{thm:speicher}, with $1+\hat{H}_{\tree}(x)$ playing the role of $F(x)$ (which has constant term $1$).  The result now follows from \cref{thm:tree}.
\end{proof}

Although we don't need it for what follows, we note that the trees in $\T_n$ are in bijection with \emph{dissections of a polygon}.

\begin{definition}
	Let $\mathbf{Q}_n$ denote a convex $n$-gon with vertices labelled $1,2,\dots,n$. A \emph{dissection} of $\Q_n$ is a subdivision $\rho$ of $\Q_n$ into smaller polygons $\{P_1,\dots, P_{\ell}\}$ obtained by drawing diagonals that don't intersect in their interiors. We say that a dissection of $\Q_n$ has \emph{type} $(m_3,\dots,m_n)$ if it consists of $m_i$ $i$-gons. \cref{fig:tree-9} (right) gives an example of a dissection of $\Q_9$ of type $(2,1,1,0,0,0,0)$.
\end{definition}

Clearly the series-reduced planar trees in $\T_n(m_3,\dots,m_n)$ are also in bijection with the dissections of $\Q_n$ of type $(m_3,\dots,m_n)$. Both of these objects are in bijection with the faces of the $(n-3)$-dimensional associahedron as well as its normal fan. The latter is combinatorially the Stanley--Pitman fan \cite{stanley2002polytope} or the tropical totally positive Grassmannian $\Trop^+\Gr_{2,n}$ \cite{speyer2005tropical}.

We can now translate \cref{thm:tree} into the language of polygon dissections. Let $\DP_n$ denote the set of dissections of 
the $n$-gon $\Q_n$.  The following theorem follows from the Implicit Species Theorem of Joyal \cite{bergeron1998combinatorial} and also appeared in \cite[Theorem 4.2]{schuetz2016polygonal}.

\begin{theorem} 
	\label{thm:DP}
	Let $K$ be a field.  Given a function $f:\Z_{\ge 3} \to K$, define a new function $h:\Z_{\ge 3} \to K$ by
	$$h(n)=\sum_{\rho = \{P_1,\dots,P_\ell\}\in \DP_n} f(\#P_1) f(\#P_2)\dots f(\# P_\ell),$$ 
	where $\#P_i$ denotes the number of vertices in the polygon $P_i$. Let $F(x)=\sum_{n\geq 3} f(n)x^n$ and $H(x) = x^2+ \sum_{n \geq 3} h(n)x^n$. Then
	\begin{equation*}
		\frac{1}{x}H(x) = {\left(x-\frac{1}{x}F(x) \right)}^{\lrangle{-1}}.
	\end{equation*}
\end{theorem}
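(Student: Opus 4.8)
The plan is to deduce \cref{thm:DP} directly from \cref{thm:tree} by transporting the statement across the classical duality between dissections of a polygon and series-reduced planar trees, which was already asserted (without proof) in the paragraph preceding the theorem. Concretely, I would show that the two definitions of $h(n)$ — one summing weighted products over $\DP_n$, the other over $\T_n$ — produce identical numbers for every $n\ge 3$. Once that is established, the two power series $H(x)$ are literally equal, and \cref{thm:tree} supplies the functional identity verbatim.

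First I would make the bijection $\Phi\colon \DP_n \to \T_n$ explicit. Given a dissection $\rho=\{P_1,\dots,P_\ell\}$ of $\Q_n$, place one internal vertex of $\Phi(\rho)$ in each cell $P_j$ and one leaf on each of the $n$ boundary edges of $\Q_n$, and join two vertices precisely when the corresponding cells (or a cell and the exterior, in the case of a boundary edge) share a diagonal or boundary edge. The result is a tree properly embedded in the disk with $n$ leaves in clockwise order, and the absence of internal vertices of degree $2$ reflects the fact that every cell of a dissection is a genuine polygon with at least three sides, so $\Phi(\rho)\in\T_n$; the inverse is the usual planar dual. The key point, and where I expect to spend the most care, is the degree bookkeeping: a cell $P_j$ that is an $i$-gon has exactly $i$ sides, each of which is either a diagonal of $\rho$ (contributing an internal edge at the dual vertex) or a boundary edge of $\Q_n$ (contributing an edge to a leaf), so the internal vertex dual to $P_j$ has degree exactly $\#P_j=i$. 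Thus $\Phi$ restricts to a type-preserving bijection $\DP_n(m_3,\dots,m_n)\to\T_n(m_3,\dots,m_n)$, exactly as illustrated in \cref{fig:tree-9}.

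It follows that for each $\rho\in\DP_n$ with $T=\Phi(\rho)$ one has $\prod_j f(\#P_j)=\prod_{v\in\Verts_\text{int}(T)} f(\deg(v))$, so summing over $\DP_n$ and over $\T_n$ gives the same value of $h(n)$ for every $n\ge 3$. Since $F(x)$ and $H(x)$ are then defined by the same formulas as in \cref{thm:tree} — with the $x^2$ term simply recording, on the dissection side, the same convention as the unique $2$-leaf tree — the two series $H(x)$ coincide, and the conclusion $\tfrac1x H(x)=\bigl(x-\tfrac1x F(x)\bigr)^{\lrangle{-1}}$ is immediate from \cref{thm:tree}. I expect the only genuine obstacle to be stating the duality and the degree/number-of-vertices correspondence cleanly enough to justify type preservation; once that is in place, the enumerative identity and the generating-function conclusion are purely formal.
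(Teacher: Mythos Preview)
Your proposal is correct and matches the paper's approach: the paper explicitly presents \cref{thm:DP} as a translation of \cref{thm:tree} via the type-preserving bijection between $\T_n$ and $\DP_n$, and your argument spells out that bijection and the degree/vertex-count bookkeeping in exactly the way needed.
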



\section{Grassmannian graphs, trees and forests}\label{sec:Grassmannian}

The totally nonnegative Grassmannian, a particular semi-algebraic subset of the real Grassmannian, was first introduced by Postnikov and Lusztig \cite{Postnikov:2006kva,lusztig1994total}, and it has been a topic of intense investigation by both mathematicians and physicists. In this section we introduce the totally nonnegative Grassmannian together with (equivalence classes of) Grassmannian graphs, which index its cells. We also introduce Grassmannian \emph{forests}, which (conjecturally) enumerate the boundary strata of a related object called the \emph{momentum amplituhedron.}  We  then provide explicit enumeration formulae for Grassmannian trees and forests.

\subsection{The totally nonnegative Grassmannian and its positroid stratification}

Fix integers $k,n$ such that $0\le k\le n$. The \emph{Grassmannian} $\Gr_{k,n}(\mathbb{F})$ over a field $\mathbb{F}$ is the variety of all $k$-dimensional subspaces of $\mathbb{F}^n$. Each element of $\Gr_{k,n}(\mathbb{F})$ may be represented by a full rank $k\times n$ matrix, modulo  row operations, whose rows span the $k$-dimensional subspace.  We denote the element of $\Gr_{k,n}(\mathbb{F})$ represented by the matrix $C$ by $[C]$. 

In what follows, let $\Gr_{k,n}=\Gr_{k,n}(\mathbb{R})$ denote the \emph{real Grassmannian}. Recall that $[n]:=\{1,2,\ldots,n\}$. Let $\binom{[n]}{k}$ denote the set of all $k$-element subsets of $[n]$. Given an element $V=[C]\in\Gr_{k,n}$, the maximal minors $\Delta_I(C)$ of $C$, where $I\in\binom{[n]}{k}$, form projective coordinates on $\Gr_{k,n}$, called the \emph{Pl\"{u}cker coordinates} of $V$. Consequently, we write $\Delta_I(V)$ to denote $\Delta_I(C)$.

\begin{definition}[{\cite[Section 3]{Postnikov:2006kva}}] 
	An element of $\Gr_{k,n}$ is said to be \emph{totally positive} (resp., \emph{totally nonnegative}) if all of its Pl\"{u}cker coordinates are positive (resp., nonnegative). The \emph{totally positive Grassmannian} $\Gr_{k,n}^{> 0}$ (resp., \emph{totally nonnegative Grassmannian} $\Gr_{k,n}^{\ge 0}$) is the semi-algebraic subset of all totally positive (resp., totally nonnegative) elements of $\Gr_{k,n}$. For each $M\subset\binom{[n]}{k}$, let $S_M$ be the subset of elements of $\Gr_{k,n}^{\ge 0}$ whose Pl\"{u}cker coordinates are all strictly positive for $I\in M$ and otherwise zero. If $S_M$ is non-empty, $M$ is called a \emph{positroid} and $S_M$ its \emph{positroid cell}. 
\end{definition}

Each positroid cell is indeed a topological cell \cite[Theorem 6.5]{Postnikov:2006kva}, and moreover, the positroid cells of $\Gr_{k,n}^{\ge 0}$ glue together to form a CW complex \cite{postnikov2009matching}.

It was shown in \cite{Postnikov:2006kva} that positroid cells of $\Gr_{k,n}$ are in bijection with various combinatorial objects including (equivalence classes of) \emph{reduced plabic graphs} $G$ of type $(k,n)$, and \emph{decorated permutations} $\sigma$ on $[n]$ with $k$ anti-excedances (as will be defined in \cref{sec:perm}). Consequently, these objects provide unambiguous labels for positroid cells; we will write $S_G$ or $S_\sigma$ to denote the positroid cell associated to $G$ or $\sigma$, respectively. 

\subsection{Grassmannian graphs, trees, and forests}

The following notion of \emph{Grassmannian graph} implicitly appeared in \cite{Arkani-Hamed:2012zlh} as a generalization of plabic graphs; the definition below then formally appeared in \cite{Postnikov:2018jfq}.

\begin{definition}[{\cite[Definition 4.1]{Postnikov:2018jfq}}]
	A \emph{Grassmannian graph}\footnote{Our definition differs from Postnikov's \cite[Definition 4.1]{Postnikov:2018jfq} in two ways: (1) we exclude vertices of degree $2$ and (2) unless $v$ is a boundary leaf, we do not allow the helicity of a vertex $v$ to be $0$ or $\deg(v)$. These further restrictions guarantee that our Grassmannian graphs which are forests are always reduced in the sense of \cite[Definition 4.5]{Postnikov:2018jfq}.} is a finite planar graph $G$ with \emph{vertices} $\Verts(G)$ and \emph{edges} $\Edges(G)$, embedded in a disk, with $n$ \emph{boundary vertices} $\{b_1,b_2,\ldots,b_n\}$ of degree $1$ on the boundary of the disk (labelled in  clockwise order). We let $\Verts_\text{int}(G) = \Verts(G)\setminus\{b_1,b_2,\ldots,b_n\}$ denote the vertices in the interior of the disk, each of which is required to be connected by a path to the boundary of the disk. We require moreover that $G$ has no vertices of degree $2$. Each $v\in\Verts_\text{int}(G)$ is given a nonnegative integral \emph{helicity} $h(v)$: if $v$ is a \emph{boundary leaf}, i.e.\ an internal vertex of degree $1$ connected to a boundary vertex, then $h(v) = 0$ or $1$; otherwise, $1 \leq h(v) \leq \deg(v)-1$, where $\deg(v)$ denotes the degree of $v$. We say that $v$ has \emph{type} $(k,d)$ when $k=h(v)$ and $d=\deg(v)$. We refer to $v$ as being \emph{white} if $h(v)=1$ and  \emph{black} if $h(v)=\deg(v)-1$; otherwise we call it \emph{generic}.
	
	The \emph{helicity} of a Grassmannian graph $G$ with $n$ boundary vertices is the number $h(G)$ given by 
	\begin{align}\label{eq:helicity-G}
		h(G):=\sum_{v\in\Verts_\text{int}(G)}\left(h(v)-\frac{\deg(v)}{2}\right) +\frac{n}{2}\, \in \{0,1,\dots, n\}.
	\end{align}  
	Such a Grassmannian graph $G$ is said to be of \emph{type} $(k,n)$ where $k=h(G)$.
	
	A \emph{plabic graph} is a Grassmannian graph in which each internal vertex is either white
	or black; that is, there are no generic vertices.
\end{definition}

In this article we will restrict our attention to Grassmannian graphs which are forests, in other words, have no internal cycles.

\begin{definition}
	A \emph{plabic forest} is an acyclic plabic graph, and a \emph{plabic tree} is a connected acyclic plabic graph. Similarly, a \emph{Grassmannian forest}  is an acyclic Grassmannian graph, and a \emph{Grassmannian tree} is a connected acyclic Grassmannian graph. If $F$ is a Grassmannian forest, we let $\Trees(F)$ denote the Grassmannian trees in $F$.
\end{definition}

\begin{remark}
	Given a Grassmannian tree $T$ with $n$ boundary vertices,
	\begin{align}\label{eq:multiplicity-T}
		n=\sum_{v\in\Verts_\text{int}(T)}\deg(v)-2(|\Verts_\text{int}(T)|-1)=2+\sum_{v\in\Verts_\text{int}(T)}(\deg(v)-2)\,,
	\end{align}
	and substituting this into \eqref{eq:helicity-G}, we find that its helicity can be expressed as
	\begin{align}\label{eq:helicity-T}
		h(T)=\sum_{v\in\Verts_\text{int}(T)}h(v)-(|\Verts_\text{int}(T)|-1)=1+\sum_{v\in\Verts_\text{int}(T)}(h(v)-1)\,.
	\end{align}
\end{remark}

\begin{remark}\label{remark:tree-non-generic}
	Every internal vertex of a Grassmannian tree $T$ of type $(1,n)$ (resp.\ $(n-1,n)$) must be white (resp.\ black). The observation follows from \eqref{eq:helicity-T} (and \eqref{eq:multiplicity-T}) together with the restriction that $h(v)\ge 1$ (resp.\ $h(v)\le \deg(v)-1$) for every internal vertex $v$.
\end{remark}

There is a natural partial order and equivalence relation which can be defined for Grassmannian forests.

\begin{definition}\label{def:contracted}
	Given Grassmannian forests $F$ and $F'$, we say that $F'$ \emph{coarsens} $F$ (and that $F$ \emph{refines} $F'$) if $F'$ can be obtained from $F$ by applying a sequence of \emph{vertex contraction moves} in which two adjacent internal white vertices (or two adjacent internal black vertices) get contracted into a single white (or black) vertex (see \cref{fig:contraction-move}). The \emph{refinement order} on Grassmannian forests is the partial order $\le_\text{ref}$ where $F\le_\text{ref} F'$ if $F'$ coarsens $F$. Grassmannian forests $F$ and $F'$ are \emph{refinement-equivalent} if one can be obtained from the other by a sequence of refinements and coarsenings. A Grassmannian forest is said to be \emph{contracted} or \emph{maximal} if it is maximal with respect to the refinement order.
\end{definition}

\begin{figure}[h]
	\centering
	\begin{minipage}{.5\textwidth}
		\begin{align*}
			\vcenter{\hbox{\includegraphics[scale=0.3]{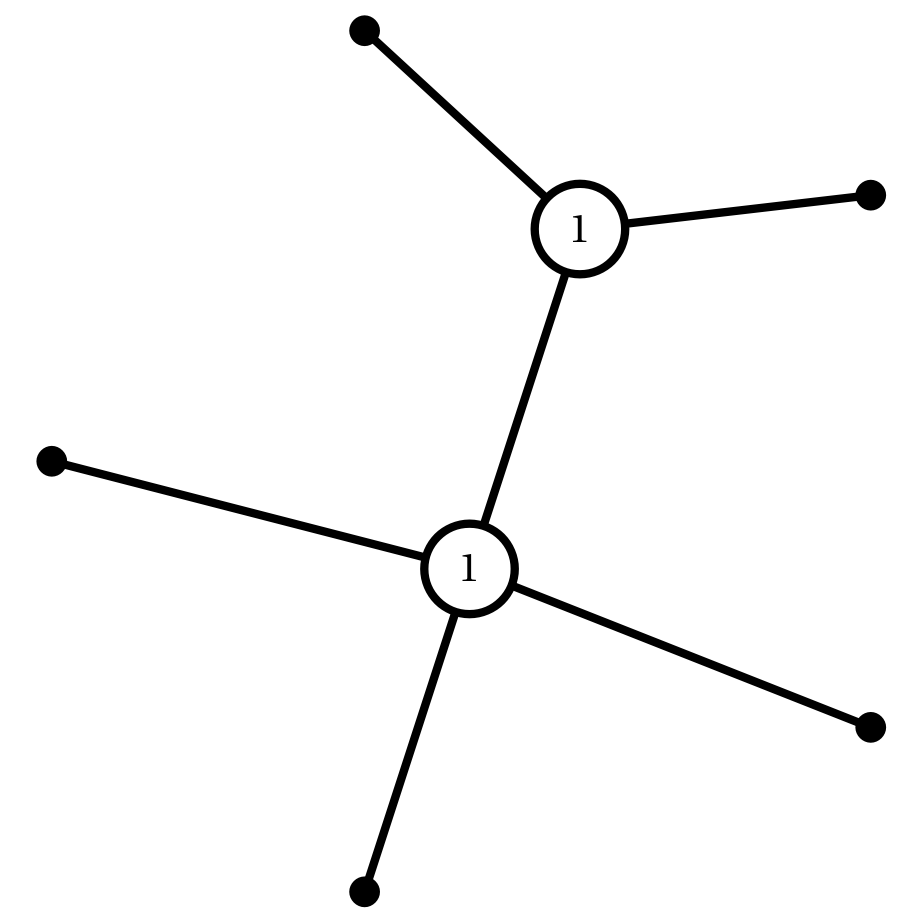}}}\to\vcenter{\hbox{\includegraphics[scale=0.3]{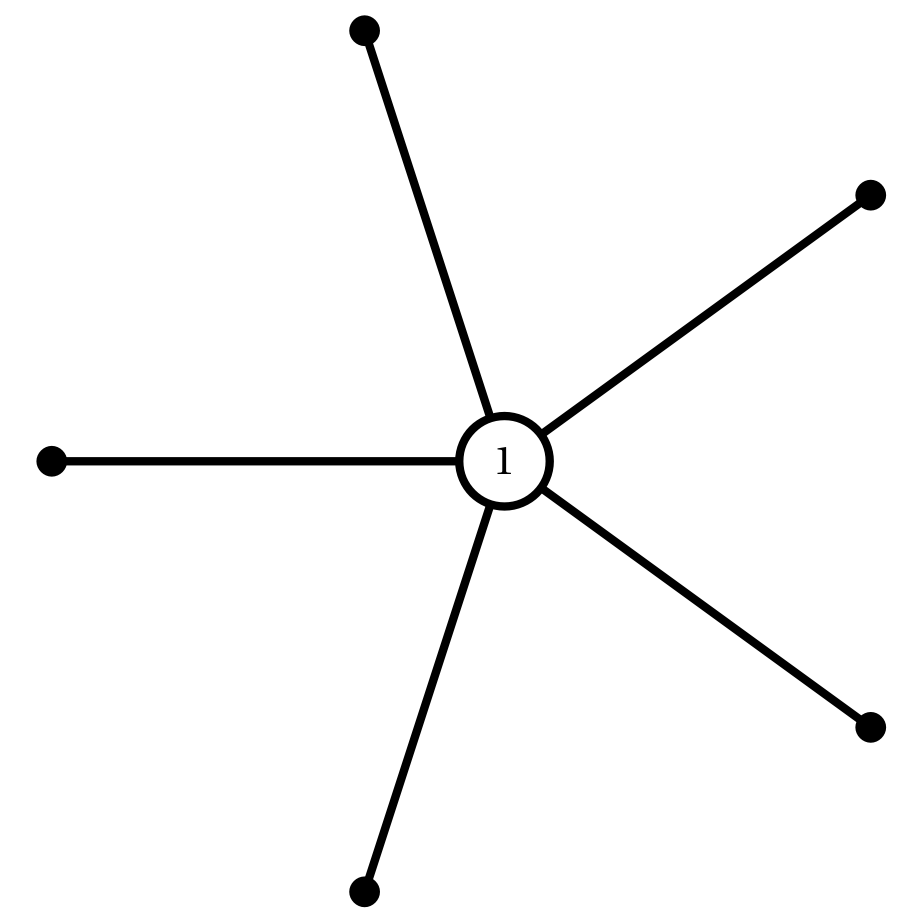}}}
		\end{align*}
	\end{minipage}%
	\begin{minipage}{.5\textwidth}
		\begin{align*}
			\vcenter{\hbox{\includegraphics[scale=0.3]{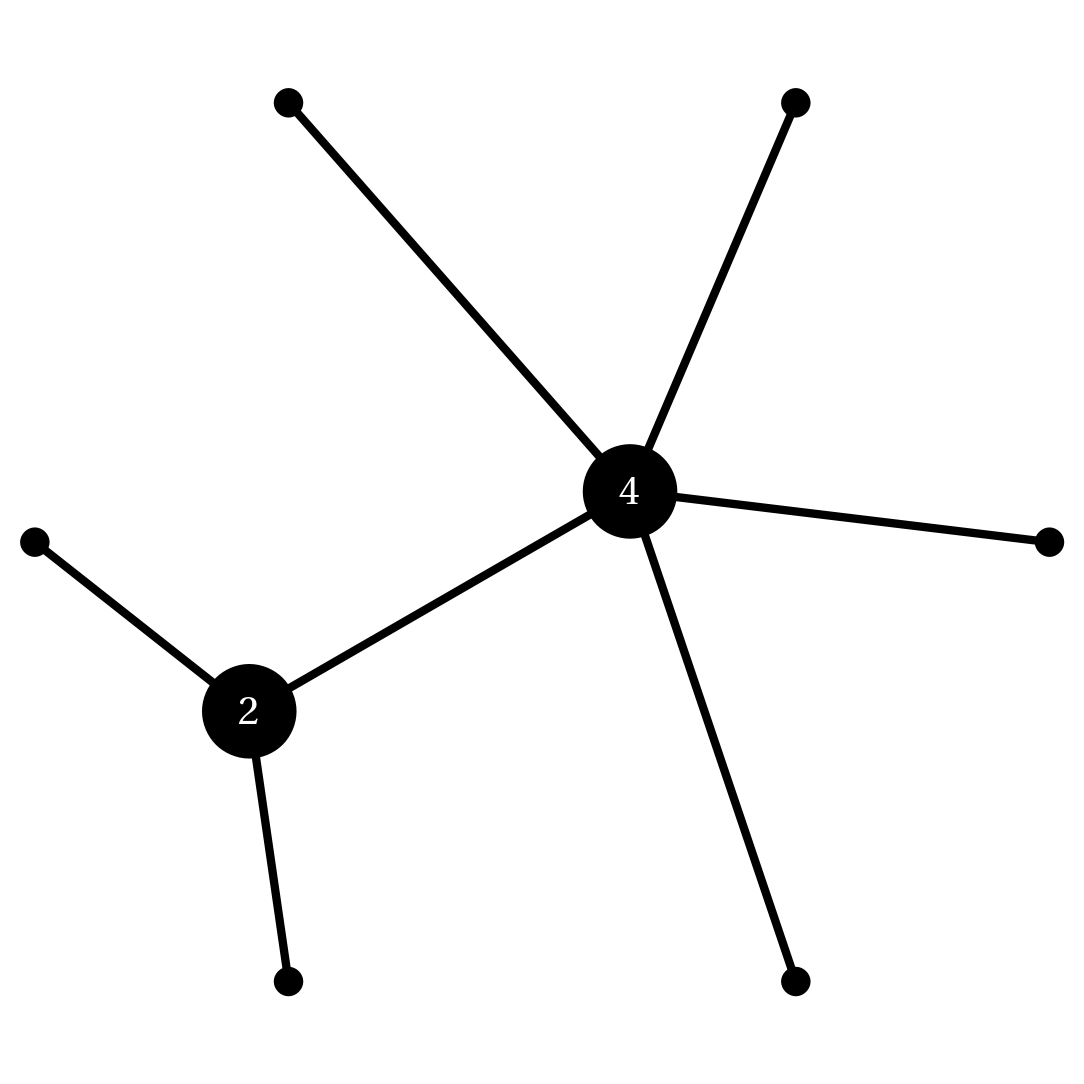}}}\to\vcenter{\hbox{\includegraphics[scale=0.3]{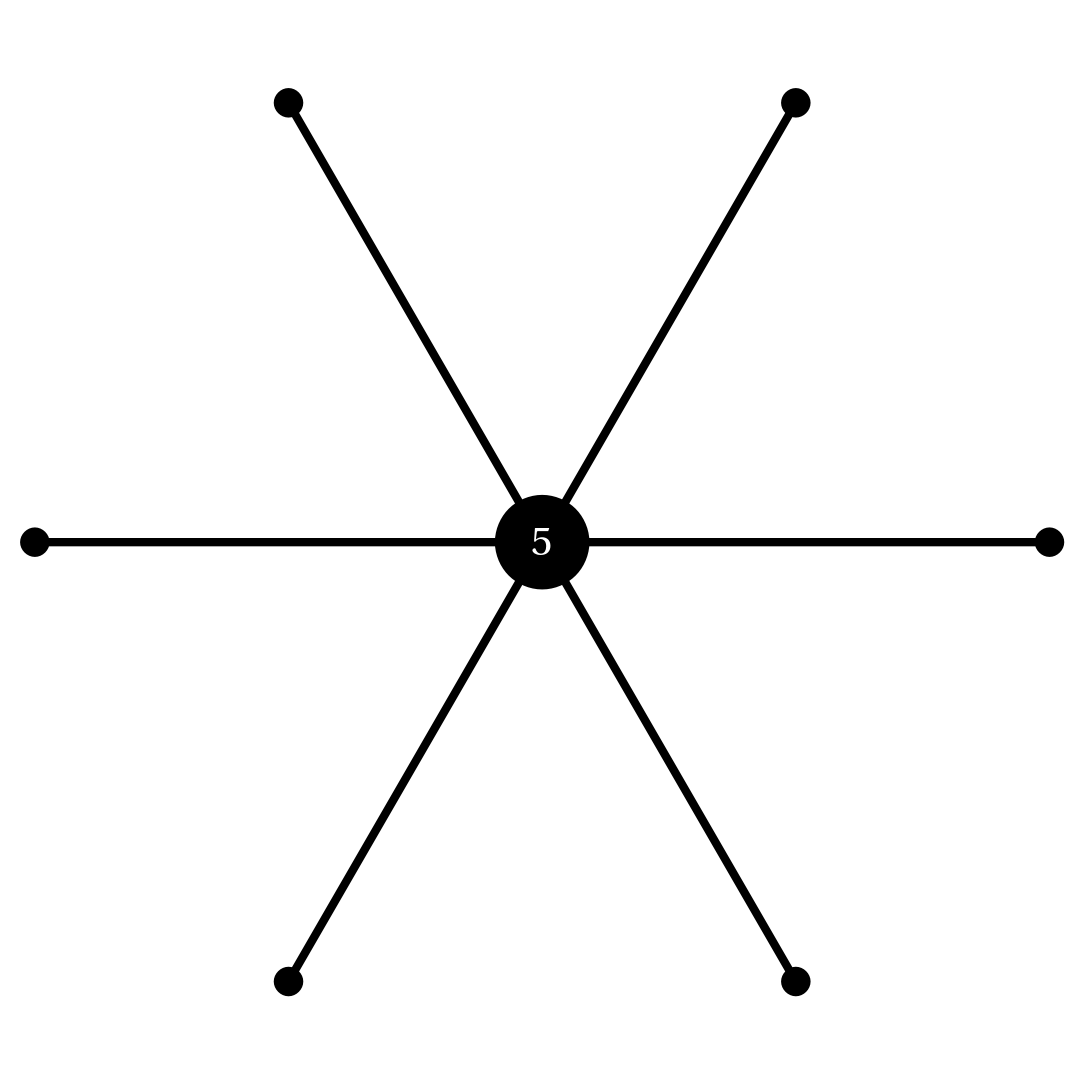}}}
		\end{align*}
	\end{minipage}
	\caption{Examples of vertex contraction moves.}
	\label{fig:contraction-move}
\end{figure}

More generally, there is a notion of refinement order for Grassmannian graphs \cite[Definition 4.7]{Postnikov:2018jfq} which coincides with the above definition when restricted to Grassmannian forests. Positroid cells of $\Grk$ are also in bijection with refinement-equivalence classes of reduced Grassmannian graphs of type $(k,n)$.

\begin{remark}
	Each refinement-equivalence class of Grassmannian trees (resp., forests) contains a unique contracted  Grassmannian tree (resp., forest).  This contracted tree (or forest) provides a canonical choice of representative for the equivalence class.  Note that a Grassmannian forest is contracted if and only if it has no adjacent white vertices and no adjacent black vertices. Similarly a plabic forest is contracted if and only if it is bipartite.
\end{remark}

\begin{remark}\label{remark:n-le-3}
	Every refinement-equivalence class of Grassmannian trees with three or fewer boundary vertices contains a single element. Given a single boundary vertex, one can construct precisely two Grassmannian trees. They consist of a single white or black boundary leaf. The only Grassmannian tree with two boundary vertices contains no internal vertices and one edge connecting the two boundary vertices. There are two Grassmannian trees with three boundary vertices, containing a single white or black trivalent vertex.
\end{remark}

\cref{fig:G-tree-9} depicts two refinement-equivalent Grassmannian trees with $9$ boundary vertices. 

\begin{figure}[h]
	\centering
	\begin{align*}
		\begin{gathered}
			\includegraphics[scale=0.4]{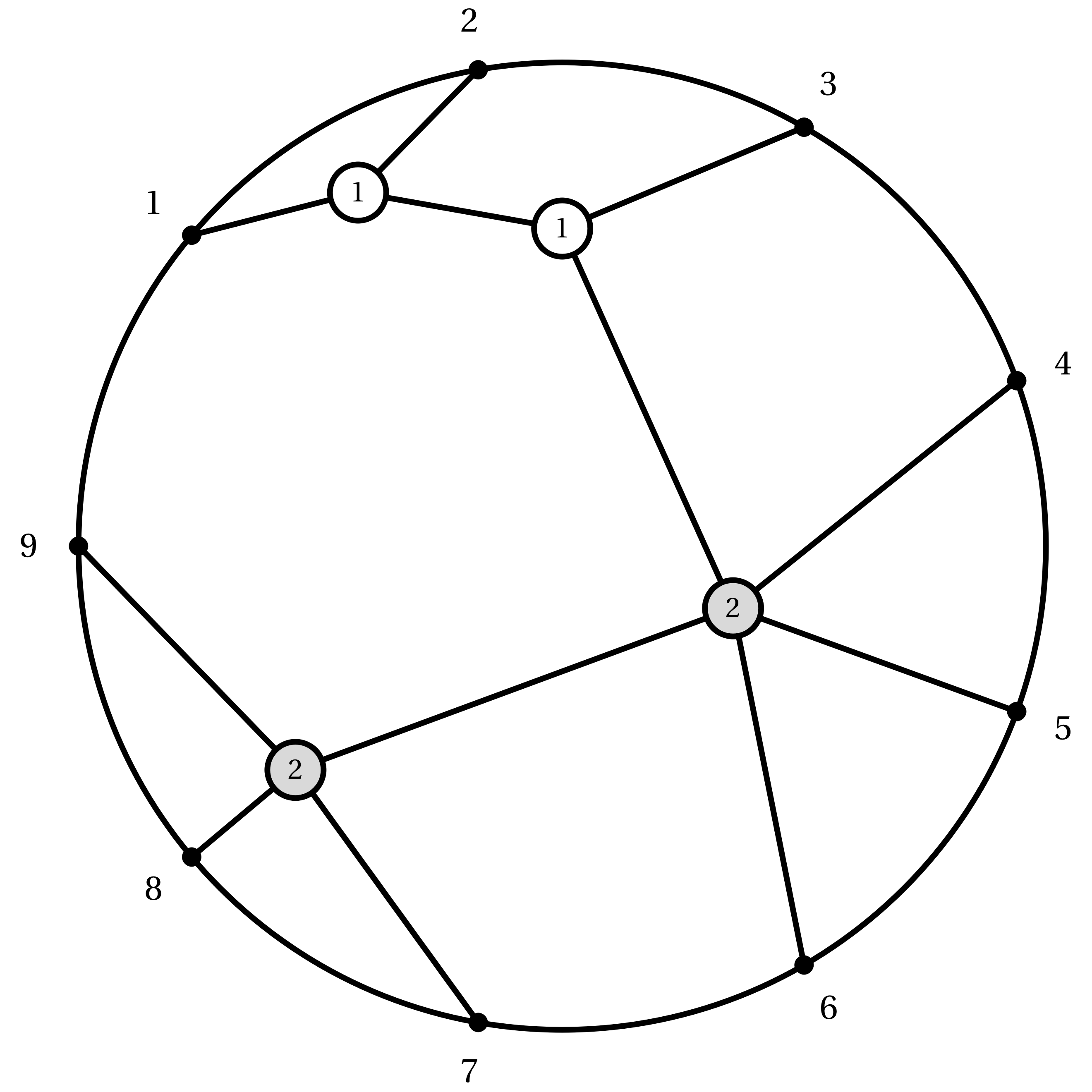}
		\end{gathered}
		\le
		\begin{gathered}
			\includegraphics[scale=0.4]{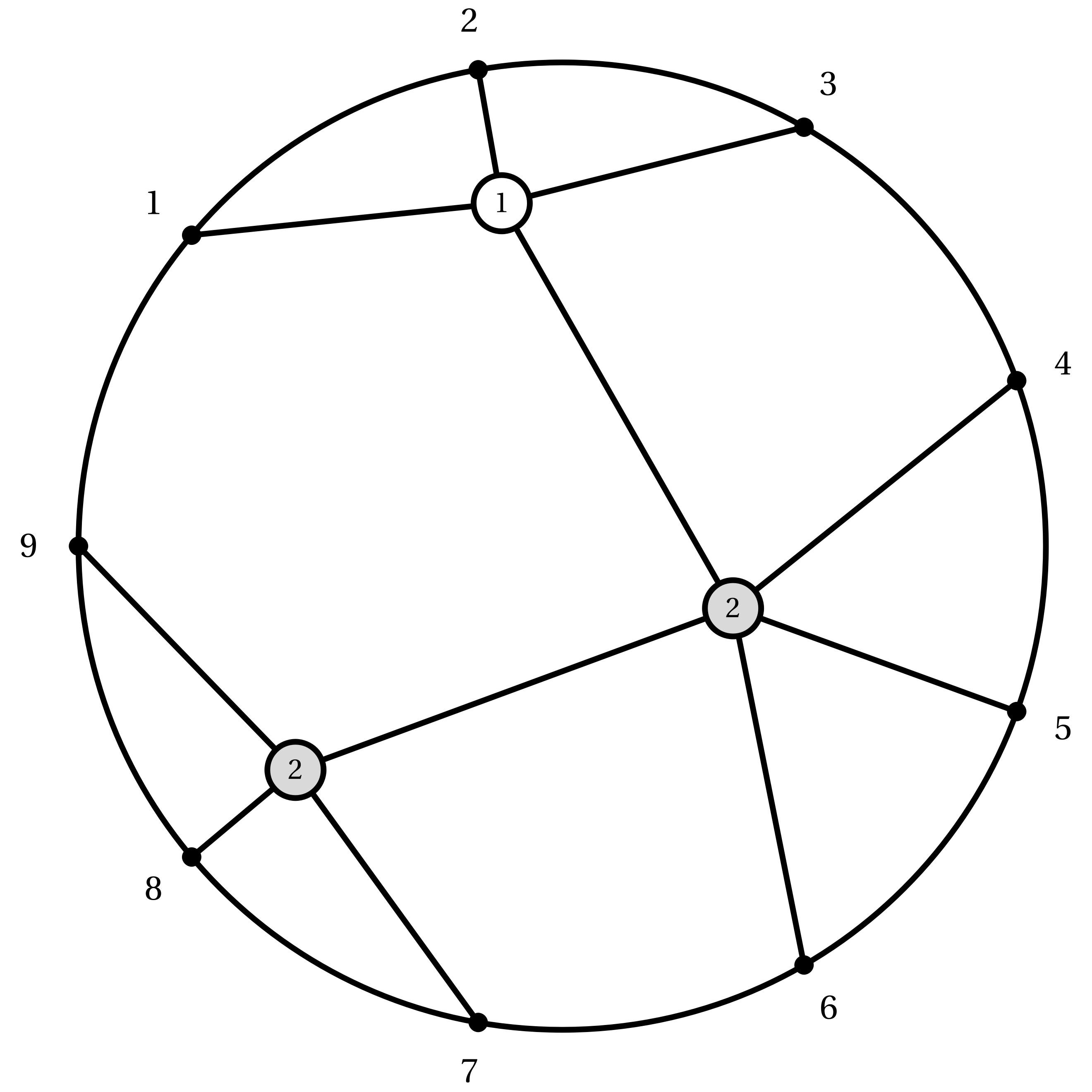}
		\end{gathered}
	\end{align*}
	\caption{Two refinement-equivalent Grassmannian trees with $9$ boundary vertices. The left tree is a refinement of the right one, and the right tree is contracted. The helicity of each vertex is printed inside the vertex.} 
	\label{fig:G-tree-9}
\end{figure}

We next define a natural dimension statistic associated to each refinement-equivalence class of Grassmannian forests; this will capture the dimension of the image of this positroid cell in the momentum amplituhedron.

\begin{definition}\label{def:mdim} 
	For every internal vertex $v$ in a Grassmannian forest, define  
	\begin{align}\label{eq:mom-dim-v}
		m(v) := \left\{\begin{array}{ll}
			2\deg(v)-4,& \text{$v$ is generic},\\
			\deg(v)-1,& \text{otherwise}.					
		\end{array}\right.
	\end{align}
	The \emph{momentum amplituhedron dimension} (or \emph{mom-dimension}) of a Grassmannian tree $T$ with $n$ boundary vertices, denoted by $\dim_\M(T)$, is defined as 
	\begin{align}\label{eq:mom-dim-T}
		\dim_\M(T):=\left\{\begin{array}{ll}
			n-1,& n\le 2,\\
			1+\sum_{v\in\Verts_\text{int}(T)}(m(v)-1),& \text{otherwise}.		
		\end{array}\right.
	\end{align}
	The \emph{momentum amplituhedron dimension} of a Grassmannian forest $F$, denoted by $\dim_\M(F)$, is the sum of the mom-dimensions of the Grassmannian trees in $F$:
	\begin{align}\label{eq:mom-dim-F}
		\dim_\M(F):=\sum_{T\in\Trees(F)}\dim_\M(T)\,.
	\end{align}
\end{definition}

\begin{remark}\label{remark:tree-map}
	Given a Grassmannian tree $T$ of type $(k,n)$, if we replace each internal vertex $v$ by a vertex of helicity $\deg(v)-h(v)$, we obtain a Grassmannian tree of type $(n-k,n)$. By \cref{def:mdim}, \eqref{eq:helicity-T} and \eqref{eq:multiplicity-T}, this map gives a dimension-preserving bijection between the Grassmannian trees of type $(k,n)$ and the Grassmannian trees of type $(n-k,n)$.  
\end{remark}

By \cite{Postnikov:2018jfq}, the helicity is an invariant of refinement-equivalence classes of Grassmannian graphs. The following gives an analogue of this statement for the mom-dimension.

\begin{lemma}\label{lem:invariant}
	The mom-dimension is an invariant of refinement-equivalence classes of Grassmannian forests.
\end{lemma}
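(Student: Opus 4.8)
The plan is to reduce the statement to a single contraction move and then verify invariance by a short local computation. By \cref{def:contracted}, refinement-equivalence is the equivalence relation generated by vertex contraction moves, so it suffices to show that $\dim_\M(F)$ is unchanged when one pair of adjacent internal white vertices (or one pair of adjacent internal black vertices) is contracted into a single vertex; invariance under the inverse (refining) move is then automatic, and under an arbitrary sequence of moves it follows by iteration.

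First I would observe that a contraction move is local. The two vertices $u,w$ being merged lie in a common tree $T\in\Trees(F)$, and merging them leaves $T$ connected and acyclic while altering neither the other trees of $F$ nor the number $n$ of boundary vertices of $T$. Hence by \eqref{eq:mom-dim-F} the change in $\dim_\M(F)$ equals the change in $\dim_\M(T)$, and it is enough to work inside a single tree. Since each of $u,w$ is a non-leaf internal vertex it has degree $\ge 3$ (there are no degree-$2$ vertices, and a boundary leaf has degree $1$ and is adjacent only to a boundary vertex, so cannot be contracted); using \eqref{eq:multiplicity-T} one checks $n\ge 4$ both before and after the move, so the relevant branch of \eqref{eq:mom-dim-T} is $\dim_\M(T)=1+\sum_{v}(m(v)-1)$ in each case.

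Next I would record the two numerical facts driving the computation. Writing $d_u=\deg(u)$ and $d_w=\deg(w)$, the contracted vertex $v$ has degree $d_u+d_w-2$ (the edge joining $u$ and $w$ is absorbed). By the very definition of the move in \cref{def:contracted}, $v$ is white when $u,w$ are white and black when they are black, so all three vertices are non-generic and \eqref{eq:mom-dim-v} gives $m=\deg-1$ for each of them. The only terms of $\sum_v(m(v)-1)$ that change are those indexed by $u$ and $w$ (now replaced by $v$), and
\[
(m(u)-1)+(m(w)-1)=(d_u-2)+(d_w-2)=(d_u+d_w-2)-2=m(v)-1,
\]
so $\dim_\M(T)$, and therefore $\dim_\M(F)$, is unchanged; the black case is identical by the same arithmetic.

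The only genuinely delicate point is the bookkeeping that keeps us in the $m=\deg-1$ branch: a contraction merges two white vertices or two black vertices and never involves a generic vertex, and the merged vertex retains its colour, so the generic branch $2\deg-4$ of \eqref{eq:mom-dim-v} is never invoked—this is precisely what makes the contribution additive. For completeness I would also verify via \eqref{eq:helicity-G} that the colour is consistent with the helicity being a refinement invariant (two white vertices merge to a vertex with $h(v)=1$, two black vertices to one with $h(v)=\deg(v)-1$), confirming the degree/colour data used in the displayed identity.
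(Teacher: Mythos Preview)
Your proof is correct and follows essentially the same approach as the paper's: reduce to a single contraction (or uncontraction) on a tree, note that the involved vertices are non-generic so $m=\deg-1$, and verify the additive identity $(m(u)-1)+(m(w)-1)=m(v)-1$ from $\deg(v)=d_u+d_w-2$. Your write-up is in fact slightly more careful than the paper's in checking that the $n\ge 3$ branch of \eqref{eq:mom-dim-T} applies and in explaining why only the non-generic branch of \eqref{eq:mom-dim-v} is ever invoked.
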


\begin{proof}
	It is sufficient to prove that the mom-dimension for refinement-equivalent Grassmannian trees is the same. Without loss of generality, let $T$ and $T'$ be refinement-equivalent Grassmannian trees where $T$ is obtained from $T'$ by applying a single vertex uncontraction move to some non-generic internal vertex $v_\ast\in\Verts_\text{int}(T')$. Let $\Verts_\text{int}(T)\setminus\Verts_\text{int}(T')=\{v_1,v_2\}$ be the two non-generic internal vertices resulting from the uncontraction of $v_\ast$. Clearly $\deg(v_\ast)=\deg(v_1)+\deg(v_2)-2$. Then
	\begin{align*}
		1+(m(v_1)-1)+(m(v_2)-1)&= m(v_1)+m(v_2)-1 \\
		&=(\deg(v_1)-1)+(\deg(v_2)-1) -1 \\
		&=(\deg(v_1)+\deg(v_2)-2) - 1 \\
		&=\deg(v_\ast)-1=m(v_\ast)=1+(m(v_\ast)-1)\,,
	\end{align*}
	from which it follows that $\dim_\M(T)=\dim_\M(T')$.
\end{proof}

\begin{remark}
	For an internal vertex $v$ of type $(k,d)$ in a Grassmannian forest, we have that $m(v)$ is at most $k(d-k)$, the dimension of $\Gr_{k,d}^{\geq 0}$.  We have $m(v)=k(d-k)$ precisely when	$k=1,2,d-2,d-1$. Moreover, for a Grassmannian forest $F$ of type $(k,n)$, the mom-dimension of $F$ is at most the dimension of the positroid cell $S_F$ in $\Gr_{k,n}^{\ge 0}$, and these dimensions coincide when $k=0,1,2,n-2,n-1,n$.  
\end{remark}

As a warmup for proving our main result enumerating Grassmannian trees and forests (\cref{thm:GF-trees}), we first consider the simpler problem of enumerating contracted plabic trees and forests. Recall that a contracted plabic tree (respectively, forest) is simply a bipartite planar tree (respectively, forest).

\begin{theorem}\label{thm:GF-plabictrees}
	The number of contracted plabic trees (equivalently, bipartite planar trees) of type $(k,n)$ with mom-dimension $r$ is equal to the coefficient $[x^ny^kq^r]\mathcal{P}_\text{tree}(x,y,q)$, where 
	\begin{align}
		\label{eq:GF-plabictrees-1}\mathcal{P}_\text{tree}(x,y,q)&=x\left(1+y+yq\,C^{\lrangle{-1}}(x,y,q)\right),\text{ with }\\
		\label{eq:GF-plabictrees-2}C(x,y,q)&=\frac{x(1-q^2 x^2 y)}{(1+xq)(1+xyq)},
	\end{align}
	and the compositional inverse is with respect to the variable $x$.
	
	The number of contracted plabic forests of type $(k,n)$ with mom-dimension $r$ is given by $[x^ny^kq^r]\mathcal{P}_\text{forest}(x,y,q)$, where
	\begin{align}\label{eq:GF-plabicforest-1}
		x \mathcal{P}_\text{forest}(x,y,q)=\left(\frac{x}{1+\mathcal{P}_\text{tree}(x,y,q)}\right)^{\lrangle{-1}},
	\end{align}
	and the compositional inverse is with respect to the variable $x$. Equivalently, 
	\begin{align}\label{eq:GF-plabicforest-2}
		[x^n] \mathcal{P}_\text{forest}(x,y,q) = \frac{1}{n+1} [x^n] 
		\left( 1+\mathcal{P}_\text{tree}(x,y,q) \right)^{n+1}.
	\end{align}
\end{theorem}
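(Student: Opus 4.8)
The plan is to treat the tree generating function and the forest generating function with different tools: a two-colored recursion for \eqref{eq:GF-plabictrees-1}--\eqref{eq:GF-plabictrees-2}, and Speicher's theorem together with Lagrange inversion for \eqref{eq:GF-plabicforest-1}--\eqref{eq:GF-plabicforest-2}. For the tree formula I would first record the local weights. A connected acyclic plabic graph with $n\ge 2$ boundary vertices can have no boundary leaf (a degree-$1$ internal vertex together with its boundary vertex would be a separate component), so its underlying graph lies in $\T_n$ and all internal vertices have degree $\ge 3$; the two trees with $n=1$ (a single white or black boundary leaf) are handled by hand and account for the summand $x(1+y)$. From \eqref{eq:helicity-T} one gets $h(T)=1+\sum_{\text{black }v}(\deg(v)-2)$, so helicity is recorded by a global factor $y$ times $y^{\deg(v)-2}$ at each black vertex (and $y^0$ at each white vertex); and since every internal vertex of a plabic tree is non-generic, \cref{def:mdim} gives $\dim_\M(T)=1+\sum_v(\deg(v)-2)=n-1$, so each boundary vertex other than the marked $b_1$ carries a factor $q$. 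I therefore set $z=qx$, so that a leaf contributes $z$.

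Following \cref{remark:tree-bijection}, I would root $T$ at the internal vertex $v_\ast$ adjacent to $b_1$, pass to the Schr\"{o}der-tree picture, and track the color of each internal vertex, the contracted condition being exactly that adjacent internal vertices have opposite colors. Letting $X_W$ (resp.\ $X_B$) be the generating function for a subtree hanging below a black (resp.\ white) parent --- either a leaf, of weight $z$, or an oppositely colored rooted subtree --- the recursion reads
\[
X_W = z + \frac{X_B^2}{1-X_B}, \qquad X_B = z + \frac{y\,X_W^2}{1-y\,X_W},
\]
and the $n\ge 2$ part of $\tfrac1x\mathcal{P}_\text{tree}$, stripped of its global factor $y$, equals $E:=X_W+X_B-z$ (the lone leaf $z$ inside $E$ being the degenerate $n=2$ tree). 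The key point is that $X_W-z=E-X_B$ and $X_B-z=E-X_W$ hold by the definition of $E$; substituting these into the two relations decouples the system and gives $X_B=E/(1+E)$ and $X_W=E/(1+yE)$. Back-substituting yields
\[
z=X_W+X_B-E=\frac{E\,(1-yE^2)}{(1+E)(1+yE)},
\]
so $E$ is the compositional inverse, in its first argument, of $D(u,y)=\frac{u(1-u^2y)}{(1+u)(1+uy)}$. Unwinding $z=qx$ turns this into $C^{\lrangle{-1}}(x,y,q)=\tfrac1q E(qx,y)$ with $C(x,y,q)=\tfrac1q D(qx,y)=\frac{x(1-q^2x^2y)}{(1+xq)(1+xyq)}$, and reassembling the $n=1$ and $n\ge 2$ contributions gives $\mathcal{P}_\text{tree}=x\bigl(1+y+yq\,C^{\lrangle{-1}}\bigr)$, which is \eqref{eq:GF-plabictrees-1}--\eqref{eq:GF-plabictrees-2}.

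For the forest formula I would use that the connected components of a planar forest form a noncrossing partition, as in the proof of \cref{thm:forest}. Since helicity is additive over components by \eqref{eq:helicity-G} and mom-dimension is additive by definition, Speicher's \cref{thm:speicher} applies with $1+\mathcal{P}_\text{tree}$ (which has constant term $1$) in the role of $F$, yielding $x\,\mathcal{P}_\text{forest}=\bigl(\tfrac{x}{1+\mathcal{P}_\text{tree}}\bigr)^{\lrangle{-1}}$, i.e.\ \eqref{eq:GF-plabicforest-1}. Then \eqref{eq:GF-plabicforest-2} follows by applying the Lagrange inversion formula \cref{thm:LIF} (with $k=1$ and $n$ replaced by $n+1$) to $\Phi=x/(1+\mathcal{P}_\text{tree})$, using $x/\Phi=1+\mathcal{P}_\text{tree}$ and $[x^n]\mathcal{P}_\text{forest}=[x^{n+1}]\Phi^{\lrangle{-1}}$.

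The main obstacle is the tree case: because the contracted condition forces adjacent internal vertices to carry opposite colors, the weight of an internal vertex is not a function of its degree alone, so \cref{thm:tree} does not apply directly and one is forced into the nonlinear two-type system above. The crux is noticing the linear identities $X_W-z=E-X_B$ and $X_B-z=E-X_W$, which are precisely what make that system solvable in closed form and collapse it to the clean rational function $C$.
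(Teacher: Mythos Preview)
Your argument is correct, but for the tree part it takes a genuinely different route from the paper's proof. You handle the contracted (bipartite) constraint head-on by setting up the two-color system $X_W,X_B$ and exploiting the linear relations $X_W-z=E-X_B$, $X_B-z=E-X_W$ to collapse it to a single functional equation for $E$. The paper instead avoids the coupled recursion entirely: it introduces the sign $e(d)=(-1)^{d-1}$ and proves (\cref{thm:equiv-c}) that with this weight the sum over \emph{all} planar trees on a fixed leaf set equals~$1$. This lets them assign the single-vertex weight $f(d)=q^{d-2}e(d)(1+y^{d-2})$, which depends only on degree, and then apply \cref{thm:tree} directly; the alternating sign ensures that each refinement-equivalence class of monochromatic subtrees contributes exactly once, so contracted trees are what is actually counted. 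Your approach is more transparent combinatorially (no signed overcounting), while the paper's device is what makes the proof of \cref{thm:GF-trees} a one-line modification: to pass from plabic to Grassmannian trees one simply adds the generic-vertex term $q^{2d-5}\sum_{k=2}^{d-2}y^{k-1}$ to $f(d)$, whereas a direct recursion in your style would need a separate treatment for generic vertices, which carry no color constraint with their neighbors.

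For the forest formulas \eqref{eq:GF-plabicforest-1}--\eqref{eq:GF-plabicforest-2} your proof coincides with the paper's: both invoke \cref{thm:speicher} via the noncrossing structure of components (as in \cref{thm:forest}) and then Lagrange inversion.
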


\begin{remark} 
	The numbers of contracted plabic trees of type $(k,n)$ refine the \emph{large Schroeder numbers}, and appear as \cite[A175124]{OEIS}.  By \cite[Section 12]{Parisi:2021oql}, these are also equinumerous with the separable permutations on $[n-1]$ with $k-1$ descents. $C^{\lrangle{-1}}(x,y,q)$ coincides with the generating function given in \cite[Example 1.6.7]{drake2008inversion} with $(\alpha,\beta)=(q,yq)$ or $(yq,q)$. The polynomial enumerating separable permutations according to their descents was also studied in \cite{fu2018two}.
\end{remark}

\begin{remark}	
	\cref{thm:GF-plabictrees} implies that $\mathcal{P}_\text{tree}(x,y,q)$ and $\mathcal{P}_\text{forest}(x,y,q)$ are algebraic generating functions of degree $3$ and $4$, respectively, satisfy the following relations:
	\begin{align}
		\begin{autobreak}
			0 
			= 
			\mathcal{P}_\text{tree}^3+ x (y (q x-3)-3)\mathcal{P}_\text{tree}^2
			+ x^2 (3-y (q x (y+1)-3 y-5))\mathcal{P}_\text{tree}
			-x^3 (1+y (y (-q x+y+2)+2))\,,
		\end{autobreak}\\
		\begin{autobreak}
			0
			=
			q (x-1) x^2 y (x y-1)\mathcal{P}_\text{forest}^4
			+ (x (x (-x (y+1) (-q y+y^2+y+1)+y (-2 q+3 y+5)+3)-3 (y+1))+1)\mathcal{P}_\text{forest}^3
			+ (x^2 (y (q-3 y-5)-3)+6 x (y+1)-3)\mathcal{P}_\text{forest}^2
			-3 (x y+x-1)\mathcal{P}_\text{forest}
			-1\,.
		\end{autobreak}		
	\end{align}
\end{remark}

\begin{remark}
	We may as well set $q$ to $1$ in \eqref{eq:GF-plabictrees-1} and \eqref{eq:GF-plabictrees-2} because $\dim_{\mathcal{M}}(T)=n-1$ for any contracted plabic tree $T$ on $n$-leaves and so $[x^ny^kq^r]\mathcal{P}_\text{tree}(x,y,q) = \delta_{r,n-1}[x^ny^k]\mathcal{P}_\text{tree}(x,y,1)$.
\end{remark}

Special care is required to enumerate the \emph{contracted} plabic trees, as defined in \cref{def:contracted}. In particular, we need to ensure that the set of all subtrees with a fixed set of boundary vertices and with all internal vertices white, count as a single contribution; similarly for the set of all subtrees with a fixed set of boundary vertices and all internal vertices black.  Note that any tree with $n$ boundary vertices and all internal vertices white (respectively, black) has helicity $1$ (respectively, $n-1$). To this end, we define a statistic $e:\Z_{\ge 3}\to \R$ on internal vertices of a tree which depends only on their degree. It is defined so that the sum over all trees (with a fixed number of boundary vertices) weighted by $e(\deg(v))$ for each internal vertex $v$ equals 1.

\begin{lemma}\label{thm:equiv-c}
	Let $e:\Z_{\ge 3}\to \R$ be the function where $e(n)=(-1)^{n-1}$ for each $n\in\Z_{\ge3}$. Then the function $h:\Z_{\ge 3}\to\R$ given by
	\begin{align}\label{eq:equiv-g}
		h(n)=\sum_{T\in \mathcal{T}_n}\prod_{v\in\Verts_\text{int}(T)}e(\deg(v))\,,
	\end{align}
	has the property that $h(n)=1$.
\end{lemma}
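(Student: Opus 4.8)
The plan is to apply \cref{thm:tree} with the specific weight function $e(n) = (-1)^{n-1}$ and show that the resulting generating function $H_{\tree}(x)$ equals $x^2 + \sum_{n\ge 3} x^n = \frac{x^2}{1-x}$, so that every coefficient $h(n)$ for $n\ge 3$ is exactly $1$. By \cref{thm:tree}, this is equivalent to verifying that $\frac{1}{x}H_{\tree}(x)$ is the compositional inverse of $x - \frac{1}{x}F(x)$, where $F(x) = \sum_{n\ge 3} e(n)x^n = \sum_{n\ge 3}(-1)^{n-1}x^n$.

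First I would compute $F(x)$ explicitly as a rational function: $F(x) = \sum_{n\ge 3}(-1)^{n-1}x^n = \frac{x^3}{1+x}$ (summing the geometric-type series). Then $\frac{1}{x}F(x) = \frac{x^2}{1+x}$, so the function to be inverted is $\phi(x) := x - \frac{x^2}{1+x} = \frac{x(1+x) - x^2}{1+x} = \frac{x}{1+x}$. Next I would compute the target $\frac{1}{x}H_{\tree}(x)$ under the hypothesis $h(n)=1$ for all $n\ge 3$; since $H_{\tree}(x) = x^2 + \sum_{n\ge3}x^n = x^2 + \frac{x^3}{1-x} = \frac{x^2}{1-x}$, we get $\frac{1}{x}H_{\tree}(x) = \frac{x}{1-x}$. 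The remaining task is the elementary check that $\phi(x) = \frac{x}{1+x}$ and $\psi(x) := \frac{x}{1-x}$ are compositional inverses: indeed $\phi(\psi(x)) = \frac{x/(1-x)}{1 + x/(1-x)} = \frac{x/(1-x)}{1/(1-x)} = x$, and symmetrically $\psi(\phi(x)) = x$. This confirms $\frac{1}{x}H_{\tree}(x) = \phi^{\lrangle{-1}}(x)$, which is exactly the conclusion of \cref{thm:tree}, forcing $h(n)=1$ for all $n\ge 3$.

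Alternatively, and perhaps more cleanly for exposition, I would run the logic in the forward direction: set $f = e$ in \cref{thm:tree}, compute $F(x) = \frac{x^3}{1+x}$, deduce that $\frac{1}{x}H_{\tree}(x) = \left(x - \frac{x^2}{1+x}\right)^{\lrangle{-1}} = \left(\frac{x}{1+x}\right)^{\lrangle{-1}} = \frac{x}{1-x}$, and hence $H_{\tree}(x) = \frac{x^2}{1-x} = x^2 + x^3 + x^4 + \cdots$. Reading off coefficients gives $h(n) = 1$ for every $n\ge 3$ directly, with no need to guess the answer in advance.

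The computation is entirely routine; there is no genuine obstacle. The only point requiring mild care is bookkeeping with the index ranges and signs: one must correctly identify $\sum_{n\ge3}(-1)^{n-1}x^n$ as $\frac{x^3}{1+x}$ (checking the leading sign, since the $n=3$ term is $(-1)^2 x^3 = +x^3$), and one must remember that \cref{thm:tree} places the seed term $x^2$ in $H_{\tree}$ accounting for the unique tree on two leaves. Everything then collapses because the Möbius-like alternating weights are precisely tuned so that $x - \frac{1}{x}F(x)$ becomes the Möbius transformation $\frac{x}{1+x}$, whose inverse $\frac{x}{1-x}$ is the generating function with all coefficients equal to $1$.
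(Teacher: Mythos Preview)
Your proof is correct and follows essentially the same approach as the paper: apply \cref{thm:tree} with $f=e$, compute $F(x)=\frac{x^3}{1+x}$, observe that $x-\frac{1}{x}F(x)=\frac{x}{1+x}$ has compositional inverse $\frac{x}{1-x}$, and read off $H(x)=\frac{x^2}{1-x}$ to conclude $h(n)=1$. Your ``forward direction'' alternative is in fact exactly the paper's argument.
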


\begin{proof}[Proof of \cref{thm:equiv-c}] 
	Notice that \eqref{eq:equiv-g} in \cref{thm:equiv-c} is precisely \eqref{eq:tree-h} in \cref{thm:tree} with $f=e$. If we let $H(x)=x^2+\sum_{n\ge 3}h(n)x^n$ and $F(x)=\sum_{n\ge3}e(n)x^n=\frac{x^3}{1+x}$, then by \cref{thm:tree} we have that 
	\begin{align*}
		\frac{1}{x}H(x)=\left(x-\frac{1}{x}F(x)\right)^{\lrangle{-1}} = \left(\frac{x}{1+x}\right)^{\lrangle{-1}}=\frac{x}{1-x}\,,
	\end{align*} 
	which means that $H(x) = \frac{x^2}{1-x} = x^2+\sum_{n\ge 3}x^n$ and $h(n)=1$.
\end{proof}

\begin{proof}[Proof of \cref{thm:GF-trees}] 
	To simplify notation, we will suppress any functional dependence on the variables $y$ and $q$, and write $\mathcal{P}_\text{tree}(x)$ for $\mathcal{P}_\text{tree}(x,y,q)$.
	
	Given a plabic tree $T$ of type $(k,n)$ with $n\ge3$ and mom-dimension $r$, we can express $x^ny^kq^r$ using \eqref{eq:helicity-T} and \eqref{eq:mom-dim-T} as 
	\begin{align}\label{eq:GF-plabictrees-contrib}
		x^ny^kq^r=x^nyq\prod_{v\in\Verts_\text{int}(T)}y^{h(v)-1}q^{m(v)-1}\,,
	\end{align}
	where $q^{m(v)-1}=q^{\deg(v)-2}$ for internal vertices. Comparing the right hand side of the equality in \eqref{eq:GF-plabictrees-contrib} with \eqref{eq:tree-h} motivates the following definition of the function $f$, whose value $f(d)$ encodes the two ways of coloring an internal vertex of degree $d$ (while keeping track of both $h(v)$ and $m(v)$).
	
	We define $f:\Z_{\ge 3}\to\QQ(y,q)$ by
	\begin{align*}
		f(d)=q^{d-2} e(d)(1+y^{d-2})\,,
	\end{align*}
	where $e(d)=(-1)^{d-1}$ is the statistic introduced in \cref{thm:equiv-c}. Note that the prefactor $e(d)$ is included so
	that all refinement-equivalent plabic (sub-)trees with only white vertices (or with only black vertices), are counted as a single contribution. 
	
	Let $h:\Z_{\ge 3}\to\QQ(y,q)$ be defined in terms of $f$ as in \cref{thm:tree}. Let $F(x)=\sum_{d\ge3}f(d)x^d$ and $H(x)=x^2+\sum_{n\ge3}h(n)x^n$. Then we can concretely compute 
	\begin{equation}
		F(x)=x^3 q\left(\frac{1}{1+xq}+\frac{y}{1+xyq}\right),
	\end{equation} and 
	$H(x)$ may be computed in terms of $F(x)$ using \cref{thm:tree}. Finally, $\mathcal{P}_\text{tree}(x)$ is given by
	\begin{align*}
		\mathcal{P}_\text{tree}(x)=x(1+y)+yq H(x)\,.
	\end{align*}
	We add the term $x(1+y)$ so that $[x^1]\mathcal{P}_\text{tree}(x)=(1+y)$ accounts for the two plabic trees with a single boundary vertex. Setting $C(x)=x-\frac{1}{x}F(x)$, the result for $\mathcal{P}_\text{tree}(x,y,q)$ follows.
	
	The first statement about $\mathcal{P}_\text{forest}(x,y,q)$ now follows from \cref{thm:forest}. The second statement \eqref{eq:GF-plabicforest-2} follows from \eqref{eq:GF-plabicforest-1} by applying Lagrange Inversion (\cref{thm:LIF}). Explicitly, if we write $R(x) = \frac{x}{1+\mathcal{G}_\text{tree}(x,y,q)}$, then 
	\begin{align*}
		(n+1) [x^{n+1}] R^{\lrangle{-1}}(x) = [x^{n}] \left(\frac{x}{R(x)}\right)^{n+1} = [x^{n}] (1+\mathcal{P}_\text{tree}(x,y,q))^{n+1}.
	\end{align*}
\end{proof}

We now return to the case of contracted Grassmannian trees and forests.
\begin{theorem}\label{thm:GF-trees}
	The number of contracted Grassmannian trees of type $(k,n)$ with mom-dimension $r$ is given by $[x^ny^kq^r]\mathcal{G}_\text{tree}(x,y,q)$ where 
	\begin{align}
		\label{eq:GF-trees-1}\mathcal{G}_\text{tree}(x,y,q)&=x\left(1+y+yq\,C^{\lrangle{-1}}(x,y,q)\right),\text{ with }\\
		\label{eq:GF-trees-2}C(x,y,q)&=\frac{x(1-x (1+y) q^2-x^2 y q^2 (1+q-q^2)-x^4 y^2 q^5 (1+q))}{(1+xq)(1+xyq)(1-xq^2)(1-xyq^2)},
	\end{align}
	and the compositional inverse is with respect to the variable $x$.
	
	The number of contracted Grassmannian forests of type $(k,n)$ with mom-dimension $r$ is given by $[x^ny^kq^r]\mathcal{G}_\text{forest}(x,y,q)$ where
	\begin{align}\label{eq:GF-forest-1}
		x \mathcal{G}_\text{forest}(x,y,q)=\left(\frac{x}{1+\mathcal{G}_\text{tree}(x,y,q)}\right)^{\lrangle{-1}},
	\end{align}
	and the compositional inverse is  with respect to the variable $x$. Equivalently, 
	\begin{align}\label{eq:GF-forest-2}
		[x^n] \mathcal{G}_\text{forest}(x,y,q) = \frac{1}{n+1} [x^n] 
		\left( 1+\mathcal{G}_\text{tree}(x,y,q) \right)^{n+1}.
	\end{align}
\end{theorem}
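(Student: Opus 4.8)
The plan is to imitate the proof of \cref{thm:GF-plabictrees}, feeding an appropriate weight function into \cref{thm:tree} and then \cref{thm:forest}, the one new feature being that an internal vertex of a Grassmannian tree may now be \emph{generic} as well as white or black. As in \eqref{eq:GF-plabictrees-contrib}, for a contracted Grassmannian tree $T$ of type $(k,n)$ with $n\ge 3$ and mom-dimension $r$ one has
\begin{align*}
	x^n y^k q^r = x^n y q \prod_{v\in\Verts_\text{int}(T)} y^{h(v)-1} q^{m(v)-1},
\end{align*}
so I would read off the generating function by assigning to each internal vertex of degree $d$ the polynomial that records all of its admissible decorations, weighted by $y^{h(v)-1}q^{m(v)-1}$. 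Using \cref{def:mdim}, a white vertex ($h=1$, $m=d-1$) contributes $q^{d-2}$, a black vertex ($h=d-1$, $m=d-1$) contributes $y^{d-2}q^{d-2}$, and a generic vertex ($2\le h\le d-2$, $m=2d-4$) contributes $q^{2d-5}y^{h-1}$.

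The crucial point is enforcing the contracted condition. Since the vertex contraction moves of \cref{def:contracted} merge only adjacent white (or adjacent black) vertices, I would attach the sign $e(d)=(-1)^{d-1}$ of \cref{thm:equiv-c} to the white and black contributions but \emph{not} to the generic ones, setting
\begin{align*}
	f(d) = e(d)\,q^{d-2}\bigl(1+y^{d-2}\bigr) + q^{2d-5}\sum_{h=2}^{d-2} y^{h-1} \qquad (d\ge 3).
\end{align*}
The sign plays exactly the role it does in \cref{thm:equiv-c}: because helicity and, by \cref{lem:invariant}, mom-dimension are refinement invariants, each maximal monochromatic region of degree $d$ carries the same $y,q$-weight across all of its refinements, so the $e$-weighted sum over those refinements collapses to a single contribution by \cref{thm:equiv-c}, whereas generic vertices are never refined and so need no sign. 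Summing $F(x)=\sum_{d\ge 3} f(d)x^d$ reproduces the two white/black geometric series of the plabic case and adds a generic tail, which a short computation puts into closed form:
\begin{align*}
	F(x) = \frac{qx^3}{1+qx} + \frac{qyx^3}{1+qyx} + \frac{q^3 x^4 y}{(1-q^2 x)(1-q^2 xy)}.
\end{align*}
Setting $C(x)=x-\tfrac1x F(x)$ and clearing the denominator $(1+qx)(1+qyx)(1-q^2x)(1-q^2xy)$ should give \eqref{eq:GF-trees-2}. \cref{thm:tree} then yields $\tfrac1x H(x)=C^{\lrangle{-1}}(x)$, and hence $\mathcal{G}_\text{tree}(x)=x(1+y)+yq\,H(x)=x\bigl(1+y+yq\,C^{\lrangle{-1}}(x)\bigr)$, the term $x(1+y)$ recording the two Grassmannian trees on a single boundary vertex (the white and black boundary leaves), exactly as in the plabic case.

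For the forest statement I would argue verbatim as in \cref{thm:GF-plabictrees}. The connected components of a Grassmannian forest embedded in the disk form a noncrossing partition of its boundary vertices, so \cref{thm:forest} (equivalently \cref{thm:speicher} with block-structure generating function $1+\mathcal{G}_\text{tree}(x)$) yields \eqref{eq:GF-forest-1}. The equivalent form \eqref{eq:GF-forest-2} then follows from \eqref{eq:GF-forest-1} by Lagrange inversion (\cref{thm:LIF}) applied to $R(x)=x/(1+\mathcal{G}_\text{tree}(x))$, giving $(n+1)[x^{n+1}]R^{\lrangle{-1}}(x)=[x^n](1+\mathcal{G}_\text{tree}(x))^{n+1}$, precisely as for $\mathcal{P}_\text{forest}$.

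The main obstacle is conceptual rather than computational: one must justify that attaching $e(d)$ to the white and black parts of $f(d)$ but not to the generic part genuinely enforces the contracted condition. This amounts to checking that the contraction moves never involve generic vertices and that the $y,q$-weight of a monochromatic region is constant over its refinements, so that this weight factors out of the $e$-weighted sum of \cref{thm:equiv-c}; the latter is exactly where \cref{lem:invariant} and \eqref{eq:helicity-T} enter. The remaining work---summing the generic series and simplifying $x-\tfrac1xF(x)$ to the rational function \eqref{eq:GF-trees-2}---is routine but heavier than in \cref{thm:GF-plabictrees} because of the additional factors $(1-q^2x)(1-q^2xy)$ in the denominator.
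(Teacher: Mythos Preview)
Your proposal is correct and follows essentially the same route as the paper's proof: the same weight function $f(d)=e(d)q^{d-2}(1+y^{d-2})+q^{2d-5}\sum_{h=2}^{d-2}y^{h-1}$, the same closed form for $F(x)$, and the same passage through \cref{thm:tree} and \cref{thm:forest}. Your discussion of why $e(d)$ is attached only to the non-generic terms is slightly more explicit than the paper's, but the argument is the same.
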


Our proof is quite analogous to the previous one.

\begin{proof}[Proof of \cref{thm:GF-trees}] 	
	As before, given a Grassmannian tree $T$ of type $(k,n)$ with $n\ge3$ and mom-dimension $r$, we can express $x^ny^kq^r$ using \eqref{eq:helicity-T} and \eqref{eq:mom-dim-T} as 
	\begin{align}\label{eq:GF-trees-contrib}
		x^ny^kq^r=x^nyq\prod_{v\in\Verts_\text{int}(T)}y^{h(v)-1}q^{m(v)-1}\,,
	\end{align}
	where $q^{m(v)-1}=q^{2\deg(v)-5}$ for generic vertices, while $q^{m(v)-1}=q^{\deg(v)-2}$ for non-generic vertices. Comparing the right hand side of the equality in \eqref{eq:GF-trees-contrib} with \eqref{eq:tree-h} motivates the following definition of the function $f$, whose value $f(d)$ encodes the different ways to decorate an internal vertex of degree $d$ (while keeping track of both $h(v)$ and $m(v)$).
	
	We define $f:\Z_{\ge 3}\to\QQ(y,q)$ by
	\begin{align*}
		f(d)=q^{2d-5}\sum_{k=2}^{d-2}y^{k-1}+q^{d-2} e(d)(1+y^{d-2})\,,
	\end{align*}
	where $e(d)=(-1)^{d-1}$. Note that the expression $q^{2d-5} \sum_{k=2}^{d-2}y^{k-1}$ in $f(d)$ enumerates the $d-3$ choices of helicities for a generic internal degree $d$ vertex $v$, while the expression $q^{d-2} e(d)(1+y^{d-2})$ enumerates the remaining two non-generic choices. The prefactor $e(d)$ is included so that all refinement-equivalent Grassmannian trees with only white vertices (or with only black vertices), are counted as a single contribution. 
	
	Let $h:\Z_{\ge 3}\to\QQ(y,q)$ be defined in terms of $f$ as in \cref{thm:tree}. Let $F(x)=\sum_{d\ge3}f(d)x^d$ and $H(x)=x^2+\sum_{n\ge3}h(n)x^n$. Then we can concretely compute 
	\begin{equation}
		F(x)=x^3 q\left(\frac{1}{1+xq}+\frac{y}{1+xyq}+\frac{xyq^2}{(1-x q^2)(1-xyq^2)}\right),
	\end{equation} and 
	$H(x)$ may be computed in terms of $F(x)$ using \cref{thm:tree}. Finally, $\mathcal{G}_\text{tree}(x)$ is given by $\mathcal{G}_\text{tree}(x)=x(1+y)+yq H(x)$. Note that we added the term $x(1+y)$ to account for the two Grassmannian trees with a single boundary vertex. Setting $C(x)=x-\frac{1}{x}F(x)$, the result for $\mathcal{G}_\text{tree}(x,y,q)$ follows. The proofs of the statements about $\mathcal{G}_\text{forest}(x,y,q)$ are exactly the same as in the previous proof.
\end{proof}

See \cref{tbl:G-forest} for examples of $[x^ny^k]\mathcal{G}_\text{forest}(x,y,q)$ for $n=4$ through $12$ and for  $2\le k\le \lfloor\frac{n}{2}\rfloor$.

\begin{remark}	
	It follows from \cref{thm:GF-trees} that $\mathcal{G}_\text{tree}(x,y,q)$ (resp., $\mathcal{G}_\text{forest}(x,y,q)$) is an algebraic generating function of degree $5$ (resp., $6$) satisfying \eqref{eq:GF-tree-rel} (resp., \eqref{eq:GF-forest-rel}).
\end{remark}


\section{Separable permutations and Grassmannian tree permutations}\label{sec:perm} 

In this section we explain how the results of the previous section are related to the enumeration of permutations. We start by defining the decorated trip permutation associated to a reduced Grassmannian graph \cite[Definition 4.5]{Postnikov:2018jfq}. Decorated trip permutations are in bijection with refinement-equivalence classes of reduced Grassmannian graphs. We also note that the Grassmannian graphs that we are interested in in this paper (Grassmannian forests) are automatically reduced.

\begin{definition}[{\cite[Definition 4.5]{Postnikov:2018jfq}}]\label{def:rules}
	A one-way \emph{trip} $\alpha$ in a Grassmannian graph $G$ is a directed walk along edges of $G$ that starts and ends at some boundary vertices, satisfying the following \emph{rules-of-the-road}: For each internal vertex $v\in \Verts_\text{int}(G)$ with adjacent edges labelled $a_1,\ldots,a_d$ in the clockwise order, where $d=\deg(v)$, if $\alpha$ enters $v$ through the edge $a_i$, it leaves $v$ through the edge $a_{j}$, where $j=i+h(v) \pmod d$.	
	
	A \emph{decorated permutation} on $n$ letters is a permutation $w:[n]\to [n]$ in which fixed points are coloured either black or white (and consequently	denoted $w(i)=\underline{i}$ and $w(i)=\overline{i}$).
	
	The decorated permutation $w_G$ of a reduced Grassmannian graph  is defined as follows:
	\begin{enumerate}
		\item If the trip starting at the boundary vertex $b_i$ ends at the boundary vertex $b_j$ for $j\neq i$, then $w_G(i) = j$. 
		\item If the trip starting at boundary vertex $b_i$ ends at $b_i$, then either $w_G(i) = \underline{i}$ or $w_G(i) = \overline{i}$, based on whether the leaf $v$ incident to $b_i$ has $h(v)=0$ or $h(v)=1$.
	\end{enumerate}
	We define an \emph{antiexcedance} of a decorated permutation $w:[n]\to[n]$ is an element $i\in [n]$ such that either $w^{-1}(i)>i$ or $w(i) = \overline{i}$.  
\end{definition}

The helicity $h(G)$ of a reduced Grassmannian graph is equal to the number of {antiexcedances} of the decorated trip permutation $w_G$ \cite{Postnikov:2018jfq}. Therefore we will also denote the number of antiexcedances of this permutation as $h(w_G):=h(G)$.

As an example, the decorated trip permutation associated to both graphs in \cref{fig:G-tree-9} is $(2, 3, 5, 6, 8, 1, 9, 4, 7)$, which has three antiexcedances; this corresponds to the fact that the graphs have helicity $3$.

Since each refinement-equivalence class of Grassmannian forests has a uniquely associated decorated trip permutation, \cref{lem:invariant} allows us to define the mom-dimension of each decorated permutation $w_F$ associated to a Grassmannian forest $F$, that is, we define $\dim_\M(w_F):=\dim_\M(F)$.

Let us first interpret \cref{thm:GF-plabictrees} in terms of permutations. One option is to interpret \cref{thm:GF-plabictrees}  as counting the trip permutations of plabic trees, enumerating them according to $n$, number of antiexcedances $k$, and mom-dimension. Another option is to use the results of \cite[Section 12]{Parisi:2021oql} that the contracted plabic trees of type $(k,n)$ are in bijection with the \emph{separable permutations} on $n-1$ letters with $k-1$ descents, where a separable permutation can be defined as a permutation which avoids the patterns $2413$ and $3142$ \cite{bose1998pattern,kitaev2011patterns}. Therefore if we specialize $q=1$ in \cref{thm:GF-plabictrees}, we have the following.
\begin{corollary}
	The number of separable permutations on $n-1$ letters with $k-1$ descents is given by 
	$$[x^n y^k]  
	\mathcal{P}_\text{tree}(x,y,q=1) =
	[x^n y^k] 
	\left(x+xy+xy\,
	{\left(\frac{x(1- x^2 y)}{(1+x)(1+xy)}\right)}^{\lrangle{-1}} \right) .$$
\end{corollary}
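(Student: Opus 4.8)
The plan is to combine two ingredients that are already available: the explicit generating function of \cref{thm:GF-plabictrees}, and the bijection between contracted plabic trees and separable permutations recorded in \cite[Section 12]{Parisi:2021oql}. Since the statement tracks only the two statistics $n$ and $k$ (and not the mom-dimension $r$), the variable $q$ plays no essential role, which is exactly why the specialization $q=1$ is harmless.

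First I would recall from \cref{thm:GF-plabictrees} that the number of contracted plabic trees of type $(k,n)$ with mom-dimension $r$ equals $[x^n y^k q^r]\mathcal{P}_\text{tree}(x,y,q)$. As noted in the remark following that theorem, every contracted plabic tree $T$ on $n$ boundary vertices satisfies $\dim_\M(T) = n-1$ (this follows from \eqref{eq:mom-dim-T} and \eqref{eq:multiplicity-T}, since a plabic tree has no generic vertices, so $m(v)=\deg(v)-1$ and $\dim_\M(T) = 1 + \sum_{v}(\deg(v)-2) = n-1$). Hence the coefficient is supported at the single value $r = n-1$, and summing over all $r$ — that is, setting $q=1$ — simply recovers the total count. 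Thus $[x^n y^k]\mathcal{P}_\text{tree}(x,y,1)$ is precisely the number of contracted plabic trees of type $(k,n)$.

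Next I would invoke the bijection of \cite[Section 12]{Parisi:2021oql}, which identifies contracted plabic trees of type $(k,n)$ with separable permutations on $n-1$ letters having $k-1$ descents. Combining this with the previous step yields that the number of such separable permutations equals $[x^n y^k]\mathcal{P}_\text{tree}(x,y,1)$, as desired.

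Finally, it remains only to make the specialization explicit. Substituting $q=1$ into \eqref{eq:GF-plabictrees-2} gives $C(x,y,1) = \tfrac{x(1-x^2 y)}{(1+x)(1+xy)}$, and substituting this into \eqref{eq:GF-plabictrees-1} gives $\mathcal{P}_\text{tree}(x,y,1) = x\bigl(1 + y + y\,C^{\lrangle{-1}}(x,y,1)\bigr) = x + xy + xy\,\bigl(\tfrac{x(1- x^2 y)}{(1+x)(1+xy)}\bigr)^{\lrangle{-1}}$, matching the displayed formula. There is no genuine obstacle here: all of the combinatorial content is carried by the cited bijection and by \cref{thm:GF-plabictrees}, so the only things to verify are the justification that $q=1$ collapses the (trivial) mom-dimension grading and the purely algebraic substitution into \eqref{eq:GF-plabictrees-1}–\eqref{eq:GF-plabictrees-2}, noting that the compositional inverse is taken in the variable $x$ and is therefore unaffected by fixing $q=1$.
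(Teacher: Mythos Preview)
Your proposal is correct and matches the paper's own approach: the corollary is stated in the paper as an immediate consequence of \cref{thm:GF-plabictrees} together with the bijection from \cite[Section 12]{Parisi:2021oql}, after specializing $q=1$ (the paper notes in a remark that $\dim_\M(T)=n-1$ for any contracted plabic tree, exactly as you justify). Your write-up simply makes the substitution and the logic explicit, but the argument is the same.
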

We note that the generating function above is essentially the same one that appears in \cite[Example 1.6.7]{drake2008inversion}. 

One can also define separable permutations as the permutations that can be built by applying \emph{direct sums} and \emph{skew sums}, starting	from the trivial permutation $1$, where the direct sum operation is defined as follows.
\begin{definition}
	The \emph{direct sum} of two permutations $\sigma$ on $n_\sigma$ letters and $\tau$ on $n_\tau$ letters is a permutation $\sigma\oplus\tau$ on $n_\sigma+n_\tau$ letters defined as follows: 
	\begin{align}
		(\sigma\oplus\tau)(i)\coloneqq\left\{
		\begin{array}{ll}
			\sigma(i) & \text{if $1\le i\le n_\sigma$,}\\
			\tau(i-n_\tau)+n_\tau & \text{if $n_\sigma+1\le i\le n_\sigma+n_\tau$.}
		\end{array}
		\right.
	\end{align}
	
	This operation is illustrated in \cref{fig:permutation-operations} (left).
\end{definition}
\begin{figure}[h]
	\centering
	\begin{minipage}{.5\textwidth}
		\centering
		\includegraphics[scale=0.4]{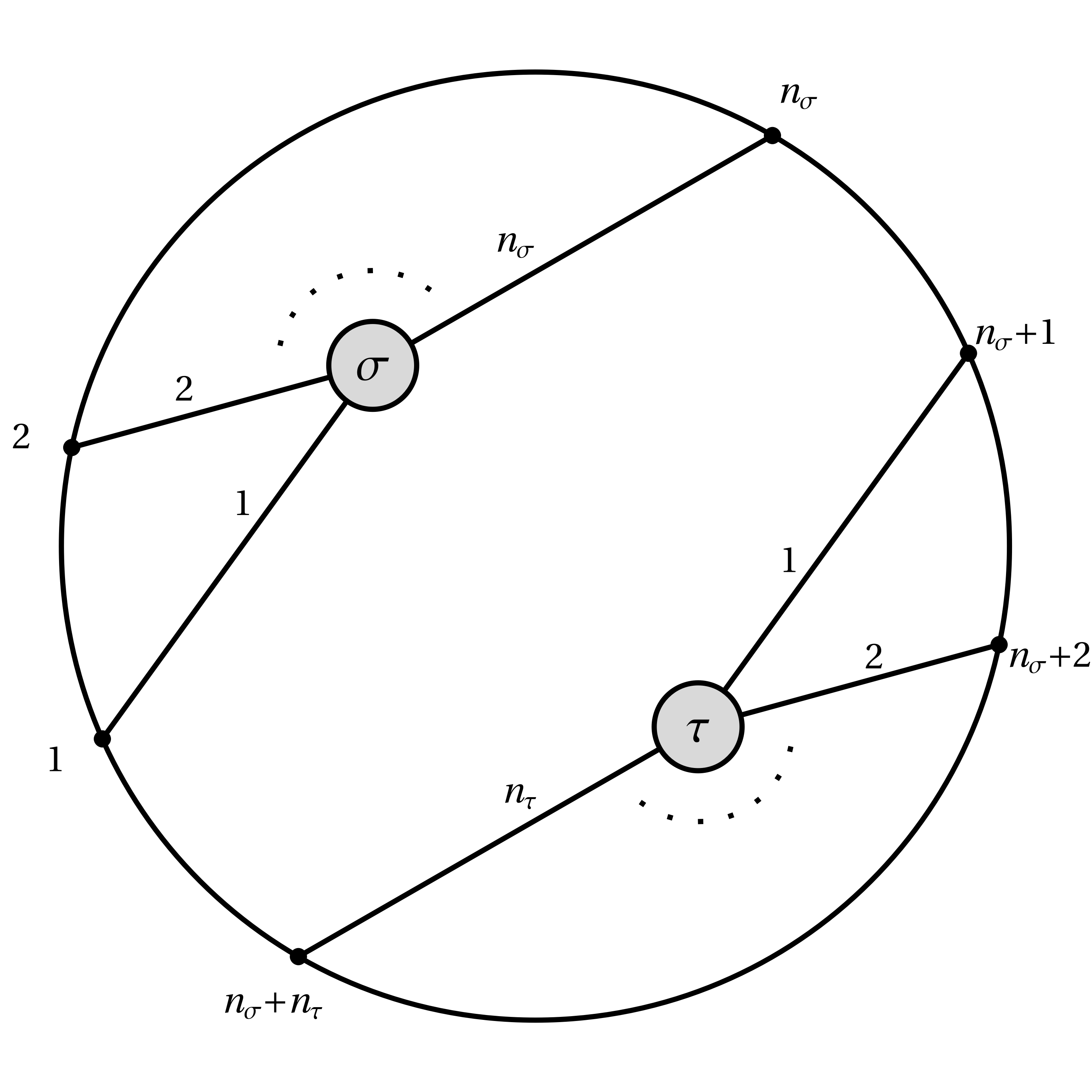}
	\end{minipage}%
	\begin{minipage}{.5\textwidth}
		\centering
		\includegraphics[scale=0.4]{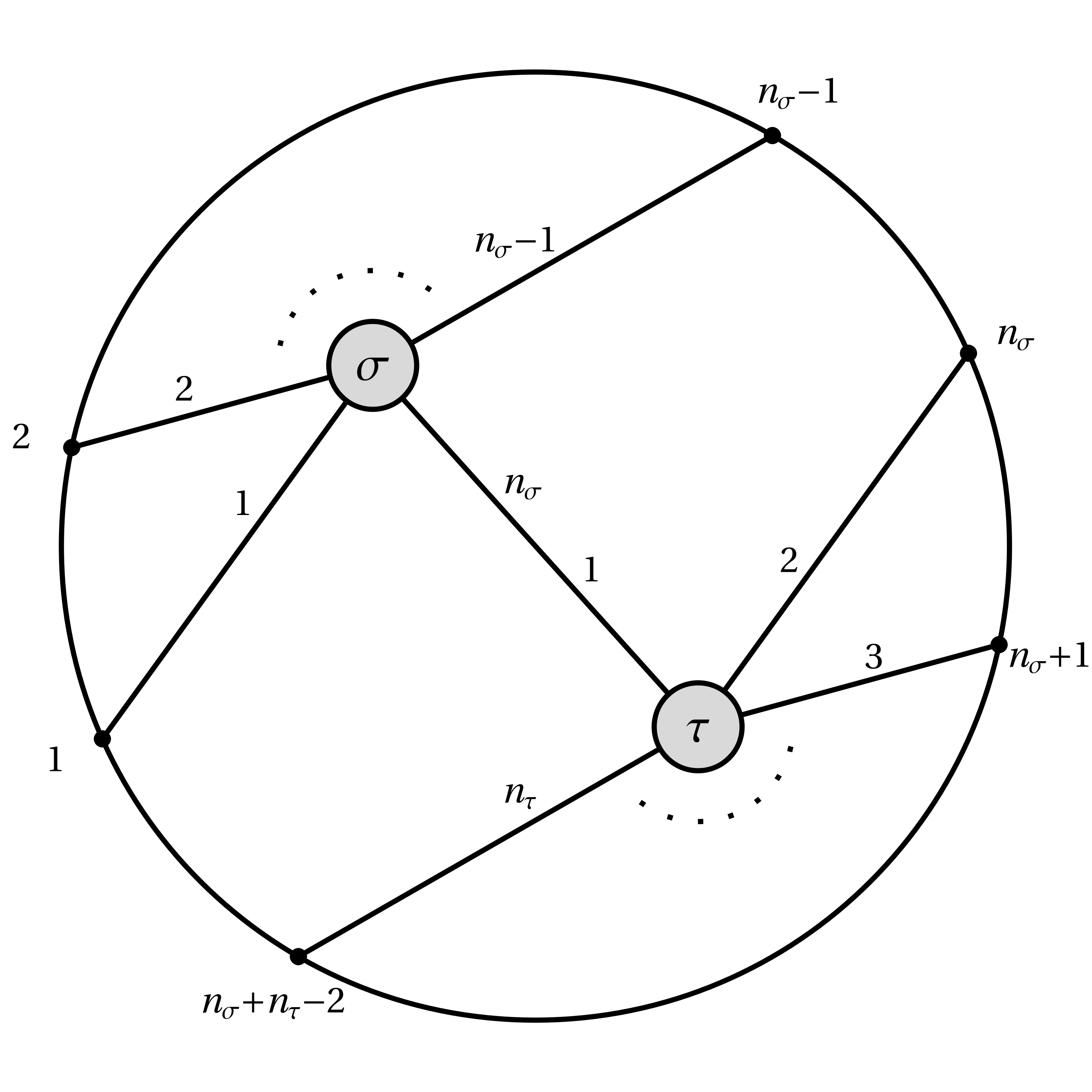}
	\end{minipage}
	\caption{Direct sum (left) and amalgamation (right) of two permutations $\sigma$ on $n_\sigma$ letters and $\tau$ on $n_\tau$ letters.}
	\label{fig:permutation-operations}
\end{figure}

Let us now interpret \cref{thm:GF-trees} in terms of permutations. We first need to describe the decorated permutations that arise from contracted Grassmannian trees and graphs.

For $n\ge2$ and $1\le k\le n-1$, let
\begin{align*}
	\pi_{k,n}\coloneqq (k+1,\ldots,n,1,2,\ldots,k)=\begin{pmatrix}
		1 & 2 & \cdots & n-k & n-k+1 & \cdots & n\\
		\downarrow & \downarrow & & \downarrow & \downarrow & & \downarrow\\
		k+1 & k+2 & \cdots & n & 1 & \cdots & k 
	\end{pmatrix},
\end{align*}
and for $n=1$, let $\pi_{0,1}\coloneqq(\underline{1})$ and $\pi_{1,1}\coloneqq(\overline{1})$ be the two decorated permutations on one letter decorated black and white, respectively.

\begin{definition}
	Given permutations $\sigma$ on $n_\sigma$ letters and $\tau$ on $n_\tau $ letters, where $n_\sigma,n_\tau\ge2$, the \emph{amalgamation} $\sigma\star\tau$ is a permutation on $n_\sigma+n_\tau-2$ letters defined as follows:
	\begin{align}
		(\sigma\star\tau)(i)\coloneqq\left\{
		\begin{array}{ll}
			\sigma(i) & \text{if $1\le i\le n_\sigma-1$ and $\sigma(i)\ne n_\sigma$,}\\
			\tau(1) & \text{if $1\le i\le n_\sigma-1$ and $\sigma(i)=n_\sigma$,}\\
			\tau(i-n_\sigma+2) & \text{if $n_\sigma\le i \le n_\sigma+n_\tau-2$ and $\tau(i-n_\sigma+2)\ne 1$,}\\
			\sigma(n_\sigma) & \text{if $n_\sigma\le i \le n_\sigma+n_\tau-2$ and $\tau(i-n_\sigma+2)= 1$.}
		\end{array}
		\right.
	\end{align}
	This operation is illustrated in \cref{fig:permutation-operations} (right).
\end{definition}

\begin{definition}
	The \emph{cyclic rotation} of a permutation $\pi=(\pi_1,\ldots,\pi_{n-1},\pi_n)$ is the permutation $\text{cyc}(\pi)$ defined by $(\text{cyc}(\pi))(i)\coloneqq\pi(i-1)+1$, i.e.\ $\text{cyc}(\pi)=(\pi_n+1,\pi_1+1,\ldots,\pi_{n-1}+1)$ (modulo $n$). 
\end{definition}
The above operation corresponds to taking a decorated permutation arising from a Grassmannian graph on vertices $1,2,\dots,n$, then adding $1$ (modulo $n$) to each boundary vertex and computing the new decorated permutation.

\begin{definition}
	A \emph{Grassmannian tree permutation} is a permutation built by amalgamating permutations of the form $\pi_{k,n}$ (for $n \geq 3$) and possibly applying cyclic rotations. We also consider the permutations $(1)$ and $(2,1)$ on one and two letters, respectively, to be Grassmannian tree permutations.
\end{definition}

The following statement is easy to verify from the definitions.
\begin{proposition}
	Grassmannian tree permutations are precisely the decorated permutations obtained from Grassmannian trees by applying the map from \cref{def:rules}.
\end{proposition}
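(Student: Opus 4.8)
The plan is to establish the two set inclusions separately, exploiting the fact that every Grassmannian tree decomposes into its internal vertices glued along internal edges, and that the two operations in the definition of a Grassmannian tree permutation---amalgamation and cyclic rotation---have transparent meaning for the rules-of-the-road map $T \mapsto w_T$ of \cref{def:rules}.

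The first ingredient is a base-case computation. If $T$ is the star-shaped Grassmannian tree with a single internal vertex $v$ of type $(k,d)$, joined to $d$ boundary vertices, then applying the rules-of-the-road at $v$---a trip entering through edge $a_i$ leaves through $a_j$ with $j \equiv i+k \pmod{d}$---shows immediately that $w_T = \pi_{k,d}$. The trees with two boundary vertices (a single edge) and with one boundary vertex (a single boundary leaf of helicity $0$ or $1$) likewise yield, by direct inspection, the base permutation $(2,1)$ and the two single-letter decorated permutations $\pi_{0,1},\pi_{1,1}$.

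The crux is a \emph{gluing lemma}: if a Grassmannian tree $T$ is obtained from Grassmannian trees $T_1$ and $T_2$ by identifying the boundary edge at vertex $n_\sigma$ of $T_1$ with the boundary edge at vertex $1$ of $T_2$, so that these two boundary edges fuse into a single internal edge of $T$, then $w_T = w_{T_1}\star w_{T_2}$. I would verify this against the piecewise definition of $\star$: since cutting an internal edge of a planar tree splits the disk into two disks whose boundary vertices inherit the clockwise order and whose local edge-labels are unchanged, a trip of $T$ behaves exactly as in $T_1$ (resp.\ $T_2$) until it reaches the fused edge. At that point---precisely the cases $\sigma(i)=n_\sigma$ and $\tau(i-n_\sigma+2)=1$ in the definition of amalgamation---it crosses into the other tree and resumes there from the former boundary vertex, namely along $\tau(1)$ or $\sigma(n_\sigma)$. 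Matching these two crossing cases to the two directions in which a trip can traverse the new internal edge is the heart of the argument.

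Granting the gluing lemma, the forward inclusion follows by induction on $|\Verts_\text{int}(T)|$. For $T$ with at least two internal vertices, choose an internal edge and cut it to obtain smaller Grassmannian trees $T_1,T_2$ (each endpoint of the cut keeps its helicity and degree, acquiring a boundary edge in place of the old internal one, so each piece is again a Grassmannian tree). After applying a cyclic rotation to each piece so that the cut sits at position $n_\sigma$ of $T_1$ and position $1$ of $T_2$, the gluing lemma gives that $w_T$ is a cyclic rotation of $w_{T_1}\star w_{T_2}$, which is a Grassmannian tree permutation by the inductive hypothesis. The reverse inclusion runs the same induction backwards: each $\pi_{k,n}$ is realized by a star, each $\star$ by a gluing, and each cyclic rotation by the boundary relabeling described after the definition of $\text{cyc}$, so every Grassmannian tree permutation is the trip permutation of an explicitly reconstructed tree. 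The step I expect to be the main obstacle is the bookkeeping in the gluing lemma---confirming that the two crossing cases of the amalgamation formula correspond exactly to the two orientations of a trip across the fused edge, and that cyclic rotation genuinely suffices to normalize an arbitrary internal edge into the position that $\star$ demands. Once the geometry of cutting and regluing planar trees is pinned down, both inclusions are routine inductions.
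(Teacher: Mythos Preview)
Your approach is correct and is essentially the argument the paper has in mind: the paper gives no proof at all beyond ``easy to verify from the definitions,'' so your base-case computation for star trees, your gluing lemma identifying amalgamation with the fusing of a boundary edge, and the two-way induction on the number of internal vertices are exactly the details a reader would supply. The only bookkeeping you flag---normalizing an arbitrary internal edge to the positions $n_\sigma$ and $1$ via cyclic rotation, and matching the two crossing cases of $\star$ to the two directions a trip traverses the fused edge---is routine once set up as you describe.
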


Now we obtain from \cref{thm:GF-trees} the following corollary.
\begin{corollary} 
	The number of Grassmannian tree permutations on $n$ letters with $k$ antiexcedances and momentum amplituhedron $r$ is given by $[x^ny^kq^r]\mathcal{G}_\text{tree}(x,y,q)$ where $\mathcal{G}_\text{tree}(x,y,q)$  is as in \cref{thm:GF-trees}.
\end{corollary}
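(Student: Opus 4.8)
The plan is to derive this corollary almost immediately from \cref{thm:GF-trees} by transporting the three statistics $(n,k,r)$ across the bijection between contracted Grassmannian trees and Grassmannian tree permutations. First I would invoke the preceding Proposition, which identifies Grassmannian tree permutations as exactly the decorated trip permutations $w_T$ obtained from Grassmannian trees $T$ via the rules-of-the-road map of \cref{def:rules}. Since decorated trip permutations are in bijection with refinement-equivalence classes of reduced Grassmannian graphs, and since Grassmannian trees are automatically reduced, this map descends to a bijection between refinement-equivalence classes of Grassmannian trees and Grassmannian tree permutations. Each such equivalence class contains a unique contracted representative, so Grassmannian tree permutations are in bijection with \emph{contracted} Grassmannian trees, which are precisely the objects enumerated by $\mathcal{G}_\text{tree}(x,y,q)$.

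Second, I would verify that this bijection matches the three statistics tracked by $\mathcal{G}_\text{tree}(x,y,q)$. The number of letters $n$ of the permutation equals the number of boundary vertices of $T$ by the very construction of the trip permutation. The number of antiexcedances $k$ equals the helicity $h(T)$, which is exactly Postnikov's result quoted above (that $h(G)$ equals the number of antiexcedances of $w_G$). Finally, the momentum amplituhedron statistic $r$ of the permutation is defined to be $\dim_\M(w_T):=\dim_\M(T)$, so it equals $\dim_\M(T)$; this transport is well-defined precisely because the mom-dimension is a refinement-equivalence invariant by \cref{lem:invariant}, and hence depends only on the permutation.

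With the bijection and the statistic-matching in place, the corollary follows by direct substitution into \cref{thm:GF-trees}: the number of Grassmannian tree permutations on $n$ letters with $k$ antiexcedances and mom-dimension $r$ equals the number of contracted Grassmannian trees of type $(k,n)$ with mom-dimension $r$, which is $[x^ny^kq^r]\mathcal{G}_\text{tree}(x,y,q)$.

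I expect the only genuine content, and hence the main obstacle, to be confirming that all three statistics correspond term-by-term under the bijection, together with checking that the small cases are not lost. Each ingredient is already available (the helicity--antiexcedance identity and the invariance from \cref{lem:invariant}), so the argument is largely a matter of assembling these correspondences and confirming that the boundary cases $n\le 2$—namely the two single-letter decorated permutations $\pi_{0,1},\pi_{1,1}$ and the permutation $(2,1)$—are accounted for by the explicit low-degree terms $x(1+y)$ built into $\mathcal{G}_\text{tree}(x,y,q)$ in \cref{thm:GF-trees}.
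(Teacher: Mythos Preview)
Your proposal is correct and follows exactly the route the paper intends: the paper presents this corollary as an immediate consequence of the preceding Proposition (identifying Grassmannian tree permutations with decorated trip permutations of Grassmannian trees) together with \cref{thm:GF-trees}, and your argument simply spells out the bijection and the statistic-matching that the paper leaves implicit. Your attention to the small cases $n\le 2$ is, if anything, more careful than the paper's own treatment.
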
 

One can also interpret \cref{thm:GF-trees} as enumerating ``Grassmannian forest permutations,'' which are built from Grassmannian tree permutations by using the direct sum and cyclic rotation operations.


\section{The Momentum Amplituhedron} \label{sec:momentum}

Remarkably, the totally nonnegative Grassmannian underpins the structure of scattering amplitudes in planar $\mathcal{N}=4$ supersymmetric Yang-Mills (SYM) theory \cite{Arkani-Hamed:2012zlh}. In 2013, the (tree) amplituhedron $\mathcal{A}_{n,k,m}$ was introduced as a geometric object which encodes tree-level scattering amplitudes for $\mathcal{N}=4$ SYM in momentum twistor space \cite{Arkani-Hamed:2013jha}. It is defined as the image of the totally nonnegative Grassmannian under a linear map induced by a positive matrix. Then in 2019, the momentum amplituhedron $\mathcal{M}_{n,k}=\mathcal{M}_{n,k,4}$ was discovered as an analogue of the amplituhedron but defined in momentum space \cite{Damgaard:2019ztj}. Like the amplituhedron, the momentum amplituhedron is defined as the image of the totally nonnegative Grassmannian under a particular map. Its boundary stratification was extensively studied in \cite{Ferro:2020lgp} and a (conjectural) description for its boundaries was given in terms of Grassmannian forests, although these objects were not known to the authors then. Moreover, each instance of the momentum amplituhedron computed in \cite{Ferro:2020lgp} was shown to have Euler characteristic $1$. In this section, we use the results of the previous section to construct the full generating function for the momentum amplituhedron and we prove that its Euler characteristic is $1$. 

\subsection{The momentum amplituhedron and its boundary stratification}

\begin{definition}
	Define the \emph{twisted nonnegative part} of $\Gr_{k,n}$ to be:
	\begin{align}
		\Gr^{\ge 0,\tau}_{k,n} = \lbrace V \in \Gr_{k,n}:
		(-1)^{\mbox{inv}(I, [n] \setminus I)}\Delta_{[n] \setminus I}(V) \geq 0  \text{ for all }
		I \in {[n] \choose n-k} \rbrace\,,
	\end{align}
	where $\mbox{inv}(A,B)=\# \lbrace a \in A, b \in B | a > b \rbrace $ denotes the inversion number.
\end{definition}

One can verify as in \cite[Lemma 1.11]{Karp:2015duv} that if $\{\Delta_I(V)\}$ are the Pl\"ucker coordinates of a point $V$ in $\Gr_{k,n}$, then the orthogonal complement $V^{\perp} \in \Gr_{n-k,n}$ is represented by a point with Pl\"ucker coordinates $\Delta_{J}(V^{\perp}) = (-1)^{\mbox{inv}(J, [n] \setminus J)}\Delta_{[n] \setminus J}(V)$ for $J\in {[n] \choose n-k}$.

\begin{definition}
	For $n,p$ with $n \geq p$, define $\mbox{Mat}^{>0}_{n,p}$ to be the set of real $n \times p$ matrices whose maximal minors (Pl\"ucker coordinates) are all positive and its \emph{twisted totally positive part} as
	\begin{equation}
		\mbox{Mat}^{>0,\tau}_{n,p} = \lbrace A \in \mbox{Mat}_{n,p}: (-1)^{\mbox{inv}(I, [n] \setminus I)}\Delta_{[n] \setminus I}(A) > 0 
		\text{ for all }I \in {[n] \choose n-p} \rbrace\,.
	\end{equation}
\end{definition}

\begin{definition}
	A subset $I$ of $[n]$ is said to be \emph{cyclically consecutive} if its elements, or the elements of its complement in $[n]$, are consecutive.
\end{definition}

\begin{definition}[{\cite[Section 2.2]{Damgaard:2019ztj}}]\label{def:momamp}
	Let $k,n$ be integers with $2\le k\le n-2$. Let $\tilde{\Lambda} \in \mbox{Mat}^{>0}_{n, k+2}$ and $\Lambda \in \mbox{Mat}^{>0,\tau}_{n, n-k+2}$ be a pair of matrices, called the \emph{kinematic data}. Given an element $[C]\in\Gr_{k,n}^{\ge 0}$, with orthogonal complement $[C^\perp] \in \Gr_{n-k,n}$, let $\tY = C\tilde{\Lambda}$ and $Y = C^\perp\Lambda$. The \emph{momentum amplituhedron map} $\Phi_{\tilde{\Lambda},\Lambda}: \Gr^{\ge 0}_{k,n} \rightarrow \Gr_{k,k+2} \times \Gr_{n-k,n-k+2}$ is defined by $\Phi_{\tilde{\Lambda},\Lambda}(C)\coloneqq (\tY,Y)$ and the \emph{momentum amplituhedron} 	$\mathcal{M}_{n,k}(\tilde{\Lambda},\Lambda)$ is the image of $\Gr_{k,n}^{\geq 0}$ under $\Phi_{\tilde{\Lambda},\Lambda}$.
	
	For $i,j\in[n]$, we define $[\tY ij]=\det(\tY_1,\ldots,\tY_k,\tilde{\Lambda}_{i},\tilde{\Lambda}_{j})$ to be the determinant of the $(k+2)\times(k+2)$ matrix with rows  $\tY_1,\ldots,\tY_k,\tilde{\Lambda}_{i},\tilde{\Lambda}_{j}$, and we define $\langle Y ij\rangle=\det(Y_1,\ldots,Y_{n-k},\Lambda_{i},\Lambda_{j})$ to be the determinant of the $(n-k+2)\times(n-k+2)$ matrix with rows  $Y_1,\ldots,Y_{n-k},\Lambda_{i},\Lambda_{j}$. We require that the kinematic data satisfy $\sum_{i<j\in I}[\tY ij]\langle Y ij\rangle > 0$ on $\Gr_{k,n}^{> 0}$ for every cyclically consecutive subset $I$ of $[n]$ with $|I| \geq 2$; this is a necessary condition for the codimension-one boundaries of the momentum amplituhedron to correspond to factorization channels of the scattering amplitude \cite[Section 2.3]{Damgaard:2019ztj}.
\end{definition}

While $\Gr_{k,k+2} \times \Gr_{n-k,n-k+2}$ has dimension $2n$, $\M_{n,k}$  has dimension $2n-4$.

\begin{remark} \label{remark:independent-of-kinematic-data}
	The combinatorial properties of $\mathcal{M}_{n,k}(\tilde{\Lambda},\Lambda)$ are conjectured to be independent of the choice of $(\tilde{\Lambda},\Lambda)$. Consequently, we will omit our choice and simply write $\mathcal{M}_{n,k}$.   
\end{remark}

\begin{definition}
	Given a positroid cell $S_{\sigma}$ (resp., $S_G$) of $\Gr_{k,n}^{\ge 0}$, we write $\Phi_{\sigma}^\circ = \Phi(S_\sigma)$ (resp., $\Phi_{G}^\circ = \Phi(S_G)$) and $\Phi_{\sigma} = \overline{\Phi(S_\sigma)}$ (resp., $\Phi_{G} = \overline{\Phi(S_G)}$), omitting our choice of kinematic data following \cref{remark:independent-of-kinematic-data}. We refer to $\Phi_{\sigma}^\circ$ (resp., $\Phi_{G}^\circ$) as a \emph{stratum} of $\mathcal{M}_{n,k}$ and we denote its \emph{dimension} by $\dim \Phi_{\sigma}^\circ$ (resp., $\dim \Phi_{G}^\circ$).
\end{definition}

The following definition first appeared in \cite[Section 2.4]{Lukowski:2019kqi}.

\begin{definition}
	Let $S_{\sigma_{k,n}}$ denote the unique top-dimensional positroid cell of $\Grk$ which is the interior of $\Grk$. Given a positroid cell $S_{\sigma}$ of $\Grk$, we say that $\Phi^{\circ}_{\sigma}$ is a \emph{boundary stratum} (or simply \emph{boundary}) of 
	$\mathcal{M}_{n,k}$ if 
	\begin{itemize}
		\item $\Phi^{\circ}_{\sigma} \cap \Phi^{\circ}_{\sigma_{k,n}} = \emptyset$, and
		\item for any positroid cell $S_{\sigma'}$ whose closure contains $S_{\sigma}$, we have $\dim \Phi^{\circ}_{\sigma'} > \dim \Phi^{\circ}_{\sigma}$.
	\end{itemize}
\end{definition}

The boundary stratification of the momentum amplituhedron was extensively studied in \cite{Ferro:2020lgp} using \texttt{amplituhedronBoundaries.m} \cite{Lukowski:2020bya}, a Mathematica package. The package employs a recursive routine, initially developed for the $m=2$ amplituhedron in \cite{Lukowski:2019kqi}, which exploits the positroid stratification\footnote{The package \texttt{positroids.m} \cite{Bourjaily:2012gy} implements the positroid stratification in Mathematica.} of $\Grk$. The algorithm in \cite{Lukowski:2019kqi} (conjecturally) generates all boundaries as per the above definition.

Let $\mathcal{S}_{n,k}$ denote the set of decorated permutations $\sigma$ such that $\Phi^{\circ}_{\sigma}$ is a boundary stratum of $\mathcal{M}_{n,k}$, together with ${\sigma_{k,n}}$. Based on poset data generated in \cite{Ferro:2020lgp}, we are emboldened to conjecture the following.

\begin{conjecture}\label{conj:mom-CW}
	We have a regular CW decomposition $$\mathcal{M}_{n,k} = \sqcup_{\sigma \in \mathcal{S}_{n,k}} \Phi^{\circ}_{\sigma}$$ of the momentum amplituhedron. In particular, each boundary stratum $\Phi^{\circ}_{\sigma}$ is homeomorphic to an open ball.
\end{conjecture}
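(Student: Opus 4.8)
The plan is to prove \cref{conj:mom-CW} by adapting to the momentum amplituhedron the strategy Galashin--Karp--Lam used to show that $\Grk$ is a regular CW complex, and then reducing the remaining topological regularity to a combinatorial statement about the boundary poset $\mathcal{S}_{n,k}$. The central difficulty throughout is that the map $\Phi = \Phi_{\tilde{\Lambda},\Lambda}$ collapses dimension (from $\dim \Grk = k(n-k)$ down to $2n-4$), so most positroid cells do \emph{not} map injectively and their images overlap; the set $\mathcal{S}_{n,k}$ is defined precisely to select the top cell together with those boundary cells on which $\Phi$ neither collapses nor folds. The whole argument therefore hinges on controlling which cells behave well under $\Phi$.

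First I would show that $\Phi$ restricts to a homeomorphism from each boundary positroid cell $S_{\sigma}$ (with $\sigma \in \mathcal{S}_{n,k}$) onto the stratum $\Phi^{\circ}_{\sigma}$. Since each positroid cell is a topological cell \cite[Theorem 6.5]{Postnikov:2006kva}, this gives at once the ``open ball'' assertion, and since $\mathcal{M}_{n,k} = \Phi(\Grk)$ by \cref{def:momamp} it also gives that the strata cover $\mathcal{M}_{n,k}$. The crux is \emph{injectivity} of $\Phi$ on each such cell, which I would establish by exploiting the sign data intrinsic to the momentum amplituhedron: the brackets $[\tY ij]$ and $\langle Y ij\rangle$ together with the positivity $\sum_{i<j\in I}[\tY ij]\langle Y ij\rangle>0$ from \cref{def:momamp} should allow one to reconstruct the positroid cell coordinates from the image point, in the spirit of the injectivity arguments used for the $m=2$ amplituhedron in \cite{Lukowski:2019kqi}. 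The same reconstruction, combined with the distinct positroid invariants of distinct cells, yields pairwise disjointness of the strata indexed by $\mathcal{S}_{n,k}$.

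Second, I would match the closure relations to the conjectured boundary poset. Here the combinatorial output of the paper is decisive: by \cref{lem:invariant} the mom-dimension is a refinement invariant, and by \cref{thm:GF-trees} the variable $q$ grades the contracted Grassmannian forests indexing $\mathcal{S}_{n,k}$. One must verify that $\dim \Phi^{\circ}_{\sigma} = \dim_\M(\sigma)$, that covering relations drop this dimension by exactly one, and that $\overline{\Phi^{\circ}_{\sigma}} = \bigsqcup_{\sigma' \le \sigma}\Phi^{\circ}_{\sigma'}$, so that no boundaries are spurious and none are missed; this is exactly what the computations of \cite{Ferro:2020lgp} make plausible, and it is consistent with the Euler characteristic being $1$, which follows from the alternating count $q=-1$ in the forest generating function of \cref{thm:GF-trees} (a regular CW ball is contractible). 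To upgrade the decomposition to a \emph{regular} CW complex I would then invoke Bj\"orner's characterization, which reduces regularity to two ingredients: (i) each closed stratum $\overline{\Phi^{\circ}_{\sigma}}$ is a ball whose boundary is a union of lower strata, and (ii) the augmented face poset of $\mathcal{S}_{n,k}$ is a CW poset, i.e.\ every open lower interval $(\hat{0},\sigma)$ has order complex homeomorphic to a sphere. Ingredient (ii) is purely combinatorial and I would attack it by induction on $n$ and on mom-dimension, using the amalgamation and direct-sum factorizations of Grassmannian tree and forest permutations from \cref{sec:perm} to show each such interval is thin and shellable.

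I expect the main obstacle to be ingredient (i): proving that each closed stratum is homeomorphic to a genuine \emph{closed ball}, rather than merely contractible or homology-trivial. Injectivity on boundary cells and the poset combinatorics are tractable with the tools above, but ruling out pinches, self-tangencies, or boundary degenerations in the image of the collapsing map $\Phi$ requires either an explicit stratification-preserving homeomorphism onto a ball or a transfer of a Galashin--Karp--Lam-style contraction flow on $\Grk$ through $\Phi$. Because $\Phi$ fails to be injective precisely on the non-boundary cells, controlling how those cells degenerate in the limit onto $\overline{\Phi^{\circ}_{\sigma}}$ is where the genuine topological content lies, and this is the step I would anticipate consuming most of the work.
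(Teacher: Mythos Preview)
The statement you are addressing is stated in the paper as a \emph{conjecture}, not a theorem: the paper offers no proof of \cref{conj:mom-CW} and presents it as an open problem supported by the computational evidence of \cite{Ferro:2020lgp}. There is consequently no proof in the paper against which your proposal can be compared.

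What you have written is a research programme, not a proof, and your own language (``I would show,'' ``should allow one to reconstruct,'' ``I expect the main obstacle to be'') makes this explicit. The substantive steps are all deferred rather than carried out. Injectivity of $\Phi$ on each $S_\sigma$ with $\sigma\in\mathcal{S}_{n,k}$ is asserted on the strength of an analogy with \cite{Lukowski:2019kqi}, but no reconstruction of positroid coordinates from the brackets $[\tY ij]$, $\langle Yij\rangle$ is actually given; this is already nontrivial since $\Phi$ genuinely collapses dimension. The identity $\dim\Phi^\circ_\sigma=\dim_\M(\sigma)$ and the closure relation $\overline{\Phi^\circ_\sigma}=\bigsqcup_{\sigma'\le\sigma}\Phi^\circ_{\sigma'}$ that you invoke are themselves the content of \cref{conj:mom-cells}, so at this point you are assuming one conjecture to argue for another. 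Finally, you correctly isolate the closed-ball property for $\overline{\Phi^\circ_\sigma}$ as the crux, but supply no argument for it beyond the hope that a Galashin--Karp--Lam flow might be pushed through $\Phi$. These are not peripheral technicalities; together they constitute essentially the entire content of the conjecture. Your outline is a reasonable sketch of where a proof might eventually come from, but it should be presented as such rather than as a proof.
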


The authors of \cite{Ferro:2020lgp} synthesised the boundary stratification of $\mathcal{M}_{n,k} $ for $4\le n\le8$ and $2\le k \le n-2$. From this data, they observed that positroid cells corresponding to momentum amplituhedron boundaries are labelled by Grassmannian forests. Combining their observations with knowledge about the singularity structure of tree-level amplitudes in $\mathcal{N}=4$ SYM, they hypothesised a bijection between boundaries and the aforementioned pictorial labels. This hypothesis is summarised below.

\begin{conjecture}[{\cite[Section 3.3]{Ferro:2020lgp}}]\label{conj:mom-cells}
	$\Phi_{G}^\circ$ is a boundary of $\mathcal{M}_{n,k}$ if and only if $G$ is a Grassmannian forest of type $(k,n)$. Moreover, given a Grassmannian forest $F$, $\dim \Phi_{F}^\circ = \dim_{\M}(F)$.
\end{conjecture}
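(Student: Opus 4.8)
The natural plan is to analyse the momentum amplituhedron map $\Phi:=\Phi_{\tilde\Lambda,\Lambda}$ one positroid cell at a time, along the positroid stratification of $\Gr_{k,n}^{\ge 0}$ already exploited in \cite{Ferro:2020lgp}. For a reduced Grassmannian graph $G$ of type $(k,n)$ I would compute $\dim\Phi_G^\circ$ as the generic rank of the differential of $\Phi|_{S_G}$, writing $\Phi(C)=(C\tilde\Lambda,\,C^\perp\Lambda)$ and using the Pl\"{u}cker-coordinate duality $\Delta_J(C^\perp)=(-1)^{\mathrm{inv}(J,[n]\setminus J)}\Delta_{[n]\setminus J}(C)$ recalled before \cref{def:momamp} to treat the two factors symmetrically. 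The conjecture then splits into (A) the dimension formula $\dim\Phi_F^\circ=\dim_\M(F)$ and (B) the characterisation of which cells are boundaries; I would prove (A) first, as it feeds into (B).

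For (A) the point is that $\dim_\M$ is assembled locally from the vertices, so I would establish a local-to-global computation of $\dim\Phi_G^\circ$. At a \emph{generic} internal vertex of degree $d$ the local geometry is that of a ``mini'' momentum amplituhedron on $d$ legs, of dimension $2d-4=m(v)$, consistent with the global count $\dim\mathcal{M}_{n,k}=2n-4$ recorded after \cref{def:momamp}. At a \emph{white} (resp.\ \emph{black}) vertex of degree $d$ the local object is $\Gr_{1,d}^{\ge 0}$ (resp.\ $\Gr_{d-1,d}^{\ge 0}$), on which $\Phi$ is dimension-preserving, so that the contribution is $d-1=\dim\Gr_{1,d}^{\ge 0}=m(v)$ (cf.\ the remark after \cref{lem:invariant}). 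Gluing two sub-pieces along an internal edge imposes a single constraint, accounting for the $-1$ attached to each vertex and the overall $+1$ in \eqref{eq:mom-dim-T}, while additivity over $\Trees(F)$ in \eqref{eq:mom-dim-F} reflects a factorisation of $\Phi$ mirroring the amalgamation and direct-sum operations of \cref{sec:perm}. I would make this rigorous by induction on the number of internal vertices, amalgamating one vertex at a time and tracking the Jacobian rank; \cref{lem:invariant} guarantees the answer is independent of the chosen representative.

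For (B) I would verify the two defining conditions of a boundary stratum. The disjointness $\Phi_F^\circ\cap\Phi_{\sigma_{k,n}}^\circ=\emptyset$ should follow from positivity: a forest cell is never the top cell $S_{\sigma_{k,n}}$, and a sign-variation argument in the spirit of \cite{Karp:2015duv} should force the image of a forest cell onto the topological boundary of $\mathcal{M}_{n,k}$ rather than into its interior. The maximality condition---that $\dim\Phi_{\sigma'}^\circ>\dim\Phi_F^\circ$ whenever $\overline{S_{\sigma'}}\supseteq S_F$---would follow from (A) together with strict monotonicity of $\dim_\M$ along the covering relations of the positroid poset, using that coarsening in the refinement order of \cref{def:contracted} strictly decreases $\dim_\M$. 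For the converse, that a boundary forces $G$ to be a forest, I would extend the dimension count of (A) to cells with internal cycles and show that each such cell either fails maximality or has image contained in $\overline{\Phi_F^\circ}$ for some forest $F$, hence is not a boundary.

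The hard part will be the genuinely geometric input in (B): controlling the global closure relations among the $\overline{\Phi_\sigma^\circ}$, proving disjointness from the interior uniformly in $n$ and $k$, and establishing the regularity---each stratum a ball assembling into a CW complex---underlying \cref{conj:mom-CW}. These are precisely the statements verified only case-by-case in \cite{Ferro:2020lgp,Lukowski:2019kqi}. A uniform argument will most plausibly require exhibiting $\mathcal{M}_{n,k}$ as a positive geometry with the predicted boundary structure---for instance via a BCFW-type recursion compatible with the amalgamation of \cref{sec:perm}---together with the quantitative sign-variation bounds needed to pin down the image of each positroid cell.
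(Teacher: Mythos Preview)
The statement you are attempting to prove is not proved in the paper: it is explicitly stated as \cref{conj:mom-cells}, a \emph{conjecture} imported from \cite{Ferro:2020lgp}. The paper makes no attempt to prove it; on the contrary, the results of \cref{sec:momentum} (in particular \cref{thm:GF-mom} and \cref{thm:GF-mom-euler}) are expressly conditional on it, and the introduction flags that ``the results of \cref{sec:momentum} assume the characterization of momentum amplituhedron boundaries conjectured in \cite{Ferro:2020lgp}.'' So there is no ``paper's own proof'' to compare against.

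As for your proposal itself: it is a plausible outline, but it is not a proof, and you essentially acknowledge this in your final paragraph. The dimension count in (A) via local vertex contributions and gluing is a reasonable heuristic, but making the Jacobian-rank computation rigorous for generic kinematic data $(\tilde\Lambda,\Lambda)$, uniformly in $n$ and $k$, is already nontrivial and not carried out here. Part (B) is where the real content lies, and your sketch defers precisely the hard steps: proving disjointness from the interior by a sign-variation argument ``in the spirit of \cite{Karp:2015duv}'' is an aspiration, not an argument; strict monotonicity of $\dim_\M$ along \emph{all} positroid covering relations (not just those coming from coarsening) is asserted without justification; and the converse direction---ruling out non-forest cells as boundaries---is left entirely open. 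These are exactly the obstacles that keep the statement at the level of a conjecture, and nothing in your proposal overcomes them.
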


\begin{remark}
	It is immediate from the definition that the momentum amplituhedron $\mathcal{M}_{n,2}$ is isomorphic to $\Gr^{\ge 0}_{2,n}$, and the boundary stratification	of $\mathcal{M}_{n,2}$ coincides with the boundary stratification of $\Gr^{\ge 0}_{2,n}$. 
\end{remark}

\subsection{Enumerating the boundaries of the momentum amplituhedron}

From \cref{conj:mom-cells}, the boundaries of the momentum amplituhedron $\mathcal{M}_{n,k}$ of dimension $r$ are in bijection with  the contracted Grassmannian forests of type $(k,n)$ with mom-dimension $r$, whose generating function was given in \cref{thm:GF-trees}. We summarise this result below.

\begin{corollary}\label{thm:GF-mom}
	The number of boundaries of the momentum amplituhedron $\mathcal{M}_{n,k}$ of dimension $r$ is given by $[x^ny^kq^r]\mathcal{G}_\text{forest}(x,y,q)$ where
	\begin{align*}
		x \mathcal{G}_\text{forest}(x,y,q)=\left(\frac{x}{1+\mathcal{G}_\text{tree}(x,y,q)}\right)^{\lrangle{-1}},
	\end{align*}
	was computed in \cref{thm:GF-trees} and the compositional inverse is with respect to the variable $x$. Equivalently, 
	\begin{align*}
		[x^n] \mathcal{G}_\text{forest}(x,y,q) = \frac{1}{n+1} [x^n] 
		\left( 1+\mathcal{G}_\text{tree}(x,y,q) \right)^{n+1}.
	\end{align*}
	In the above expressions, we have from 
	\cref{thm:GF-trees} that 
	\begin{align*}
		\mathcal{G}_\text{tree}(x,y,q)&=x\left(1+y+yq\,C^{\lrangle{-1}}(x,y,q)\right),\text{ with }\\
		C(x,y,q)&=\frac{x(1-x (1+y) q^2-x^2 y q^2 (1+q-q^2)-x^4 y^2 q^5 (1+q))}{(1+xq)(1+xyq)(1-xq^2)(1-xyq^2)}.
	\end{align*}
\end{corollary}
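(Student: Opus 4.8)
The plan is to recognize that \cref{thm:GF-mom} is not an independent result but a direct translation of \cref{thm:GF-trees} via \cref{conj:mom-cells}, so the proof should be extremely short. First I would invoke \cref{conj:mom-cells}, which asserts that the boundaries $\Phi_G^\circ$ of $\mathcal{M}_{n,k}$ are in bijection with Grassmannian forests $G$ of type $(k,n)$, and that under this bijection the geometric dimension $\dim\Phi_F^\circ$ equals the combinatorial mom-dimension $\dim_\M(F)$. The key observation is that each boundary corresponds to a \emph{refinement-equivalence class} of reduced Grassmannian graphs, and by the remark following \cref{def:contracted}, each such class contains a unique \emph{contracted} representative. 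Hence counting boundaries of $\mathcal{M}_{n,k}$ of dimension $r$ is exactly counting contracted Grassmannian forests of type $(k,n)$ with mom-dimension $r$.

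Next I would simply cite \cref{thm:GF-trees}, which provides the generating function $\mathcal{G}_\text{forest}(x,y,q)$ enumerating contracted Grassmannian forests of type $(k,n)$ by mom-dimension $r$ as the coefficient $[x^n y^k q^r]\mathcal{G}_\text{forest}(x,y,q)$. Combining the bijection and dimension-matching from \cref{conj:mom-cells} with the enumeration from \cref{thm:GF-trees} yields the stated count of boundaries of dimension $r$. The two displayed formulas for $\mathcal{G}_\text{forest}$ (the compositional-inverse form \eqref{eq:GF-forest-1} and the Lagrange-inversion form \eqref{eq:GF-forest-2}) are reproduced verbatim from \cref{thm:GF-trees}, and the explicit expressions for $\mathcal{G}_\text{tree}(x,y,q)$ and $C(x,y,q)$ are likewise carried over from \eqref{eq:GF-trees-1}--\eqref{eq:GF-trees-2}.

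There is essentially no mathematical obstacle here: the corollary is a restatement/repackaging of earlier results, and all the analytic content lives in \cref{thm:GF-trees} (which in turn rests on \cref{thm:tree} and \cref{thm:forest}) and in the conjectural combinatorial description of boundaries. The only subtle point worth flagging explicitly is that the result is \emph{conditional} on \cref{conj:mom-cells}; I would state this dependence clearly, noting that \cref{conj:mom-cells} supplies both the bijection between boundary strata and Grassmannian forests and the identification of dimensions. Thus the entire proof reduces to the single sentence: ``By \cref{conj:mom-cells}, the dimension-$r$ boundaries of $\mathcal{M}_{n,k}$ are in bijection with contracted Grassmannian forests of type $(k,n)$ with mom-dimension $r$, so the claim follows immediately from \cref{thm:GF-trees}.'' No new computation is required, and the apparent length of the corollary statement comes only from reproducing the formulas for the reader's convenience.
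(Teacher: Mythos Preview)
Your proposal is correct and matches the paper's own treatment: the paper does not even give a formal proof block for this corollary, instead introducing it with a single sentence stating that by \cref{conj:mom-cells} the dimension-$r$ boundaries of $\mathcal{M}_{n,k}$ are in bijection with contracted Grassmannian forests of type $(k,n)$ with mom-dimension $r$, so the result is just a summary of \cref{thm:GF-trees}. Your additional remark about contracted representatives and the explicit flagging of the conditional dependence on \cref{conj:mom-cells} are accurate and appropriate.
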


We have  checked that this formula reproduces all results for $[x^ny^k]\mathcal{G}_\text{forest}(x,y,q)$ listed in Tables 1 and 2 of \cite{Ferro:2020lgp}. Their results include as high as $(n,k)=(12,2)$. In \cref{tbl:G-forest} we present $[x^ny^k]\mathcal{G}_\text{forest}(x,y,q)$ for $n=4$ through $12$ and for all $2\le k\le \lfloor\frac{n}{2}\rfloor$.

The following corollaries are predicated upon \cref{conj:mom-cells}. \cref{thm:GF-mom-euler} additionally requires \cref{conj:mom-CW}.

\begin{corollary}
	The number of $0$-dimensional boundary strata of $\mathcal{M}_{n,k}$ is ${n \choose k}$, that is, $[x^n y^k q^0]\mathcal{G}_\text{forest}(x,y,q) = {n \choose k}$.
\end{corollary}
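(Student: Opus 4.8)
The plan is to extract the $q^0$ coefficient directly from the generating functions of \cref{thm:GF-trees} by specializing $q=0$. Observe first that $C(x,y,q)$ in \eqref{eq:GF-trees-2} is a formal power series in $x,y,q$ with only nonnegative powers of $q$, whose expansion in $x$ has leading term $x$ (with coefficient $1$, free of $q$). Consequently its compositional inverse $C^{\lrangle{-1}}(x,y,q)$ in the variable $x$ — and hence $\mathcal{G}_\text{tree}$ and $\mathcal{G}_\text{forest}$ — is again such a power series, so for each of these $[q^0](\cdot)$ agrees with the specialization at $q=0$. It therefore suffices to compute $\mathcal{G}_\text{forest}(x,y,0)$ and read off the coefficient of $x^n y^k$.

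First I would specialize $\mathcal{G}_\text{tree}$. Setting $q=0$ in \eqref{eq:GF-trees-2} gives $C(x,y,0)=x$, whose compositional inverse in $x$ is $C^{\lrangle{-1}}(x,y,0)=x$; and since the term $yq\,C^{\lrangle{-1}}$ in \eqref{eq:GF-trees-1} carries an explicit factor of $q$, it vanishes at $q=0$. Thus $\mathcal{G}_\text{tree}(x,y,0)=x(1+y)$. Next I would feed this into \eqref{eq:GF-forest-1}: at $q=0$ we must invert $R(x):=x/\bigl(1+x(1+y)\bigr)$ in the variable $x$. Solving $w=x/\bigl(1+(1+y)x\bigr)$ for $x$ yields $R^{\lrangle{-1}}(w)=w/\bigl(1-(1+y)w\bigr)$, so that $x\,\mathcal{G}_\text{forest}(x,y,0)=x/\bigl(1-(1+y)x\bigr)$ and hence $\mathcal{G}_\text{forest}(x,y,0)=\sum_{n\ge0}(1+y)^n x^n$. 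Extracting coefficients gives $[x^n y^k]\mathcal{G}_\text{forest}(x,y,0)=\binom{n}{k}$, which (via \cref{conj:mom-cells} and \cref{thm:GF-trees}, relating $0$-dimensional boundaries to contracted Grassmannian forests of type $(k,n)$ with mom-dimension $0$) is the assertion.

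I would also record the conceptual cross-check that explains the answer. A contracted Grassmannian forest of type $(k,n)$ has mom-dimension $0$ if and only if every tree in it does, by \eqref{eq:mom-dim-F}. By \eqref{eq:mom-dim-T} a tree with at least two boundary vertices has $\dim_\M\ge1$: every degree-$1$ internal vertex is a boundary leaf, and a boundary leaf together with its boundary vertex forms an isolated two-vertex component, so in a connected tree with $n\ge2$ boundary vertices every internal vertex has degree $\ge3$, whence $m(v)\ge2$ and $\dim_\M\ge 1+(m(v)-1)\ge2$ (and the $n=2$ tree has $\dim_\M=1$). Therefore each tree in a mom-dimension-$0$ forest must be a single boundary leaf, which is white ($h=1$, contributing helicity $1$) or black ($h=0$, contributing helicity $0$); choosing which $k$ of the $n$ leaves are white produces exactly $\binom{n}{k}$ such forests.

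The only genuinely delicate points are the justification that $[q^0]$ equals the $q=0$ specialization — i.e.\ that the compositional inverses introduce no negative powers of $q$, which follows from the leading coefficient in $x$ being $1$ at every stage — and the observation that degree-$1$ internal vertices occur only in the one-boundary-vertex tree, so the value $m(v)=0$ they would carry never diminishes the mom-dimension of a larger tree. Both are short to verify, and the remaining computation is the routine single compositional inversion displayed above.
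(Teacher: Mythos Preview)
Your proof is correct. The paper states this corollary without proof, so there is nothing to compare against; your specialization $q=0$ in \eqref{eq:GF-trees-1}--\eqref{eq:GF-forest-1} is the natural way to fill the gap, and the computation is carried out cleanly. The combinatorial cross-check you give is in fact a complete independent proof and is arguably the more direct one: once you observe that the only Grassmannian trees of mom-dimension $0$ are the two single-boundary-leaf trees (white or black), the count $\binom{n}{k}$ is immediate. One small simplification: since $\mathcal{G}_\text{forest}(x,y,q)$ is \emph{defined} as a generating function for combinatorial objects with nonnegative statistics, it is automatically a power series in $x,y,q$, so $[q^0]\mathcal{G}_\text{forest}=\mathcal{G}_\text{forest}|_{q=0}$ holds without needing to trace nonnegativity of $q$-powers through the compositional inverses.
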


\begin{corollary}\label{thm:GF-mom-euler}
	The Euler characteristic of the momentum amplituhedron is $1$.
\end{corollary}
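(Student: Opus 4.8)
The plan is to compute the Euler characteristic as an alternating sum over boundary strata, which by the regularity of the CW decomposition (\cref{conj:mom-CW}) equals $\sum_\sigma (-1)^{\dim \Phi^\circ_\sigma}$. Under \cref{conj:mom-cells} this is exactly the specialization of the rank generating function $\mathcal{G}_\text{forest}(x,y,q)$ at $q=-1$: the Euler characteristic of $\mathcal{M}_{n,k}$ equals $[x^n y^k]\,\mathcal{G}_\text{forest}(x,y,-1)$, and the total Euler characteristic statement amounts to showing that this coefficient, summed appropriately (or read off for the relevant bidegree), equals $1$. So the entire problem reduces to evaluating the generating functions of \cref{thm:GF-trees} at $q=-1$.

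First I would substitute $q=-1$ into the tree kernel \eqref{eq:GF-trees-2}. The numerator factor $1 + q - q^2$ becomes $1-1-1=-1$ and $1+q$ becomes $0$, so the term $-x^4 y^2 q^5(1+q)$ vanishes entirely, while the denominator factors $(1+xq)(1+xyq)(1-xq^2)(1-xyq^2)$ collapse to $(1-x)(1-xy)(1-x)(1-xy)$. After simplification I expect $C(x,y,-1)$ to reduce to a strikingly simple rational function — my hope is that it collapses to something like $x/((1-x)(1-xy))$ or an equally clean form whose compositional inverse (in $x$) is elementary. Computing $C^{\lrangle{-1}}(x,y,-1)$ and feeding it into \eqref{eq:GF-trees-1} should give $\mathcal{G}_\text{tree}(x,y,-1)$ in closed form.

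Next I would insert $\mathcal{G}_\text{tree}(x,y,-1)$ into the forest relation, using the Lagrange-inversion form \eqref{eq:GF-forest-2}, namely $[x^n]\mathcal{G}_\text{forest} = \tfrac{1}{n+1}[x^n](1+\mathcal{G}_\text{tree})^{n+1}$. The Euler characteristic of $\mathcal{M}_{n,k}$ is $\sum_r (-1)^r [x^n y^k q^r]\mathcal{G}_\text{forest} = [x^n y^k]\mathcal{G}_\text{forest}(x,y,-1)$, and I would argue that for the relevant range $2 \le k \le n-2$ this coefficient equals $1$. I anticipate that at $q=-1$ the forest generating function degenerates dramatically — very likely $\mathcal{G}_\text{forest}(x,y,-1)$ becomes a simple geometric-type series in which every coefficient $[x^n y^k]$ in the admissible range is exactly $1$, reflecting that the alternating count telescopes so that only a single $0$-dimensional contribution (or a single top cell) survives.

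\textbf{The main obstacle} I expect is not the symbolic substitution, which the factored form of \eqref{eq:GF-trees-2} was evidently designed to make clean, but rather correctly handling the bookkeeping at the boundary of the valid parameter range and justifying that $[x^n y^k]\mathcal{G}_\text{forest}(x,y,-1)=1$ uniformly. In particular one must check that the specialization genuinely computes the topological Euler characteristic: this requires invoking \cref{conj:mom-CW} so that $\mathcal{M}_{n,k}$ is a regular CW complex (hence its Euler characteristic is the alternating face count), and confirming that the generating function correctly includes the top cell $\sigma_{k,n}$ of dimension $2n-4$ together with all lower boundary strata. I would verify the final "$=1$" either by extracting the closed form of $\mathcal{G}_\text{forest}(x,y,-1)$ directly, or as a cross-check against the tabulated data in \cref{tbl:G-forest} by confirming that the alternating sum of each column is $1$.
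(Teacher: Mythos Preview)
Your proposal is correct and follows essentially the same route as the paper: specialize $q=-1$, observe that $C(x,y,-1)=\dfrac{x}{(1-x)(1-xy)}$ (your guess is exactly right), compute its compositional inverse, and then pass to $\mathcal{G}_\text{forest}(x,y,-1)$ to find that every coefficient $[x^n y^k]$ equals $1$. The only tactical difference is that the paper uses the compositional-inverse relation \eqref{eq:GF-forest-1} rather than the Lagrange form \eqref{eq:GF-forest-2}; this turns out to be slicker because $\dfrac{x}{1+\mathcal{G}_\text{tree}(x,y,-1)}$ simplifies precisely to $C^{\lrangle{-1}}(x,y,-1)$, so inverting it immediately returns $C(x,y,-1)$ and hence $\mathcal{G}_\text{forest}(x,y,-1)=\dfrac{1}{(1-x)(1-xy)}$.
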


\begin{proof}[Proof of \cref{thm:GF-mom-euler}] 
	Recall that for a CW complex, the Euler characteristic is defined as the alternating sum $\chi=n_0-n_1+n_2-n_3+\ldots$ where $n_r$ denotes the number of cells of dimension $r$ in the complex. The Euler characteristic of $\mathcal{M}_{n,k}$ can be computed as $[x^ny^k]\mathcal{G}_\text{forest}(x,y,-1)$. To this end, let us specialize to $q=-1$ for 	$\mathcal{G}_\text{tree}(x,y,q)$ given in the expression \eqref{eq:GF-trees-1} from \cref{thm:GF-trees}. One can easily verify that
	\begin{align}
		C(x,y,-1)=\frac{x}{(1-x)(1-xy)},
	\end{align}
	and that its compositional inverse with respect to the variable $x$ is given by
	\begin{align}
		C^{\lrangle{-1}}(x,y,-1)=\frac{1+x(1+y)-\sqrt{\Delta}}{2xy},
	\end{align}
	where $\Delta=(1+x(1+y))^2-4x^2y$. Therefore we have that
	\begin{align*}
		\mathcal{G}_\text{forest}(x,y,-1)&= 
		\frac{1}{x} \left(\frac{x}{1+\mathcal{G}_\text{tree}(x,y,-1)}\right)^{\lrangle{-1}} \\
		&= \frac{1}{x} \left(\frac{x}{1+x(1+y)-\frac{1}{2} (1+x(1+y)-\sqrt{\Delta})} \right)^{\lrangle{-1}}\\
		&= \frac{1}{x} \left(\frac{2x}{1+x(1+y)+\sqrt{\Delta}} \right)^{\lrangle{-1}}\\
		&= \frac{1}{x} \left(\frac{1+x(1+y)-\sqrt{\Delta}}{2xy} \right)^{\lrangle{-1}}\\
		&= \frac{1}{x} C(x,y,-1)= \frac{1}{(1-x)(1-xy)} = 
		\sum_{0\le n}x^n\sum_{0\le k\le n} y^k,
	\end{align*}
	with every coefficient equal to $1$.
\end{proof}


\appendix

\section{Data}\label{sec:data}

In \cref{tbl:G-forest} we give expressions for $[x^ny^k]\mathcal{G}_\text{forest}(x,y,q)$ for $4\le n \le 12$, which extends the range of data presented in Tables 1 and 2 of \cite{Ferro:2020lgp}. Since the number of contracted Grassmannian forests of type $(k,n)$ equals those of type $(n-k,n)$ (see \cref{remark:tree-map}) we restrict $k$ to $2\le k\le \lfloor\frac{n}{2}\rfloor$.  

\begin{longtable}{p{1cm}p{\textwidth - 2cm}}
	\caption{Expressions for $[x^ny^k]\mathcal{G}_\text{forest}(x,y,q)$ for $4\le n \le 12$ and $2\le k\le \lfloor\frac{n}{2}\rfloor$.}
	\label{tbl:G-forest}
	\endfirsthead
	\endhead
	\toprule
	$(n,k)$ & \multicolumn{1}{c}{$[x^ny^k]\mathcal{G}_\text{forest}(x,y,q)$}\\
	\midrule
	\midrule
	$(4,2)$ & $q^4+4 q^3+10 q^2+12 q+6$ \\
	\midrule
	$(5,2)$ & $q^6+5 q^5+15 q^4+30 q^3+40 q^2+30 q+10$ \\
	\midrule
	$(6,2)$ & $q^8+6 q^7+21 q^6+50 q^5+90 q^4+120 q^3+110 q^2+60 q+15$ \\
	$(6,3)$ & $q^8+15 q^7+54 q^6+114 q^5+180 q^4+215 q^3+180 q^2+90 q+20$ \\
	\midrule
	$(7,2)$ & $q^{10}+7 q^9+28 q^8+77 q^7+161 q^6+266 q^5+350 q^4+350 q^3+245 q^2+105 q+21$ \\
	$(7,3)$ & $q^{10}+21 q^9+119 q^8+350 q^7+665 q^6+938 q^5+1050 q^4+910 q^3+560 q^2+210 q+35$ \\
	\midrule
	$(8,2)$ & $q^{12}+8 q^{11}+36 q^{10}+112 q^9+266 q^8+504 q^7+784 q^6+1008 q^5+1050 q^4+840 q^3+476 q^2+168 q+28$ \\
	$(8,3)$ & $q^{12}+28 q^{11}+188 q^{10}+720 q^9+1820 q^8+3262 q^7+4424 q^6+4788 q^5+4200 q^4+2870 q^3+1400 q^2+420 q+56$ \\
	$(8,4)$ & $q^{12}+32 q^{11}+300 q^{10}+1280 q^9+3264 q^8+5696 q^7+7420 q^6+7672 q^5+6426 q^4+4200 q^3+1960 q^2+560 q+70$ \\
	\midrule
	$(9,2)$ & $q^{14}+9 q^{13}+45 q^{12}+156 q^{11}+414 q^{10}+882 q^9+1554 q^8+2304 q^7+2898 q^6+3066 q^5+2646 q^4+1764 q^3+840 q^2+252 q+36$ \\
	$(9,3)$ & $q^{14}+36 q^{13}+279 q^{12}+1227 q^{11}+3726 q^{10}+8370 q^9+14322 q^8+19152 q^7+20622 q^6+18270 q^5+13230 q^4+7476 q^3+3024 q^2+756 q+84$ \\
	$(9,4)$ & $q^{14}+45 q^{13}+540 q^{12}+3003 q^{11}+10089 q^{10}+23049 q^9+38298 q^8+48618 q^7+49140 q^6+40656 q^5+27468 q^4+14490 q^3+5460 q^2+1260 q+126$ \\
	\midrule
	$(10,2)$ & $q^{16}+10 q^{15}+55 q^{14}+210 q^{13}+615 q^{12}+1452 q^{11}+2850 q^{10}+4740 q^9+6765 q^8+8340 q^7+8862 q^6+7980 q^5+5880 q^4+3360 q^3+1380 q^2+360 q+45$ \\
	$(10,3)$ & $q^{16}+45 q^{15}+395 q^{14}+1955 q^{13}+6705 q^{12}+17412 q^{11}+35640 q^{10}+58440 q^9+77490 q^8+84120 q^7+75852 q^6+57120 q^5+35280 q^4+17010 q^3+5880 q^2+1260 q+120$ \\
	$(10,4)$ & $q^{16}+60 q^{15}+880 q^{14}+5780 q^{13}+23385 q^{12}+65990 q^{11}+137835 q^{10}+220662 q^9+277890 q^8+281940 q^7+235410 q^6+163380 q^5+92862 q^4+41160 q^3+13020 q^2+2520 q+210$ \\
	$(10,5)$ & $q^{16}+65 q^{15}+1045 q^{14}+7915 q^{13}+34740 q^{12}+101240 q^{11}+212285 q^{10}+336220 q^9+415890 q^8+412980 q^7+336840 q^6+228102 q^5+126420 q^4+54600 q^3+16800 q^2+3150 q+252$ \\
	\midrule
	$(11,2)$ & $q^{18}+11 q^{17}+66 q^{16}+275 q^{15}+880 q^{14}+2277 q^{13}+4917 q^{12}+9042 q^{11}+14355 q^{10}+19855 q^9+24057 q^8+25542 q^7+23562 q^6+18480 q^5+11880 q^4+5940 q^3+2145 q^2+495 q+55$ \\
	$(11,3)$ & $q^{18}+55 q^{17}+539 q^{16}+2959 q^{15}+11275 q^{14}+32692 q^{13}+75735 q^{12}+143913 q^{11}+226908 q^{10}+297990 q^9+326997 q^8+301620 q^7+235158 q^6+154308 q^5+83160 q^4+34980 q^3+10560 q^2+1980 q+165$ \\
	$(11,4)$ & $q^{18}+77 q^{17}+1342 q^{16}+10109 q^{15}+46849 q^{14}+153527 q^{13}+380402 q^{12}+738067 q^{11}+1143780 q^{10}+1435005 q^9+1475562 q^8+1259412 q^7+901362 q^6+540540 q^5+265650 q^4+101640 q^3+27720 q^2+4620 q+330$ \\
	$(11,5)$ & $q^{18}+88 q^{17}+1782 q^{16}+16522 q^{15}+88924 q^{14}+318197 q^{13}+820512 q^{12}+1602986 q^{11}+2450437 q^{10}+2996972 q^9+2984520 q^8+2457840 q^7+1693230 q^6+975744 q^5+460152 q^4+168630 q^3+43890 q^2+6930 q+462$ \\
	\midrule
	$(12,2)$ & $q^{20}+12 q^{19}+78 q^{18}+352 q^{17}+1221 q^{16}+3432 q^{15}+8074 q^{14}+16236 q^{13}+28314 q^{12}+43252 q^{11}+58278 q^{10}+69564 q^9+73656 q^8+68904 q^7+56232 q^6+39072 q^5+22275 q^4+9900 q^3+3190 q^2+660 q+66$ \\
	$(12,3)$ & $q^{20}+66 q^{19}+714 q^{18}+4300 q^{17}+17985 q^{16}+57420 q^{15}+147202 q^{14}+312378 q^{13}+558855 q^{12}+850575 q^{11}+1104048 q^{10}+1221682 q^9+1152888 q^8+929610 q^7+640200 q^6+372636 q^5+178200 q^4+66495 q^3+17820 q^2+2970 q+220$ \\
	$(12,4)$ & $q^{20}+96 q^{19}+1950 q^{18}+16532 q^{17}+85875 q^{16}+316728 q^{15}+892496 q^{14}+2000988 q^{13}+3651945 q^{12}+5494500 q^{11}+6864858 q^{10}+7162452 q^9+6278283 q^8+4653000 q^7+2924460 q^6+1546776 q^5+670230 q^4+225720 q^3+54120 q^2+7920 q+495$ \\
	$(12,5)$ & $q^{20}+114 q^{19}+2808 q^{18}+30694 q^{17}+192327 q^{16}+805833 q^{15}+2456976 q^{14}+5730414 q^{13}+10553631 q^{12}+15676199 q^{11}+19057434 q^{10}+19172604 q^9+16115616 q^8+11412324 q^7+6835752 q^6+3438204 q^5+1413720 q^4+450450 q^3+101640 q^2+13860 q+792$ \\
	$(12,6)$ & $q^{20}+120 q^{19}+3120 q^{18}+36312 q^{17}+245007 q^{16}+1084332 q^{15}+3412852 q^{14}+8072064 q^{13}+14897368 q^{12}+22010536 q^{11}+26499066 q^{10}+26335804 q^9+21837585 q^8+15242040 q^7+8992236 q^6+4451832 q^5+1800414 q^4+563640 q^3+124740 q^2+16632 q+924$ \\
	\bottomrule
\end{longtable}

\section{Algebraic Relations}
\label{sec:alg-rel}

The rank-generating functions for contracted Grassmannian trees and forests presented in \cref{thm:GF-trees} satisfy the following algebraic relations:
\begin{align}
	\label{eq:GF-tree-rel}
	\begin{autobreak}
		0 
		= 
		q (q
		+1) \mathcal{G}_\text{tree}^5 
		+q x (q^2 x y
		-5 q (y
		+1)
		-5 (y
		+1)) \mathcal{G}_\text{tree}^4
		+ x^2 (q (-3 q^2 x y (y
		+1)
		+q (10
		-y ((x
		-10) y
		+x
		-19))
		+y (10 y
		+21)
		+10)
		+y)\mathcal{G}_\text{tree}^3
		+ x^3 (q^3 x y (y (3 y
		+7)
		+3)
		+q^2 (y
		+1) (y (2 (x
		-5) y
		+2 x
		-17)
		-10)
		-q (y (y (-x
		+10 y
		+32)
		+32)
		+10)
		-3 y (y
		+1))\mathcal{G}_\text{tree}^2
		- x^4 (q^3 x y (y
		+1) (y (y
		+4)
		+1)
		+q^2 (y
		+1) (y ((x
		-5) y^2
		+3 (x
		-4) y
		+x
		-12)
		-5)
		-q (y
		+1) (y (y (-x
		+5 y
		+16)
		+16)
		+5)
		-y (y (3 y
		+5)
		+3)) \mathcal{G}_\text{tree}
		+x^5 (q^3 x y^2 (y
		+1)^2
		-q^2 (y
		+1)^2 (y (y (-x
		+y
		+2)
		+2)
		+1)
		-q (y (y (y (-x
		+y (y
		+5)
		+10)
		+10)
		+5)
		+1)
		-y (y
		+1) (y^2
		+y
		+1))\,,
	\end{autobreak}\\
	\label{eq:GF-forest-rel}
	\begin{autobreak}
		0
		=
		q (x
		-1) x^2 y (x y
		-1) (q (x y
		+x
		-1)
		+x) (q (x y
		+x
		-1)
		+x y)\mathcal{G}_\text{forest}^6
		+ (q^3 x^2 y (x y
		+x
		-1) (x (x y (y
		+4)
		+x
		-5 (y
		+1))
		+4)
		+q^2 (x (x (-(x^3 (y
		+1) (y (y
		+2) (y^2
		+1)
		+1))
		+x^2 (y
		+1)^2 (y (5 y
		+3)
		+5)
		-2 x (y
		+1) (y (5 y
		+7)
		+5)
		+y (10 y
		+19)
		+10)
		-5 (y
		+1))
		+1)
		-q (x^2 (y^2
		+y
		+1)
		-2 x (y
		+1)
		+1) (x (x (x (y
		+1) (y (y
		+3)
		+1)
		-y (3 y
		+8)
		-3)
		+3 (y
		+1))
		-1)
		-x^2 y (x y
		+x
		-1) (x^2 (y^2
		+y
		+1)
		-2 x (y
		+1)
		+1))\mathcal{G}_\text{forest}^5
		+ (q^3 x^2 y (x^2 (y (3 y
		+7)
		+3)
		-9 x (y
		+1)
		+6)
		-q^2 (x (x (5 x^2 (y
		+1)^2 (y^2
		+y
		+1)
		-x (y
		+1) (y (20 y
		+31)
		+20)
		+30 y^2
		+57 y
		+30)
		-20 (y
		+1))
		+5)
		-q (x (x (x^2 (y (y (y (5 y
		+21)
		+31)
		+21)
		+5)
		-4 x (y
		+1) (y (5 y
		+11)
		+5)
		+30 y^2
		+63 y
		+30)
		-20 (y
		+1))
		+5)
		-x^2 y (x^2 (y (3 y
		+5)
		+3)
		-6 x (y
		+1)
		+3))\mathcal{G}_\text{forest}^4
		+(-10 q (q
		+1) x^3 y^3
		+(q
		+1) x^2 y^2 ((q (3 q
		-29)
		-3) x
		+30 q)
		+x y ((q
		+1) (q (3 q
		-29)
		-3) x^2
		+(q (q (57
		-4 q)
		+63)
		+3) x
		-30 q (q
		+1))
		-10 q (q
		+1) (x
		-1)^3)\mathcal{G}_\text{forest}^3
		+ (q
		+1) (q^2 x^2 y
		-10 q (x y
		+x
		-1)^2
		-x^2 y)\mathcal{G}_\text{forest}^2
		-5 q (q
		+1) (x y
		+x
		-1)\mathcal{G}_\text{forest}
		-q (q
		+1)\,.
	\end{autobreak}
\end{align}



\section*{Acknowledgements}

The authors are grateful to  Francois Bergeron, Mireille Bousquet-M\'elou, 
Mathilde Bouvel, Colin Defant, Ira Gessel, Tomasz {\L}ukowski,  Alejandro Morales, 
Richard Stanley, and Einar Steingrimsson for providing useful comments and references. L.W.\ would like to acknowledge the support of the National Science Foundation under agreements No.\ DMS-1854316 and No.\ DMS-1854512. Any opinions, findings and conclusions or recommendations expressed in this material are those of the authors and do not necessarily reflect the views of the National Science Foundation.


%
%

\bibliographystyle{alphaurl}
\bibliography{bibliography}

\end{document}